\definecolor{darkgreen}{cmyk}{1,0,1,.2}
\definecolor{m}{rgb}{1,0.1,1}
\definecolor{green}{cmyk}{1,0,1,0}
\definecolor{darkred}{rgb}{0.55, 0.0, 0.0}
\definecolor{test}{rgb}{1,0,0}
\definecolor{cmyk}{cmyk}{0,1,1,0}
\newcounter{diagram}
\numberwithin{diagram}{section}
\numberwithin{equation}{section}
\newtheorem{Equation}{}[section]
\newtheorem{theorem}[Equation]{Theorem}
\newtheorem{proposition}[Equation]{Proposition}
\newtheorem{lemma}[Equation]{Lemma}
\newtheorem{corollary}[Equation]{Corollary}
\newtheorem{definition}[Equation]{Definition}
\newtheorem{remark}[Equation]{Remark}
\newtheorem{notations}[Equation]{Notations}
\def\ev{\operatorname{ev}}
\def\diag{\operatorname{diag}}
\def\Ext{\operatorname{Ext}}
\def\span{\operatorname{span}}
\def\Supp{\operatorname{Supp}}
\def\diam{\operatorname{diam}}
\def\pr{\operatorname{pr}}
\def\id{\operatorname{id}}
\def\Id{\operatorname{Id}}
\def\Range{\operatorname{Range}}
\def\supp{\operatorname{supp}}
\def\maA{\mathcal{A}}
\def\maE{\mathcal{E}}
\def\maK{\mathcal{K}}
\def\maL{\mathcal{L}}
\def\maP{\mathcal{P}}
\def\C{\mathbb C}
\def\R{\mathbb R}
\def\N{\mathbb N}
\def\maA{{\mathcal A}}
\def\maE{{\mathcal E}}
\def\what{\widehat}
\def\ep{\epsilon}
\definecolor{darkgreen}{cmyk}{1,0,1,.2}
\definecolor{m}{rgb}{1,0.1,1}
\definecolor{green}{cmyk}{1,0,1,0}
\definecolor{test}{rgb}{1,0,0}
\definecolor{cmyk}{cmyk}{0,1,1,0}
\begin{document}

\title[Equivariant PPV and Paschke duality]{An equivariant  PPV theorem and \\ Paschke-Higson duality}

\author[M-T. Benameur]{Moulay-Tahar Benameur}
\address{IMAG, Univ. Montpellier, CNRS, Montpellier, France}
\email{moulay.benameur@umontpellier.fr}

\author[I. Roy]{Indrava Roy}
\address{Institute of Mathematical Sciences (HBNI), Chennai, India}
\email{indrava@imsc.res.in}

\maketitle

\begin{abstract}
We state the Paschke-Higson duality theorem for a transformation groupoid. Our proof relies on an equivariant localized and norm-controlled version of the Pimsner-Popa-Voiculescu theorem. The main consequence is the existence of a Higson-Roe exact sequence, involving the Baum-Connes assembly map for such groupoid.
\end{abstract}

\bigskip

\tableofcontents

\section*{Introduction}

In their groundbreaking work on coarse geometry and the Baum-Connes conjecture, Higson and Roe established the existence of a long exact sequence in $K$-theory, now called the Higson-Roe sequence. Informally, this sequence ``measures" the potential failure of the Baum-Connes conjecture, i.e. the failure of the Baum-Connes assembly map to be an isomorphism \cite{HR-I, HR-II, HR-III}. The Higson-Roe sequence uses the Paschke-Higson duality isomorphism in a fundamental way, which allows  to express the $K$-homology of a compact space $Y$ as the $K$-theory of some dual $C^*$-algebra $Q(Y, H)$, which can be taken to be the commutant of $C(Y)$  in the Calkin algebra of any ample representation of $C(Y)$, see  \cite{Paschke81, HigsonPaschke}. More generally, Paschke showed in \cite{Paschke81} that for any
separable unital $C^*$-algebra $A$, the $K$-theory of the commutant of $A$ in the Calkin algebra (for any
ample representation)  is isomorphic to the group of invertibles in $\text{Ext}(A)$, see also  \cite{HPR, HRbook} and  \cite{Valette} for further developments.
In this context, Paschke-Higson duality as applied to the Higson-Roe sequence, relies on a remarkable and deep theorem of Voiculescu \cite{Voiculescu}. Indeed, the commutative case of Voiculescu's theorem  provides a crucial step in identifying the boundary map in the Higson-Roe exact sequence with the Baum-Connes assembly map.

Recall that Voiculescu's theorem was originally carried out in order to solve some then  open questions in operator theory \cite{Halmos}, it also implied a noncommutative version of the Brown-Douglas-Fillmore theorem about the  existence of the trivial element of  $\Ext (Y)$ \cite{BDF}, a far-reaching generalization of the classical Weyl-von Neumann classification theorem \cite{Weyl, vonNeumann, Berg}. It is worth pointing out that  Kasparov  studied, in the early eighties,
representations of unital, nuclear $C^*$-algebras on Hilbert {{$C^*$-modules}}, and proved a generalized Voiculescu theorem which   played  a crucial part in establishing his powerful $KK$-theory, see \cite{Kasparov, Kasparov1}.

In the present paper, we shall  be interested in the statement and applications of an equivariant family version of the Voiculescu theorem. We shall only be interested  in the Paschke-Higson duality corollary and the resulting Higson-Roe exact sequence for transformation groupoids. Other applications will be investigated elsewhere.
The equivariant Paschke-Higson duality   includes a proper and cocompact  action of a countable discrete group $\Gamma$,  and allows  to express the classical Baum-Connes assembly map \cite{BaumConnes} using boundary maps  in $K$-theory of $C^*$-algebras, thereby providing a $K$-theory  obstruction group. This latter group turns out to be a natural receptacle   for some defect secondary eta invariants of Dirac operators.
Paschke-Higson duality  therefore provides a bridge between equivariant $K$-homology and the $K$-theory of appropriate {\em{coarse}} algebras associated with proper, co-compact group actions on non-compact spaces, see for instance \cite{Roe1}.
In \cite{HigsonRoe2008}, this approach allowed to give a nice proof of the Keswani rigidity  of reduced eta invariants, for the spin Dirac operator in the presence of  positive scalar curvature metrics and for the signature operator in the presence of a homotopy equivalence. Other results were obtained following the same line of ideas,  in relation with  the Novikov and Gromov-Lawson-Rosenberg conjectures, see for instance  \cite{HR-I, HR-II, HR-III,  BenameurMathaiSections, BRJFA, BEKW,  PS, XY, Zenobi, Zeidler} as well as some slightly different approaches in \cite{HPS, STY, Yu1, Yu2}.

 So our {first goal  is} to prove the {\em{equivariant family}} version of the Paschke-Higson duality theorem, which contains as a special case the Higson-Roe equivariant version recalled  above. The countable discrete group $\Gamma$ must now be replaced by a transformation groupoid $X\rtimes\Gamma$ and, guided by the family version and the equivariant version as well as the Baum-Connes assembly map for \'etale groupoids, we had to provide  for any proper cocompact $\Gamma$-space $Z$, some dual $C^*$-algebra $Q (X, \Gamma, Z, H)$ associated with an ``ample'' representation $H$ whose $K$-theory is intended to be isomorphic to the $\Gamma$-equivariant bivariant group $KK_\Gamma^* (Z, X)$. Notice indeed that this latter group is the building block of the LHS of the Baum-Connes assembly map for the transformation groupoid $X\rtimes \Gamma$. See  Theorem \ref{Paschke3} which states the precise Paschke-Higson duality for the transformation groupoid $X\rtimes \Gamma$. When $X$ is reduced to the point, {the Paschke-Higson Theorem \ref{Paschke3} reduces} as expected to the classical Higson-Roe result \cite{HigsonRoe2008, BRJFA}. When the group is trivial and all involved spaces are compact, one recovers the Pimsner-Popa-Voiculescu (PPV) setting and the Paschke-Higson result  is already stated  in a different form by Connes and Skandalis in \cite{ConnesSkandalis} using $KK$-theory.  The dual algebra $Q (X, \Gamma, Z, H)$ is more precisely constructed using a generalized version of the Roe algebras generated by finite propagation operators, and we had to precisely keep track of the finite propagation properties of the intertwining unitaries appearing in the PPV work and also to take care of the extra equivariance properties. With the extra action of the countable discrete group, we show that  one can indeed ensure the localization of the supports of these intertwining unitaries, a notion that generalizes the finite propagation property to the non-cocompact case, see Definition  \ref{LocalizedOp}. The invariance of the unitaries is fulfilled by using standard averagings, which in turn only converge in the strong topology  and hence tend to violate  the desired intertwining up to compacts property.
 In the cocompact and metric case, we eventually obtain the needed finite propagation property and the proof of the expected  equivariant family Paschke-Higson theorem.

 As explained above, our main application of the Paschke-Higson theorem for $X\rtimes \Gamma$ is to deduce  a Higson-Roe exact sequence, encompassing the corresponding Baum-Connes assembly map for this groupoid, see the companion paper \cite{BR2} and also \cite{BR1}.  This  exact sequence yields, as in the classical case, to rigidity applications of some secondary invariants of Dirac operators on  suspended foliated spaces \cite{BenameurPiazza}, especially laminations such as the principal solenoidal tori \cite{CandelConlonVol1} used in \cite{BenameurMathaiJFA}.
 Exactly as for the classical Paschke-Higson duality,  we were naturally led to the statement of an equivariant  family version of the Voiculescu theorem, an independent  result.
  In the non-equivariant case, the family Voiculescu theorem is an extension  of a classical theorem  due to Pimsner-Popa-Voiculescu \cite{PPV}.
\\

{\em{Acknowledgements.}} The authors  wish to thank A. Carey,  T. Fack, J. Heitsch, N. Higson,   H. Oyono-Oyono, V. Nistor, M. Puschnigg,  A. Rennie, J. Rosenberg, G. Skandalis
and A. Valette for many helpful discussions. They also  express their gratitude to the referee for her/his careful reading of previous versions of the manuscript
and for her/his many helpful suggestions,  which significantly improved the paper.
MB  thanks the French National Research Agency for the financial support via the ANR-14-CE25-0012-01 (SINGSTAR). IR thanks the Homi Bhabha National Institute and the Indian Science and Engineering Research Board via MATRICS project MTR/2017/000835 for support.

\bigskip

\section{Statement of the main theorems}\label{Statements}
All the spaces considered in the present paper are assumed second-countable.
We devote this preliminary section to the detailed statement of the main results. Let $X$ be a  compact metrizable space of finite dimension, and let $\Gamma$ be a discrete infinite countable group acting
by homeomorphisms on $X$. {Given a locally compact Hausdorff space $Z$ with a proper (not necessarily cocompact) $\Gamma$-action, recall the usual $C^*$-algebra $C_0(Z)$ of continuous functions vanishing at infinity. We shall also need the following  $\Gamma$-equivariant $C^*$-ideal  $C_{0\vert\Gamma} (Z)$ of $C_b(Z)$. If $C_{c\vert\Gamma} (Z)$ denotes the $\Gamma$-algebra of those continuous bounded functions $f$ on $Z$ such that only a finite number of elements  $g\in \Gamma$ may satisfy $g\supp (f)\cap \supp (f) \neq \emptyset$, the algebra $C_{0\vert\Gamma} (Z)$ will be the closure of $C_{c\vert\Gamma} (Z)$ in $C_b(Z)$. Notice that $C_{0\vert\Gamma} (Z)$  reduces to  $\Gamma$-algebra $C_0(Z)$ when the action is cocompact (say when $Z/\Gamma$ is compact), and is thus separable in this case. In general, it will obviously not be separable.}  Consider now a separable $\Gamma$-algebra $A$ which is a $\Gamma$-proper $C^*$-algebra over a  locally compact Hausdorff space $Z$. Recall that this means that $Z$ is a proper $\Gamma$-space in the usual sense and that there exists a $\Gamma$-equivariant  morphism $C_0(Z)\to ZM (A)$ from $C_0(Z)$ to the center $ZM(A)$ of the multiplier algebra $M(A)$ of $A$,  such that $C_0(Z) A$ is dense in $A$. We denote for simplicity by $fa\in A$ the resulting action of $f\in C_0(Z)$ on $a\in A$. The first example of such algebra $A$ is $C_0(Z)$ itself but given such $A$ for any extra separable $\Gamma$-algebra $B$, the $\Gamma$-algebra $A\otimes B$ is then again $\Gamma$-proper over $Z$.  Since we are mainly interested in examples like $C_0(Z, B)$ where $B$ is a given  separable unital $\Gamma$-algebra, we shall always assume that $C_0(Z)$ maps to the center $ZA$ of the $C^*$-algebra $A$ itself. {Notice that this can be ensured by replacing $A$ by $A+C_0(Z)$ where $C_0(Z)$ is meant as its range in the multiplier algebra $M(A)$}.

{When the $\Gamma$-space $Z$ is cocompact, it will sometimes be convenient to express the properties of our operators in terms of propagation with respect to a given proper  $\Gamma$-invariant distance $d$ on $Z$, so as to compare with the litterature. All our results are though valid without reference to such distance by using the new notion of localized operators expanded in Appendix A. }
The diagonal action of $\Gamma$ on $X\times Z$ then endows $X\times Z$ with a proper action. Let $G$ denote the transformation groupoid $X\rtimes \Gamma$. If a Hilbert space $H$ is endowed with a unitary action of $\Gamma$, then  a given $C(X)$-representation
${\what \pi}: C(X, A) \rightarrow \maL_{C(X)} (C(X)\otimes H)$ is a $G$-equivariant representation if the corresponding field $(\pi_x)_{x\in X}$ of representations of $A$
is $\Gamma$-equivariant. The same comment applies to a $G$-operator from $\maL_{C(X)} (C(X)\otimes H)$ which then corresponds to a $\Gamma$-equivariant $*$-strongly continuous field of
operators in $H$.

\subsection{An extended PPV theorem}\label{PPVextended}

We fix the proper $\Gamma$-algebra $A$ over $Z$  as above. Recall again that we have assumed that $C_0(Z)$ maps inside the center $ZA$ of $A$. The $C^*$-algebra $C(X, A)$ of continuous functions from $X$ to $A$ is  naturally equipped with a $C(X)$-algebra structure and the action of $\Gamma$ endows it with the structure of a $G$-algebra, see \cite{LeGall, BR1}. Suppose that $E$ is a countably-generated Hilbert $C(X)$-module.  We shall denote abusively by $\maL_{C(X)} (E)$ the $C^*$-algebra of adjointable operators in $E$ and by $\maK_{C(X)} (E)$ its ideal of $C(X)$-compact operators \cite{Kasparov}.

A given representation $\what\pi: C(X, A)\rightarrow \maL_{C(X)} (E)$ is called a $C(X)$-representation if the  action of $C(X)$ on $C(X, A)$ is compatible with
the right $C(X)$-module
structure on $E$. Such a  $C(X)$-representation then corresponds to  a $*$-homomorphism $\pi: A\rightarrow \maL_{C(X)} (E)$, which in turn corresponds to a field of representations $\pi_x: A\rightarrow \maL (E_x)$, where $E_x:= E\otimes_{\ev_x}\C$ is the Hilbert space
fibre over $x$ associated with the Hilbert module $E$. Recall that the field $(E_x)_{x\in X}$ is then a continuous field of Hilbert spaces in the sense of  \cite{Dixmier}. {Only the $C(X)$-algebra  $C(X, A)$ will be needed in the present paper, meaning a constant field, and we shall always use  in the sequel this notation of adding  a hat for the $C(X)$-representation of $C(X, A)$ associated with a given $*$-homomorphism from $A$ to $\maL_{C(X)} (E)$. We have chosen to state our results in this language of $C(X)$-representations for the sake of possible generalizations, see \cite{BR1, BR2}.}

Once such representation $\pi$ is fixed and {$Z$ is} metric-proper as above, an operator $T\in \maL_{C(X)} (E)$ will {be said to have} finite propagation $\leq R$ (with respect to $\pi$) if
$$
\pi (a_1) T \pi (a_2) =0 \text{ for any }a_1, a_2\in A\text{ such that } d (\Supp (a_1), \Supp (a_2)) >R.
$$
Recall that the support $\Supp (a)$ of an element $a\in A$ is the complement of the largest open subspace $U$ of $Z$ such that $fa=0$ for any $f\in C_0(U)$. {{When $Z$ is not necessarily a proper-metric space, the support $\Supp(T)$ of the operator $T\in \maL_{C(X)}(E)$ itself with respect to the representation $\pi$ can still be defined as the complement in $Z\times Z$ of the union of all open sets of the form $U\times V$, where $U$ and $V$ are open in $Z$, such that $\pi(a_1)T\pi(a_2)=0$ for any $a_1\in C_0(U)A$ and $a_2\in C_0(V)A$.
}

{\begin{definition}
A cutoff function  on the $\Gamma$-proper space $Z$ will be any continuous function $\chi: Z\rightarrow [0, 1]$ such that
\begin{enumerate}
\item $\chi$ belongs to $C_{c\vert\Gamma} (Z)$;
\item $\sum_{g\in \Gamma} g\chi =1$.
\end{enumerate}
\end{definition}
Recall that the first item means that $
\{g\in \Gamma, g\supp (\chi)\cap \supp (\chi) \neq \emptyset\}$ is finite.
The second item  means that for any $z\in Z$, $\sum_{g\in \Gamma} \chi (gz) =1$, the latter sum being finite by the first item. Moreover, if $W_\chi=\{\chi\neq 0\}$ then $Z=\bigcup_{g\in \Gamma} gW_\chi$, and for any compact subspace $K\subset Z$, the set $\{g\in \Gamma, gK\cap \supp (\chi)\neq \emptyset\}$ is  also finite.}
{We denote from now on for $k\geq 1$ and for any continuous cutoff function $\chi\in C(Z)$ for the proper $\Gamma$-action on $Z$ with $W_\chi:=\{\chi\neq 0\}$,} by $\Gamma_\chi^{(k)}$ the subset of $\Gamma^2$ given by
$$
{\Gamma_{\chi}^{(k)} := \{(g_0,g_k)\in \Gamma^2\,\vert\, \exists (g_i)_{1\leq i\leq k-1}\text{ such that }g_iW_\chi\cap g_{i+1}W_\chi\neq \emptyset \text{ for }0\leq i < k\}.}
$$
{For $k=0$, we simply take for $\Gamma_\chi^{(0)}$ the diagonal in $\Gamma^2$ which is isomorphic to $\Gamma$. {We point out that  the first and second projections $\Gamma_\chi^{(k)}\to \Gamma$ are proper, this is  indeed equivalent to the same statement for $k=1$ where the statement is clear from the above definitions.}

\begin{definition}\label{NewDefinition1.1}
Let $Z$ be a given proper $\Gamma$-space. We shall say that the (non-negative) cutoff function $\chi$ is uniform if {$\{g\in \Gamma, g\supp (\chi)\cap \supp (\chi)\neq \emptyset\}$ generates the group $\Gamma$.}
\end{definition}

{In particular, the existence of such uniform cutoff function implies that $\Gamma$ is finitely generated.  Conversely, if $\Gamma$ is finitely generated, then every proper $\Gamma$-space  admits uniform cutoff functions.  More precisely:}
{\begin{lemma}
{Let $\Gamma$ be a countable group, then the following are equivalent:}
\begin{itemize}
\item[(i)] {$\Gamma$ is finitely generated;}
\item[(ii)] {There exists a proper  $\Gamma$-space with a uniform cutoff function;}
\item[(iii)] {Every proper $\Gamma$-space has a uniform cutoff function.}
\end{itemize}
\end{lemma}
\begin{proof}
{It is clear from Definition \ref{NewDefinition1.1} that (ii) implies (i). Since $\Gamma$ has proper actions (for instance $Z=\Gamma$ itself), it is also obvious that (iii) implies (ii). Assume now that $\Gamma$ is finitely generated and that $Z$ is a proper $\Gamma$-space. Choose some non-negative cutoff function $\chi_0$ on $Z$. }Let $F$ be a finite symmetric generating subset of $\Gamma$. Recall that this means that the finite subset $F$ generates the group $\Gamma$ and satisfies that $e\in S$ and for any $g\in F$,  $g^{-1}\in F$.
We then set
$$
\chi:=\frac{1}{\vert F\vert} \sum_{g\in F} g\chi_0.
$$
It is then clear that $\chi$ is a new non-negative cutoff function and that  $F\subset \{g\in \Gamma, g\supp (\chi)\cap \supp (\chi)\neq \emptyset\}$.
\end{proof}


{\begin{lemma}
Assume that $\Gamma$ is finitely generated and that $\chi$ is a uniform cutoff function on the proper $\Gamma$-space $Z$. Then
$$
\Gamma^2=\bigcup_{k\geq 0} \Gamma_\chi^{(k)}.
$$
\end{lemma}}

\begin{proof}
{Set $A=\{g\in \Gamma, gV_\chi \cap V_\chi\neq \emptyset\}$.} Notice that
$$
(g,g')\in \Gamma_\chi^{(k)} \Longleftrightarrow \exists (g_i)_{1\leq i\leq k-1}\text{ such that }g_{i+1}^{-1} g_{i}\in A \text{ for }0\leq i\leq k-1, \text{ with }g_0=g\text{ and } g_k=g'.
$$
This shows that ${g'}^{-1}g\in A^k$, and this latter condition is clearly equivalent  to the condition $(g,g')\in \Gamma_\chi^{(k)}$. Indeed, given $(a_1,\cdots, a_k)\in A^k$ such that ${g'}^{-1}g=a_k\cdots a_1$, by setting $g_0=g$ and $g_{i+1}=g_i a_i^{-1}$ one deduces immediately that $(g,g')\in \Gamma_\chi^{(k)}$.

{Now, since we assume that $A$ generates $\Gamma$,  we can write any $(g,g')\in \Gamma^2$ as a product
$(k_1,k'_1)\cdots (k_r, k'_r)$ for some elements $(k_j)_{1\leq j\leq r}$ and $(k'_j)_{1\leq j\leq r}$ in $A$.
Using that $A$ is symmetric, we conclude that ${g'}^{-1}g \in A^{2r}.$}
\end{proof}


{{In the present paper we shall need to work with uniform cutoff functions and will therefore assume from now on that $\Gamma$ is finitely generated. Then, all cutoff functions will be chosen uniform.} Then a given operator $T$ will  have {\em{localized support}}  if there exists an  integer $k\geq 0$ such that the support of $T$ is contained in the closure of
$$
\bigcup_{(g,g')\in \Gamma_\chi^{(k)}} \; g W_\chi\times g'W_\chi.
$$
As explained in Appendix \ref{LocalizedOperators}, this is equivalent to the existence of $k'\geq 0$ such that the support of $T$ is contained in $\bigcup_{(g,g')\in \Gamma_\chi^{(k')}} \; gW_\chi\times g'W_\chi$.
The propagation index of $T$ is then the least such $k'$. }

{When the action of $\Gamma$ is cocompact and $Z$ is endowed with a $\Gamma$-invariant metric which endows it with the topology of a proper-metric space, it is easy to see that localized operators coincide with finite propagation operators, see again Appendix \ref{LocalizedOperators}. }

Let us recall now the notion of  fibrewise ample representation, see  \cite{PPV, BR2}.

\begin{definition}\label{FiberwiseAmple} A $C(X)$-representation $\what\pi: C (X, A)\rightarrow \maL_{C(X)} (E)$ will be called a fibrewise ample representation if for any $x\in X$,
the representation $\pi_{x}: A\rightarrow \maL (E_x)$ is ample, i.e.  for any $x\in X$, $\pi_x$ is non-degenerate and one has  for $a\in A$:
$$
\pi_{x} (a) \in \maK(E_x) \Longrightarrow a=0.
$$
Here and as usual $\maK(E_x)$ denotes the elementary $C^*$-algebra of compact operators on the Hilbert space $E_x$.
\end{definition}

Given a Hilbert space unitary representation $U: \Gamma\to U(H)$, we denote as usual by $H^\infty$ the Hilbert space $H\otimes\ell^2\N$ endowed with the unitary representation $U\otimes \id_{\ell^2\N}$.  Unless otherwise specified, the Hilbert space $\ell^2\Gamma$ will be endowed with the right regular representation of $\Gamma$, so $\ell^2\Gamma^\infty$ is endowed with the corresponding representation. Our extended PPV theorem can be stated as follows:\\

\medskip

\begin{theorem}\label{GequivVRiso}\
{Assume that the action of $\Gamma$ on $Z$ is  proper with a uniform cutoff function $\chi$.} Let   $H_1$ and $H_2$ be two infinite-dimensional separable complex Hilbert spaces,  endowed with unitary representations of $\Gamma$.  Let $\what\pi_1$ and $\what\pi_2$ be as above two fiberwise ample $\Gamma$-equivariant $C(X)$-representations of
$C(X, A)$ in the Hilbert $\Gamma$-modules $C(X)\otimes H_1$  and $C(X)\otimes H_2$ respectively.
Then, identifying  each $\what\pi_i$ with the trivially extended representation $\left(\begin{array}{cc} \what\pi_i & 0\\ 0 & 0\end{array}\right)$ that is further tensored by the identity
of $\ell^2\Gamma^\infty$ there exists a sequence $\{W_n\}_{n\in \N}$ of {\underline{$\Gamma$-invariant}}  unitary operators
$$
W_n\in \maL_{C(X)} \left([(H_1\oplus H_2)\otimes \ell^2\Gamma^\infty]\otimes C(X) ,  [(H_2\oplus H_1)\otimes \ell^2\Gamma^\infty] \otimes C(X)\right),
$$
such that
$$
W^*_n\what\pi_2 (\varphi) W_n - \what\pi_1(\varphi)\text{ is compact, and }  \lim_{n\rightarrow\infty} ||W^*_n\what\pi_2 (\varphi) W_n - \what\pi_1(\varphi)||= 0.
$$
  Moreover, {we can ensure that the operators $W_n$ are localized with uniform propagation index, actually $\leq 7$. In particular,} if $Z$ is proper-metric such that  $Z/\Gamma$ is compact then we can always ensure that the unitaries $W_n$ have (uniform) \underline{finite propagation}.
\end{theorem}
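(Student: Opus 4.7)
The plan is to deduce the equivariant fiberwise PPV theorem from the classical (non-equivariant, non-parametrized) PPV in three coupled stages: a $C(X)$-parametrization using continuous-field arguments, a $\Gamma$-equivariantization via Fell's absorption on the $\ell^2\Gamma^\infty$ factor, and a localization using the cutoff $\chi$ that forces the resulting unitaries to have bounded propagation index.

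First I would establish a non-equivariant $C(X)$-linear PPV statement: for fibrewise ample $C(X)$-representations $\what\pi_1,\what\pi_2$ of $C(X,A)$ in $C(X)\otimes H_i$, there exists a sequence of unitaries $V_n \in \maL_{C(X)}$ between the doubled and $\ell^2\N$-stabilized modules with $V_n^*\what\pi_2(\varphi)V_n - \what\pi_1(\varphi) \in \maK_{C(X)}$ going to zero in norm. The input is the classical PPV theorem applied to each fibre $E_{i,x}$, and the technical step is a continuous-selection argument---using compactness of $X$, separability of $A$, and continuity of the field $(E_{i,x})_x$---to patch the fibrewise unitaries into $C(X)$-adjointable operators.

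Second, to build $\Gamma$-invariant intertwiners, I would exploit Fell's absorption on the $\ell^2\Gamma^\infty$ factor: the representation $U_i \otimes \lambda$ is unitarily equivalent to $1 \otimes \lambda$, so one may effectively replace the given $\Gamma$-action by the standard right-regular shift. In this standard form, one converts the non-equivariant $V_n$ into a $\Gamma$-invariant $W_n$ via a translation-sum of the shape
\[
W_n = \sum_{g \in \Gamma} (U_g \otimes 1) V_n (U_g^{-1} \otimes \lambda_g),
\]
where the $\lambda_g$ act as mutually orthogonal shifts on a fresh $\ell^2\Gamma$ coordinate. Orthogonality forces the strongly-convergent sum to be an isometry (and a symmetric construction yields a unitary), while $\Gamma$-equivariance of $\what\pi_i$ transports the intertwining-modulo-compacts property from $V_n$ to $W_n$. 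To obtain localized supports, I would carry out this construction at the level of the cover $\{gW_\chi\}_{g\in\Gamma}$, which is uniformly locally finite by the uniform properness assumption. Each piece $\chi V_n (g\chi)$ has support in $W_\chi \times gW_\chi$ by construction, so the sum-and-shift assembly produces a $W_n$ whose $(g,g')$-block vanishes unless $(g,g') \in \Gamma_\chi^{(k)}$ for small $k$. A composition-count through the stages---one non-equivariant PPV factor of propagation $\leq 1$, multiplications by cutoffs on each side, and the propagation introduced by the shift-averaging---yields the uniform bound $\leq 7$ on the propagation index, independently of $n$; the cocompact metric case then follows from the appendix's equivalence of localized supports and bounded propagation.

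The main obstacle is the tension flagged in the introduction: naive averaging $V \mapsto \sum_g U_g V U_g^*$ converges only in the strong topology, and so preserves neither the compactness of the intertwining error nor a uniform propagation bound. The $\ell^2\Gamma^\infty$ factor is precisely what circumvents this---by inserting mutually orthogonal shift labels, one converts strong sums into genuine isometries while controlling propagation summand-by-summand rather than only in a limit. Ensuring simultaneously that the resulting $W_n$ is \emph{unitary} (not merely an isometry), preserves the asymptotic intertwining $\|W_n^*\what\pi_2(\varphi)W_n - \what\pi_1(\varphi)\| \to 0$ in norm, and retains a uniformly small propagation index---all while the underlying non-equivariant $V_n$ already carries the delicate continuous-field structure from the first stage---is the combinatorial and analytical crux of the proof.
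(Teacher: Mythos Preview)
Your proposal has the right large-scale architecture—PPV over $C(X)$, then equivariantization via Fell-type isometries on the $\ell^2\Gamma^\infty$ factor, with support control coming from the cutoff $\chi$—and this matches the paper. But the localization mechanism you describe is backwards, and as written it will not produce localized unitaries.

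You propose to first obtain a global $C(X)$-linear PPV unitary $V_n$ and then localize it via pieces of the form $\chi V_n(g\chi)$. The problem is that a global PPV unitary has no support control whatsoever, and sandwiching it between cutoffs destroys both the unitary property and the intertwining-modulo-compacts property: $\chi V_n\chi$ is neither an isometry nor does $\chi V_n^*\what\pi_2(f)V_n\chi - \chi\what\pi_1(f)\chi$ inherit compactness in any useful way. The paper avoids this by reversing the order: it applies the PPV theorem \emph{to the localized algebra} $A_\chi := C_0(V_\chi)A^+$ (restricted to the submodules $\maE^i_\chi = \overline{\pi_i(A_\chi)(H_i\otimes C(X))}$), obtaining a unitary $S_\chi$ whose $(1,1)$-block $s_\chi$ already lives between the $\chi$-localized submodules. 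The $\Gamma$-sum with Fell isometries is then
\[
[\hat S_x]_g = (V^2_{(x,g)}\oplus V^1_{(x,g)})\,\hat s_\chi\,(\pi_1(\chi^{1/2})\oplus\pi_2(\chi^{1/2}))\,(V^1_{(x,g)}\oplus V^2_{(x,g)})^{-1},
\]
and localization holds because the range of $\hat s_\chi$ is already contained in $\maE^2_\chi\oplus\maE^1_\chi$—not because of any post-hoc cutoff multiplication. Your averaging formula $\sum_g (U_g\otimes 1)V_n(U_g^{-1}\otimes\lambda_g)$ is also missing the domain-side cutoff $\pi_1(\sqrt\chi)$; without it the sum is not even strongly convergent on the full module, and the isometry property is lost.

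A secondary point: your first stage proposes to re-derive $C(X)$-linear PPV by applying classical PPV fibrewise and patching via continuous selection. This is unnecessary—the original PPV theorem \cite{PPV} is already about $X$-extensions and delivers the $C(X)$-adjointable unitary directly—and continuous selection of unitaries across a parameter space is generally obstructed, so this route would require its own justification. The paper simply invokes \cite{PPV} (and its norm-controlled refinement in Appendix~\ref{ControlPPV}) on the localized algebra, then assembles.
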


\medskip

In the next section, Theorem \ref{GequivVRiso} is first partially proved, more precisely we prove  the weaker version stated as Theorem \ref{GequivVRiso2},  which only constructs one unitary $W$ with the allowed properties. It is only later on in Subsection \ref{ControlNow} that the construction of the sequence $(W_n)_n$ is carried out with the norm-control.  In the sequel, an isometry (resp. unitary) $S$ satisfying the (up to compact operators) intertwining property \eqref{intertwine} will be referred to as a PPV-isometry (resp. PPV-unitary).\\

\medskip

\subsection{Equivariant Paschke-Higson duality}  As an important application, we deduce the Paschke-Higson duality isomorphism for $\Gamma$-families. More precisely, we assume now and for simplicity that the action of $\Gamma$ on $Z$ is {\underline{cocompact}} {and that $Z$ is a proper-metric space with a chosen $\Gamma$-invariant distance}.
In \cite{BR2}, we defined  a generalization of the classical equivariant Roe algebras of pseudolocal and locally compact operators associated
with a fiberwise ample representation of $C(X, A)$ on the Hilbert $C(X)$-module $(\ell^2\Gamma^\infty\otimes H)\otimes C(X)$ induced by a given ample representation of $A$ in a fixed $H$. The Roe algebra of pseudolocal operators is
denoted by $D^*_\Gamma(X, A; (\ell^2\Gamma^\infty\otimes H))$, and the Roe algebra of locally compact operators is denoted $C^*_\Gamma(X, A; (\ell^2\Gamma^\infty\otimes H))$. More precisely,  $D^*_\Gamma(X, A; (\ell^2\Gamma^\infty\otimes H))$ is by definition the norm closure in $\maL_{C(X)} \left( C(X)\otimes (\ell^2\Gamma^\infty\otimes H)\right)$  of the space
of $\Gamma$-invariant operators with finite propagation and whose commutators with the elements of $C(X, A)$ are  compact operators. The $C^*$-algebra $C^*_\Gamma(X, A; (\ell^2\Gamma^\infty\otimes H))$ is on the other hand its subspace which is composed of those operators which satisfy  the additional condition that their composition with the elements of $C(X, A)$ are already compact operators.

An obvious observation is that $C^*_\Gamma(X, A; (\ell^2\Gamma^\infty\otimes H))$ is  a 2-sided closed ideal in the unital $C^*$-algebra $D^*_\Gamma(X, A; (\ell^2\Gamma^\infty\otimes H))$. Our Paschke-Higson
duality theorem identifies the $K$-theory of the quotient Roe algebra $Q^*_\Gamma(X, A;(Z,\ell^2\Gamma^\infty\otimes L^2Z))$ with the $\Gamma$-equivariant $KK$-theory of the pair of $\Gamma$-algebras $(A, C(X))$. More precisely:

\medskip

\begin{theorem}\label{Paschke2}
Suppose again that the isometric action of $\Gamma$ on $Z$ is proper and cocompact. Then we have a group isomorphism
$$
\maP_*: K_*(Q^*_\Gamma(X, A; (Z, \ell^2\Gamma^\infty\otimes H))) \xrightarrow{\cong} KK^\Gamma_{*+1}(A, C(X)), \quad \quad  *= 0, 1.
$$
\end{theorem}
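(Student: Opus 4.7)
The plan is to construct the Paschke-Higson map $\maP_*$ explicitly on representatives and then use the equivariant PPV Theorem~\ref{GequivVRiso}, together with a finite-propagation approximation procedure, to show it is an isomorphism. For $*=0$, I would send a class $[q]\in K_0(Q^*_\Gamma)$, represented after stabilization by a projection $q$ in (matrices over) $Q^*_\Gamma$, to the $\Gamma$-equivariant odd Kasparov $(A,C(X))$-cycle $(\maE,\what\pi,2P-\Id)$, where $\maE=(\ell^2\Gamma^\infty\otimes H)\otimes C(X)$ carries the fixed fibrewise ample $\Gamma$-equivariant $C(X)$-representation $\what\pi$ underlying the definition of $Q^*_\Gamma$, and $P\in D^*_\Gamma$ is any self-adjoint lift of $q$. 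The Kasparov conditions are automatic from the very definitions of $D^*_\Gamma$ and $C^*_\Gamma$: the element $(2P-\Id)^2-\Id=4(P^2-P)$ lies in $C^*_\Gamma$, so its composition with $\what\pi(\varphi)$ is $C(X)$-compact for every $\varphi\in C(X,A)$, while the pseudolocality of $P$ supplies the compact commutators $[\what\pi(\varphi),P]$. For $*=1$ I would do the parallel construction with a unitary representative $u$ lifted to $U\in D^*_\Gamma$, building an even cycle on $\maE\oplus\maE$ by the standard off-diagonal doubling trick.

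Well-definedness of $\maP_*$ reduces to three checks. First, any two self-adjoint (resp.\ unitary) lifts differ by an element of $C^*_\Gamma$, so the straight-line homotopy between them stays in $D^*_\Gamma$ and yields an operator homotopy of Kasparov cycles. Second, independence of the chosen projection/unitary representative of the $K$-theory class follows from standard stabilization and homotopy arguments. Third, and crucially, independence of the chosen fibrewise ample representation $\what\pi$ comes directly from Theorem~\ref{GequivVRiso}: two such representations are conjugate, modulo compacts, by a $\Gamma$-invariant finite-propagation PPV-unitary, which identifies the two Roe-type extensions compatibly with the Paschke construction at the level of $K$-theory.

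To prove surjectivity, I start with an arbitrary $\Gamma$-equivariant Kasparov cycle $(E',\what\pi',F')$ representing a class in $KK^\Gamma_{*+1}(A,C(X))$. By equivariant Kasparov stabilization together with addition of a degenerate cycle, I may assume $E'=\maE$; then Theorem~\ref{GequivVRiso} lets me absorb the a priori arbitrary representation $\what\pi'$ into the standard ample $\what\pi$ on $\maE$, yielding an operator which is $\Gamma$-invariant, has compact commutators with $\what\pi$, and whose square minus the identity has locally compact products with $\what\pi$. In other words, after absorption, $F'$ differs from being in $D^*_\Gamma$ only by failing to have finite propagation. I would then approximate $F'$ by a $\Gamma$-invariant finite-propagation operator $\tF'$ using cutoffs and partition-of-unity arguments on the cocompact quotient $Z/\Gamma$ followed by $\Gamma$-averaging, with errors controlled in norm modulo compacts so as to stay within the same Kasparov homotopy class. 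The image of $\tF'$ in $Q^*_\Gamma$ then yields the sought $K$-theory class. Injectivity would proceed along the same lines: an operator homotopy witnessing $\maP_*([q])=0$ can, after the absorption and approximation procedure, be converted into a $K$-theoretic nullhomotopy of $[q]$ inside $Q^*_\Gamma$.

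The main technical obstacle will be the finite-propagation approximation step in the surjectivity argument. Naive $\Gamma$-averaging of a finite-propagation operator only converges in the strong topology, which is a priori too weak to preserve the ``pseudolocal modulo compacts'' identities underlying the Kasparov cycle — exactly the tension flagged in the introduction. The resolution will rely on the uniform propagation bound ($\leq 7$) furnished by Theorem~\ref{GequivVRiso} together with the cocompactness of the $\Gamma$-action on $Z$: the former keeps the averaged operators inside the Roe algebras with a uniform geometric control, while cocompactness lets one upgrade strong convergence to norm convergence on the compactly supported data actually appearing in the Kasparov cycle.
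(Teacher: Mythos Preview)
Your proposal is correct and follows essentially the same strategy as the paper: define the natural Paschke map $\maP_*$ on lifted projections/unitaries, then use Kasparov stabilization, the equivariant PPV theorem to absorb the representation into the standard ample one, and a cutoff-averaging replacement $F\mapsto \sum_g g(\sqrt{\chi}F\sqrt{\chi})$ to achieve finite propagation. The paper packages your surjectivity argument as the construction of an explicit inverse $\maP'$ via a six-step normal form for Kasparov cycles (ending with the class $[q(\tfrac12(W_{11}W_{11}^*+F_6))]$), but the content is the same; note only that the finite-propagation approximation step is more elementary than you suggest---in the cocompact case the averaged operator is a compact perturbation of the original by pseudolocality, so no delicate convergence argument is needed, and the uniform propagation bound on $W$ is used earlier (to keep the conjugated operator inside the Roe framework) rather than here.
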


Notice that $Z$ does not appear in the RHS, only its existence is supposed so that the LHS does not depend on the choice of such $Z$. The fact that the ample representation does not appear in the RHS is standard due to our PPV theorem.  In the case of trivial $\Gamma$, this theorem is well known, see for instance \cite{HigsonPaschke, Valette}. An interesting case corresponds to the case $A= C_0(Z)$. Then we get using the notations from \cite{BR2} the following theorem which was fully used there to deduce the Higson-Roe sequence for the groupoid $G=X\rtimes \Gamma$:

\begin{theorem}\label{Paschke3}
Suppose again that the isometric action of $\Gamma$ on $Z$ is proper and cocompact. Then we have a group isomorphism
$$
\maP_*: K_*(Q^*_\Gamma(X; (Z,\ell^2\Gamma^\infty\otimes H))) \xrightarrow{\cong} KK_\Gamma^{*+1}(Z, X), \quad \quad  *= 0, 1.
$$
\end{theorem}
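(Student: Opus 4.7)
The plan is to obtain Theorem \ref{Paschke3} as a direct specialization of Theorem \ref{Paschke2}: set $A = C_0(Z)$, which is a separable $\Gamma$-proper $C^*$-algebra over $Z$ (with the tautological inclusion $C_0(Z) \hookrightarrow ZM(C_0(Z))$). With this choice of $A$, the Roe-type quotient algebra $Q^*_\Gamma(X, C_0(Z); (Z, \ell^2\Gamma^\infty \otimes H))$ appearing in the source of $\maP_*$ is by construction the algebra $Q^*_\Gamma(X; (Z,\ell^2\Gamma^\infty\otimes H))$ of Theorem \ref{Paschke3}, while the target $KK^\Gamma_{*+1}(C_0(Z), C(X))$ is, by the standard convention for equivariant Kasparov groups of locally compact proper $\Gamma$-spaces, the group $KK_\Gamma^{*+1}(Z, X)$. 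Hence the isomorphism produced by Theorem \ref{Paschke2} is precisely the required $\maP_*$.

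It therefore suffices to sketch the strategy for Theorem \ref{Paschke2}. The Paschke map is built in classical style. In the even case $*=0$, a projection $p$ in a matrix algebra over the unitization of $Q^*_\Gamma$ lifts to a self-adjoint element $P \in D^*_\Gamma$; the triple $(C(X)\otimes \ell^2\Gamma^\infty\otimes H, \what\pi, 2P-1)$ is then a $\Gamma$-equivariant Kasparov $(A, C(X))$-bimodule, since the defining pseudolocal/locally-compact conditions of the Roe algebras yield compact commutators with $\what\pi(\varphi)$ and compact $P^2 - P$, delivering an element of $KK^\Gamma_1(A, C(X))$. In the odd case $*=1$, one starts from a unitary $u \in Q^*_\Gamma$ and applies a standard Toeplitz/Ext construction, equivalently the boundary map of the exact sequence $0 \to C^*_\Gamma \to D^*_\Gamma \to Q^*_\Gamma \to 0$ composed with Kasparov's identification $\mathrm{Ext} \cong KK$. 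Two lifts of the same class differ by an element of $C^*_\Gamma$, i.e.\ a compact perturbation of the Kasparov operator, so the construction is well-defined.

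The bijectivity of $\maP_*$ is where the equivariant PPV Theorem \ref{GequivVRiso} enters decisively. Independence of the chosen fibrewise ample representation is obtained by conjugating with the $\Gamma$-invariant finite-propagation unitary $W$ furnished by Theorem \ref{GequivVRiso} in the cocompact proper-metric case; since $W$ has finite propagation it is a multiplier of both $D^*_\Gamma$ and $C^*_\Gamma$, hence of $Q^*_\Gamma$, and conjugation by $W$ implements a canonical $K$-theory isomorphism compatible with $\maP_*$. For surjectivity, a Kasparov bimodule $(E, \pi, F)$ is PPV-absorbed into the standard ample representation so that, up to compacts, $F$ becomes a pseudolocal lift of an element of $Q^*_\Gamma$ whose Paschke image recovers the original class. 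For injectivity, a vanishing Kasparov class is deformed via operator-homotopy together with degenerate PPV-absorption to one realising a trivial $K$-theory class in $Q^*_\Gamma$. The principal obstacle throughout is that both the finite propagation property \emph{and} the norm-controlled approximation of the PPV intertwiners are required, not just an existential Voiculescu absorption; without this, $W$ would not lie in the multiplier algebra of the Roe algebras and the Paschke comparison would collapse. This is precisely why Theorem \ref{GequivVRiso} was formulated with uniform localized supports and a norm-convergent sequence of approximants.
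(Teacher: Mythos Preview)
Your reduction of Theorem \ref{Paschke3} to Theorem \ref{Paschke2} by taking $A=C_0(Z)$ is exactly what the paper does; the paper states Theorem \ref{Paschke3} immediately after Theorem \ref{Paschke2} precisely as this specialization, so this part is correct and matches.

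Your sketch of Theorem \ref{Paschke2}, however, is organized differently from the paper's argument. You define the forward Paschke map $\maP_*$ by lifting projections/unitaries from $Q^*_\Gamma$ to $D^*_\Gamma$ and then argue bijectivity by PPV-absorption (surjectivity) and homotopy plus degenerate absorption (injectivity). The paper instead constructs an \emph{explicit inverse} $\maP'_0: KK^\Gamma_1(A,C(X))\to K_0(Q^*_\Gamma)$ via a six-step normalization of an arbitrary Kasparov cycle $(\sigma,E,F)$: Kasparov stabilization, an equivariant embedding into $\ell^2\Gamma\otimes H\otimes C(X)$ using the cutoff $\chi$, further infinite stabilization, then a single application of the equivariant PPV Theorem \ref{GequivVRiso2} to conjugate the representation into the fixed standard one, and finally an averaging to obtain a finite-propagation operator whose image in $Q^*_\Gamma$ gives the class. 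Your approach and the paper's are dual organizations of the same circle of ideas, but the paper's explicit inverse has the advantage of producing a concrete formula for $\maP'$, which is what is actually used downstream in \cite{BR2}.

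One point to correct: you assert that the norm-controlled sequence of PPV unitaries (Theorem \ref{GequivVRiso}) is \emph{required} for Paschke duality. In fact the paper's proof of Theorem \ref{Paschke2} invokes only the weaker Theorem \ref{GequivVRiso2} (a single $\Gamma$-invariant finite-propagation unitary, no norm control). The finite-propagation property is indeed essential so that $W$ multiplies the Roe algebras, but the norm-controlled refinement is not used at this stage.
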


\medskip



The proof of our PPV theorem as well as the deduction of the Paschke-Higson isomorphism, are carried out in the next sections.

\medskip

\section{Proof of the extended PPV theorem}

{We devote this section to the proof of our $G$-equivariant, norm-controlled and support-localized, version of the PPV theorem, say Theorem \ref{GequivVRiso}. Inorder to simplify the reading of the this technical proof, we have first given the proof of a weaker version which does not adress the norm-control question.}

\subsection{The support-localized PPV theorem}
{We first forget the norm-control and prove the following weaker version of Theorem \ref{GequivVRiso}.}

\begin{theorem}[Extended PPV theorem]\label{GequivVRiso2}\
Assume that the action of $\Gamma$ on $Z$ is  proper with a uniform cutoff function $\chi$. Let   $H_1$ and $H_2$ be two infinite-dimensional separable complex Hilbert spaces,  endowed with unitary representations of $\Gamma$.  Let $\what\pi_1$ and $\what\pi_2$ be as above two fiberwise ample $\Gamma$-equivariant $C(X)$-representations of
$C(X, A)$ in the Hilbert $\Gamma$-modules $C(X)\otimes H_1$  and $C(X)\otimes H_2$ respectively.
Then, identifying  each $\what\pi_i$ with the trivially extended representation $\left(\begin{array}{cc} \what\pi_i & 0\\ 0 & 0\end{array}\right)$ that is further tensored by the identity
of $\ell^2\Gamma^\infty$, there exists a {\underline{$\Gamma$-invariant}}  unitary operator
$$
W\in \maL_{C(X)} \left([(H_1\oplus H_2)\otimes \ell^2\Gamma^\infty]\otimes C(X) ,  [(H_2\oplus H_1)\otimes \ell^2\Gamma^\infty] \otimes C(X)\right),
$$
which {essentially} intertwines the extended representations, i.e. such that
$$
  W^*\what\pi_2 (\varphi) W - \what\pi_1(\varphi) \in \maK_{C(X)} \left([(H_1\oplus H_2)\otimes \ell^2\Gamma^\infty]\otimes C(X)\right), \text{ for all } \varphi\in C(X, A).
  $$
  Moreover, {we can ensure that the operator $W$ is localized. In particular,} if the proper $\Gamma$-space $Z$ is cocompact then we can ensure that the unitary $W$ has \underline{finite propagation}.\end{theorem}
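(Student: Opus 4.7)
The plan is to prove the theorem by combining a non-equivariant, $C(X)$-linear family Pimsner--Popa--Voiculescu theorem with an equivariantization procedure using the auxiliary tensor factor $\ell^2\Gamma^\infty$, and tracking supports through the cutoff $\chi$. Three conditions must be enforced simultaneously -- $\Gamma$-invariance, compactness of the commutators, and localization of supports -- which are in natural tension. As a first step, I discard the $\Gamma$-action entirely: the pair $(\what\pi_1,\what\pi_2)$ consists of two fibrewise ample $C(X)$-representations of the separable $C(X)$-algebra $C(X,A)$, and the family version of the Pimsner--Popa--Voiculescu theorem (obtained by fibering over $X$ and invoking Kasparov's stabilization) yields a $C(X)$-linear unitary
\[
V:\bigl[(H_1\oplus H_2)\otimes\ell^2\N\bigr]\otimes C(X) \longrightarrow \bigl[(H_2\oplus H_1)\otimes\ell^2\N\bigr]\otimes C(X)
\]
of the shape stated in the theorem, satisfying $V^*\what\pi_2(\varphi)V-\what\pi_1(\varphi)\in\maK_{C(X)}$ for every $\varphi\in C(X,A)$, but neither $\Gamma$-invariant nor support-localized.

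The heart of the argument is the equivariantization. I write $\ell^2\Gamma^\infty\cong \ell^2\Gamma\otimes \ell^2\N$ with the left regular representation $\lambda$ on the first factor, and introduce the Fell absorption unitaries $F_i(\xi\otimes \delta_g)=U_i(g)\xi\otimes\delta_g$ which convert the diagonal action $U_i\otimes \lambda$ into the action $1\otimes\lambda$. In the Fell-absorbed picture, $\Gamma$-equivariance of an operator $W$ reduces to commutation with $\lambda$, i.e.\ to a block-diagonal structure in the $\delta_g$ basis. Using the cutoff $\chi$ (chosen so that $\sum_{g\in\Gamma}(g\chi)^2 = 1$), I assemble the data from $V$ into such a block-diagonal operator $W_0$: its $g$-block is a $\chi$-cutoff piece of the $g$-translate $U_2(g)\,V\,U_1(g)^{-1}$, chosen so that the block support lies in $gW_\chi\times gW_\chi$. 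A Hilbert-hotel absorption step -- which uses the vacant diagonal zero blocks of the $2\times 2$ trivially extended matrix form together with the $\ell^2\N$ factor to absorb the orthogonal complement of the resulting partial isometry -- then upgrades $W_0$ to a genuine $\Gamma$-invariant unitary $W$ with $\supp(W)\subset\bigcup_{g\in\Gamma}gW_\chi\times gW_\chi$, hence localized with a small, uniformly bounded propagation index.

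Finally, I verify the intertwining property modulo $C(X)$-compacts. Decomposed over the $\delta_g$ basis, the error $W^*\what\pi_2(\varphi)W-\what\pi_1(\varphi)$ becomes a sum indexed by $g\in\Gamma$ of cutoff-sandwiched versions of the PPV commutators $V^*\pi_2(g^{-1}\varphi)V-\pi_1(g^{-1}\varphi)$. For $\varphi$ of compact $Z$-support, only finitely many $g\in\Gamma$ contribute non-trivially -- exactly those with $gW_\chi\cap\supp(\varphi)\neq\emptyset$, a finite set by uniform properness -- and each non-vanishing term is $C(X)$-compact by the PPV property of $V$; a density argument in $C(X,A)$ extends the conclusion to arbitrary $\varphi$. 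In the cocompact metric case, $W_\chi$ has compact closure, so localized support automatically yields a finite-propagation bound by the equivalence established in the appendix.

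The main obstacle is the tension between $\Gamma$-equivariance and compactness of the commutators: a naive $\Gamma$-averaging of the non-equivariant $V$ would produce an infinite sum of compact operators whose norms need not decay, hence a non-compact commutator. The remedy -- spreading $V$ across the $\Gamma$-orbit within $\ell^2\Gamma^\infty$ via Fell absorption rather than collapsing it onto a single copy of $H_i$ -- is the technical heart of the construction, and it is precisely where the uniform properness hypothesis (properness of $\Gamma_\chi^{(1)}\to\Gamma$) is indispensable: without it the cross-terms arising from overlapping translates $gW_\chi\cap hW_\chi$ would accumulate and destroy compactness.
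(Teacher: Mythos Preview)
Your overall architecture---non-equivariant family PPV followed by $\Gamma$-equivariantization via Fell-type absorption and a cutoff $\chi$---matches the paper's, but there is a genuine gap at the localization step. You apply PPV to the \emph{full} algebra $C(X,A)$, obtaining a unitary $V$ with no support control, and then assert that the $g$-block of $W_0$ can be taken as ``a $\chi$-cutoff piece of $U_2(g)VU_1(g)^{-1}$'' with support in $gW_\chi\times gW_\chi$. This cannot be arranged: a one-sided cutoff $V\pi_1(\chi)$ controls only the domain support, not the range; a two-sided cutoff $\pi_2(\chi)V\pi_1(\chi)$ controls both but is no longer a partial isometry, so your Hilbert-hotel step does not apply. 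With only one-sided cutoffs, your error term $W^*\what\pi_2(\varphi)W-\what\pi_1(\varphi)$ becomes (in the Fell-absorbed block picture) an infinite direct sum over $g\in\Gamma$ of compact operators $\pi_1(\chi)\bigl[V^*\pi_2(g^{-1}\varphi)V-\pi_1(g^{-1}\varphi)\bigr]\pi_1(\chi)$ with no norm decay in $g$, hence not compact. Your finiteness claim ``only those $g$ with $gW_\chi\cap\Supp(\varphi)\neq\emptyset$ contribute'' is precisely what fails: it presupposes the range-side support control that a global $V$ does not have.

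The paper's remedy is to apply PPV not to $A$ but to the \emph{local} ideal $A_\chi=C_0(V_\chi)A^+$, acting on the submodules $\maE^i_\chi=\overline{\pi_i(A_\chi)(H_i\otimes C(X))}$. The resulting partial unitary $\hat s_\chi:\maE^1_\chi\oplus\maE^2_\chi\to\maE^2_\chi\oplus\maE^1_\chi$ has range supported in $\Supp(\chi)$ \emph{by construction}, so a single one-sided cutoff $\hat s_\chi\,(\pi_1(\chi^{1/2})\oplus\pi_2(\chi^{1/2}))$ suffices and the isometry property survives. Spreading this local operator over $\Gamma$ yields a genuinely localized isometry (Lemma~\ref{finpropVRiso1}, Lemma~\ref{finpropVRiso3}), which is then upgraded to a localized non-equivariant unitary (Theorem~\ref{NCVRisometry2}) and only \emph{afterwards} averaged to become $\Gamma$-invariant (Theorem~\ref{GequivVRiso1}). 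At that final averaging step, it is the prior localization of the non-equivariant unitary---together with uniform properness---that guarantees $\what\pi_2(f)W$ is a \emph{finite} sum, and hence that the commutator is a finite sum of compacts.
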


\medskip

{Under the assumption that $A=C_0(Z)$ for a proper and cocompact $\Gamma$-space $Z$, a striking application of Theorem \ref{GequivVRiso2} is to the equivariant family Paschke-Higson duality Theorem \ref{Paschke2}, as stated in Section \ref{PaschkeDuality} and which allows to incorporate  the Baum-Connes map for the groupoid $G=X\rtimes \Gamma$ in a long six-term exact sequence, see \cite{BR2}.} Notice that if $\Gamma$ is a finite  group then any separable $\Gamma$-algebra is a proper $\Gamma$-algebra over the trivial space $Z=\{\star\}$, and the theorem is valid for any such $\Gamma$-algebra. This is well known, see  \cite{PPV} for trivial $\Gamma$ and unital $A$, and  \cite{Kasparov1} for the general case of compact group actions.
Forgetting first the $\Gamma$-invariance of the intertwining unitary, we shall first prove the following independent result:

\medskip

\begin{theorem}\label{NCVRisometry2} Under the assumption and notations of Theorem \ref{GequivVRiso2} but for any proper $\Gamma$-action on $Z$,  there exists a unitary
$$
{U}\in \maL_{C(X)} \left( [\ell^2\Gamma^\infty\otimes (H_1\oplus H_2)]\otimes C(X), [\ell^2\Gamma^\infty\otimes (H_2\oplus H_1)] \otimes C(X)\right)
$$
such that
\begin{equation}\label{intertwine}
  {U^*\what\pi_2^\infty (\varphi) U}-\what\pi_1^\infty(\varphi) \in  \maK_{C(X)} \left([\ell^2\Gamma^\infty\otimes (H_1\oplus H_2)]\otimes C(X) \right) \text{ for all } \varphi\in C(X, A).
\end{equation}
  Moreover, we can ensure that the operator {the operator $U$ is localized. In particular,}  if the proper $\Gamma$-space $Z$ is cocompact, then we can ensure that $U$ has {\underline{finite propagation}}.
\end{theorem}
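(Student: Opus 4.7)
The plan is to implement the classical Pimsner-Popa-Voiculescu absorption argument in the Hilbert $C(X)$-module setting, while tracking supports throughout. Since $\Gamma$-invariance of the intertwiner is not required in the present statement, I suppress the group action for now; it will be restored by averaging in the next step to yield Theorem~\ref{GequivVRiso2}. A standard doubling on $[(H_1\oplus H_2)\otimes \ell^2\Gamma^\infty]\otimes C(X)$, using the Kasparov stabilization theorem over $C(X)$ to identify the two orthogonal complements, reduces the construction of the PPV-unitary $U$ to that of a PPV-isometry $V\colon \ell^2\Gamma^\infty\otimes H_1\otimes C(X)\to \ell^2\Gamma^\infty\otimes H_2\otimes C(X)$ with the same support-control properties. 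One then pairs $V$ with a ``reverse'' PPV-isometry built by swapping the roles of $\what\pi_1$ and $\what\pi_2$, and the support properties survive the doubling.

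The core technical ingredient is a family absorption lemma. Given a finite set $F\subset C(X, A)$, a tolerance $\epsilon>0$, a previously constructed finite-rank partial isometry $V_0$ to be avoided, and a cutoff $f\in C_0(Z)$ of small compact support, the lemma produces a partial isometry $W\in \maL_{C(X)}(\ell^2\Gamma^\infty\otimes H_1\otimes C(X), \ell^2\Gamma^\infty\otimes H_2\otimes C(X))$ with finite-dimensional source projection $P_W$ and range orthogonal to $V_0$, satisfying $\|W^*\what\pi_2(\varphi)W - P_W\what\pi_1(\varphi)P_W\|\leq \epsilon$ for every $\varphi\in F$, and whose support in $Z\times Z$ lies inside a controlled neighborhood of $\Supp(f)\times\Supp(f)$. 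Its construction rests on fibrewise ampleness: on each fibre $x\in X$, $\pi_{2,x}(A)$ contains no nonzero compact operator, so after cutting off by $f$ one can still extract infinitely many orthogonal unit vectors on which $\pi_{2,x}$ approximately mimics $\pi_{1,x}$. The amplification by $\ell^2\Gamma^\infty$ then provides the room needed to assemble these fibrewise choices into a $C(X)$-adjointable operator with uniform control in $x$.

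The isometry $V$ is then built by diagonal induction. Fix a dense sequence $\{\varphi_n\}_n\subset C(X, A)$, cutoffs $\{f_n\}_n\subset C_0(Z)$ whose supports exhaust $Z$, and tolerances $\epsilon_n\downarrow 0$. Iterating the absorption lemma yields partial isometries $V_n$ with pairwise orthogonal initial and final projections, approximately intertwining $\varphi_1,\dots,\varphi_n$ to within $\epsilon_n$, and with support localized near $\Supp(f_n)\times\Supp(f_n)$. The strong-operator sum $V := \sum_n V_n$ is then a PPV-isometry: the defect $V^*\what\pi_2(\varphi)V - \what\pi_1(\varphi)$ is a $C(X)$-norm limit of finite-rank $C(X)$-adjointable operators, hence $C(X)$-compact.

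The main technical obstacle, as I see it, is the simultaneous propagation at each inductive stage of three constraints — approximate intertwining, orthogonality to previously selected pieces, and support localization; this is precisely the function of the cutoff $f$ together with the ample $\ell^2\Gamma^\infty$ factor in the absorption lemma, which together provide the spatial room to perform all three at once. In the cocompact case, a cutoff whose support meets only finitely many of its $\Gamma$-translates can always be chosen, so the localized support of $V$ converts, via the appendix characterization, into a uniform finite-propagation bound; this yields the last assertion of the theorem.
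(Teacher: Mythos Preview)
Your strategy differs from the paper's and, as sketched, does not secure the localization claim. The paper does \emph{not} rebuild a Voiculescu isometry from scratch. It fixes a single cutoff $\chi$, passes to the essentially unital ideal $A_\chi=C_0(V_\chi)A^+$, applies the original PPV theorem there \emph{as a black box} to obtain a local intertwiner $\hat s_\chi$ on $\mathcal E^1_\chi\oplus\mathcal E^2_\chi$, and then spreads this one local piece over $Z$ using the $\Gamma$-action, via
\[
[\hat S_x]_g=(V^2\oplus V^1)_{(x,g)}\,\hat s_\chi\,(\pi_1(\sqrt\chi)\oplus\pi_2(\sqrt\chi))\,(V^1\oplus V^2)_{(x,g)}^{-1}.
\]
Localization is then automatic, since every $g$-slot is sandwiched by operators supported in $gW_\chi$, forcing $\Supp(\hat S)\subset\overline{\bigcup_g gW_\chi\times gW_\chi}=\overline{\mathcal A_0}$. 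The passage from isometry to unitary is equally explicit, using the shifts $R_1,R_2$ on the $\ell^2\N$ factor and the formula $S=\bigl(I-S_1S_1^*+S_1R_1^*S_1^*,\ R_2S_1^*\bigr)^T$, followed by $U=T^*\alpha^*S$ with a symmetric $T$ and a flip $\alpha$; no abstract stabilization is invoked, precisely so that supports can be tracked step by step.

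In your scheme, by contrast, the source projections $P_{V_n}$ of the Voiculescu partial isometries must come from a quasi-central approximate unit in $C(X,\mathcal K)$, and such finite-rank projections carry no $Z$-support information; the spatial cutoffs $f_n\in C_0(Z)$ you introduce play a different role and cannot simultaneously make the $P_{V_n}$ finite-rank, mutually orthogonal, summing to the identity, \emph{and} supported under some $\what\pi_1(f_n)$ with $\Supp(f_n)$ contained in a single $gW_\chi$. If the supports of the $f_n$ are increasing and exhaust $Z$, then $\Supp(V_n)$ grows without bound and $V=\sum V_n$ lies in no $\mathcal A_k$; if instead each $f_n$ has small support, you have not explained why $\sum P_{V_n}=I$. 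Separately, your reduction from a PPV-unitary to a PPV-isometry invokes Kasparov stabilization to identify orthogonal complements, but that isomorphism is abstract and would itself destroy any support control you had obtained.
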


We thus fix  two fiberwise ample $G$-equivariant representations $\what\pi_1$ and $\what\pi_2$ of
$C(X, A)$ in the Hilbert $G$-modules $H_1\otimes C(X)$  and $H_2\otimes C(X)$ respectively.
In the sequel, we shall denote for $x\in X$, by  $q_x$ the composite map
$$
q_x: \, \maL_{C(X)} (H_i\otimes C(X))\xrightarrow{d_x} \maL(H_i)\xrightarrow{\pr} Q(H_i),
$$
where $d_x$ is evaluation at $x$ while the map $\maL(H_i)\xrightarrow{\pr} Q(H_i)$ is the quotient projection onto the Calkin algebra $Q(H_i)=\maL(H_i)/\maK (H_i)$. We begin with the construction of a finite-propagation PPV-\textit{isometry}.



\begin{lemma}\label{finpropVRiso1}
 {Under the assumptions of Theorem \ref{NCVRisometry2},} there exists an isometry
 $$
 \hat{S} \in \maL_{C(X)}((H_1\oplus H_2)\otimes C(X),  (H_2\oplus H_1)\otimes \ell^2\Gamma \otimes C(X))
 $$
 such that
 $$
 \hat{S}^*((\what\pi_2(f)\otimes \id_{\ell^2\Gamma})\oplus 0)\hat{S}- (\what\pi_1(f)\oplus 0) \in \maK_{C(X)}((H_1\oplus H_2)\otimes C(X)), \quad  \forall f\in C_0(X\times Z)
 $$
 Moreover, we can ensure that the operator $\hat{S}$ {is localized}. In particular, when $Z/\Gamma$ is compact, we can ensure that $\hat{S}$ has finite propagation.
\end{lemma}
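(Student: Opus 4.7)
The plan is to reduce to constructing an isometry $\hat{S}_0 \colon H_1 \otimes C(X) \to H_2 \otimes \ell^2\Gamma \otimes C(X)$ that intertwines $\what\pi_1$ with $\what\pi_2 \otimes \id_{\ell^2\Gamma}$ modulo $C(X)$-compacts and has localized support. Taking $\hat{S}$ block-diagonal as $\hat{S}_0 \oplus V$ for any isometry $V \colon H_2 \otimes C(X) \to H_1 \otimes \ell^2\Gamma \otimes C(X)$, the off-diagonal blocks of $\hat{S}^* ((\what\pi_2(f)\otimes\id)\oplus 0) \hat{S} - (\what\pi_1(f)\oplus 0)$ vanish, and the $V$-block contributes nothing either to the intertwining or to the support, since both the source summand $H_2$ and the target summand $H_1 \otimes \ell^2\Gamma$ carry the zero trivially-extended representation.

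For $\hat{S}_0$, I would fix a non-negative continuous cutoff function $\chi$ for the proper $\Gamma$-action on $Z$, chosen so that $\sum_{\gamma \in \Gamma} \chi_\gamma^2 = 1$ with $\chi_\gamma(z) := \chi(\gamma^{-1}z)$ and $\Supp \chi_\gamma = \gamma \overline{W_\chi}$. For each $\gamma$ and $i\in\{1,2\}$ the closure $K_i^\gamma := \overline{\what\pi_i(\chi_\gamma)(H_i \otimes C(X))}$ is a $\what\pi_i(C(X, A))$-invariant Hilbert sub-$C(X)$-module on which $\what\pi_i$ restricts to a fibrewise ample representation of $C_0(X \times \gamma W_\chi)$. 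A family ($C(X)$-linear) form of the Voiculescu-Kasparov absorption theorem for fibrewise ample representations then produces a $C(X)$-linear unitary $U_\gamma \colon K_1^\gamma \to K_2^\gamma$ satisfying
\[
U_\gamma \what\pi_1(f)\big|_{K_1^\gamma} - \what\pi_2(f)\big|_{K_2^\gamma}\, U_\gamma \;\in\; \maK_{C(X)}(K_1^\gamma, K_2^\gamma) \quad \text{for every } f\in C_0(X\times Z),
\]
with validity for all $f \in C_0(X \times Z)$ (not just for $f$ supported in $\gamma W_\chi$) because the $C_0(X \times Z)$-action on $K_i^\gamma$ factors through restriction to $\overline{\gamma W_\chi}$.

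Now set $\hat{S}_0 \xi := \sum_{\gamma} U_\gamma \what\pi_1(\chi_\gamma) \xi \otimes \delta_\gamma$, where $\what\pi_1(\chi_\gamma)\xi \in K_1^\gamma$ is automatic. Pairwise orthogonality of $\{\delta_\gamma\}$ in $\ell^2\Gamma$ gives $\hat{S}_0^* \hat{S}_0 = \sum_\gamma \what\pi_1(\chi_\gamma)^2 = \what\pi_1(\sum_\gamma \chi_\gamma^2) = \id$, so $\hat{S}_0$ is an isometry. For the intertwining with $f \in C_0(X \times Z)$, the absorption property yields modulo $C(X)$-compacts
\[
\what\pi_1(\chi_\gamma)\, U_\gamma^* \what\pi_2(f) U_\gamma\, \what\pi_1(\chi_\gamma) \;\equiv\; \what\pi_1(\chi_\gamma)\what\pi_1(f)\what\pi_1(\chi_\gamma) \;=\; \what\pi_1(\chi_\gamma^2 f),
\]
and summing over $\gamma$, using $\sum_\gamma \chi_\gamma^2 = 1$, gives $\hat{S}_0^*(\what\pi_2(f) \otimes \id)\hat{S}_0 \equiv \what\pi_1(f)$ modulo $\maK_{C(X)}$. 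Summability of the compact defects is first justified for $f$ with compact $Z$-support (where properness of the action forces only finitely many $\gamma$ to contribute) and then extended by norm density. Finally, $\what\pi_1(\chi_\gamma a) = 0$ whenever $\Supp a \cap \gamma\overline{W_\chi} = \emptyset$, and $\what\pi_2(b)$ annihilates $K_2^\gamma$ whenever $\Supp b \cap \gamma\overline{W_\chi} = \emptyset$, so the support of $\hat{S}_0$ is contained in $\overline{\bigcup_\gamma \gamma W_\chi \times \gamma W_\chi}$, which is localization with propagation index $0$; in the cocompact metric setting this amounts to finite propagation.

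The main technical obstacle is producing the $U_\gamma$ as $C(X)$-linear unitaries $K_1^\gamma \to K_2^\gamma$ with the compact-defect intertwining: this requires a family version of the Voiculescu-Kasparov absorption theorem asserting that any two fibrewise ample $C(X)$-linear representations of a separable $C(X)$-algebra on countably generated Hilbert $C(X)$-modules are unitarily equivalent modulo $C(X)$-compacts. Fibrewise ampleness of $\what\pi_1$ and $\what\pi_2$ is indispensable here, and the result is supplied by Kasparov's absorption machinery for Hilbert modules adapted to the $C(X)$-setting. A secondary technicality is the summability of the compact error terms produced patch by patch, controlled through properness of the $\Gamma$-action on $Z$ together with a routine compact-support approximation of $f \in C_0(X \times Z)$.
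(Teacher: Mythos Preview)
Your overall architecture---use a cutoff, apply a family Voiculescu-type result on each piece, and assemble via $\ell^2\Gamma$---is the same as the paper's. But there is a real gap in your construction of $\hat S_0$, and it is hidden in the sentence ``A family ($C(X)$-linear) form of the Voiculescu--Kasparov absorption theorem \ldots produces a $C(X)$-linear unitary $U_\gamma\colon K_1^\gamma\to K_2^\gamma$''. The theorem you invoke at the end (``any two fibrewise ample $C(X)$-linear representations on countably generated Hilbert $C(X)$-modules are unitarily equivalent modulo $C(X)$-compacts'') is too strong: if $K_1^\gamma$ and $K_2^\gamma$ are not isomorphic as Hilbert $C(X)$-modules there is no unitary between them at all, compact defect or not. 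Nothing in the hypotheses forces $K_1^\gamma\cong K_2^\gamma$; the PPV theorem that is actually available produces a unitary between the \emph{full} standard modules $H_i\otimes C(X)$ (after extending the restricted representations by zero and stabilising via Kasparov), and the $(1,1)$-corner one extracts from it is only a contraction.

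This is exactly why the paper does \emph{not} follow your block-diagonal reduction $\hat S=\hat S_0\oplus V$. The $H_1\oplus H_2$ structure is used essentially: after applying PPV once on the stabilised modules and extracting the corner $s_\chi\colon\maE^1_\chi\to\maE^2_\chi$, the paper builds the $2\times 2$ unitary
\[
\hat s_\chi=\begin{pmatrix} s_\chi & (1-s_\chi s_\chi^*)^{1/2}\\ -(1-s_\chi^* s_\chi)^{1/2} & s_\chi^*\end{pmatrix}
\colon \maE^1_\chi\oplus\maE^2_\chi\longrightarrow \maE^2_\chi\oplus\maE^1_\chi,
\]
which is always a genuine unitary regardless of whether $\maE^1_\chi\cong\maE^2_\chi$. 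A second difference is that the paper applies PPV only once (for the single piece $V_\chi$) and then generates all the other pieces by conjugating with the $\Gamma$-action, rather than invoking PPV separately for every $\gamma$; besides being more economical, this is what makes the subsequent passage to a $\Gamma$-invariant unitary go through. Your argument could be repaired by importing the same $2\times 2$ trick in place of the unavailable $U_\gamma$, but then the role of the second summand is no longer that of an inert placeholder $V$.
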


\begin{proof}
When $\Gamma$ is a finite group, this result is well known, see for instance \cite{Kasparov1}, and we give the proof under the assumption that $\Gamma$ is infinite. Fix a cutoff function $\chi\in C(Z)$ for the proper $\Gamma$-action on $Z$. The quotient projection $Z\to Z/\Gamma$ then restricts into a proper map $\Supp(\chi) \to Z/\Gamma$. Denote by $V_\chi$ the interior of $\Supp(\chi)$ and {let $C_0(Z, \chi)$ be the $C^*$-subalgebra of $C_b(Z)$ generated by $C_0(Z)$ and $\chi$, and in the same way let $C_0(\chi)$ be the $C^*$-subalgebra of $C_b(Z)$ generated by $C_0(V_\chi)$ and  $\chi$. Elements of $C_0(Z, \chi)$ (resp. of $C_0(\chi)$) are then (uniform limits of) functions of type $f+\varphi \circ \chi$ where $f\in C_0(Z)$ (resp.  $f\in C_0(V_\chi)$) and $\varphi$ is any continuous function on $\R_+$ with $\varphi (0)=0$.  It is an obvious observation that $C_0(Z, \chi)\subset C_{0\vert\Gamma} (Z)$ and that  $C_0(\chi)$ is an ideal in $C_0(Z, \chi)$. Notice that the $C^*$-algebra $C_0(Z, \chi)$ as well as its ideal $C_0(\chi)$ are separable.
Recall that the $*$-homomorphism $\pi_i: A\to \maL_{C(X)} (H_i\otimes C(X))$ is non-degenerate and extends to a unital $*$-homomorphism, still denoted $\pi_i$, from $M(A)$ to $\maL_{C(X)} (H_i\otimes C(X))$.  We shall  identify $C_0(Z, \chi)$ with its range in the center of the multiplier algebra {of} $A$ when no confusion can occur.}

{We now
set  $A_\chi:= C_0(\chi) A^+$, then $A_\chi$ is a two-sided self-adjoint ideal in $C_0(Z, \chi) A^+$.}

 The proof of Lemma \ref{finpropVRiso1} will be split into 3 steps.

\textbf{Step 1{\em {(Apply PPV)}}:} Consider the Hilbert submodules $\maE^i_\chi:= \overline{\pi_i(A_\chi)(H_i\otimes C(X))}$ for $i=1,2$.

\textbf{Claim}: {{The $*$-homomorphism $\pi_i$, {extended to $C_0(Z, \chi) A^+$}, preserves $\maE^i_\chi$ {since the image of $C_0(Z, \chi)$ in $M(A)$ is valued in the center $ZM(A)$,} and its restriction  to $A_\chi$ (acting on $\maE^i_\chi$) is denoted $\pi_i^\chi$. This is again associated with a fibrewise ample representation $\what\pi_i^\chi$.}}


To check this, note that the field of Hilbert spaces associated to the Hilbert module $\maE^i_\chi$ is given by $(R_{{i},x}:=\overline{[\pi_{i,x}(A_\chi)H_i]})_{x\in X}$,
where $(\pi_{i,x})_{x\in X}$ is the field of
representations associated with the $*$-homomorphism $\pi_i: M(A)A\to \maL_{C(X)} (H_i\otimes C(X))$.
%
Then, the field of Hilbert space representations associated with  $\pi^\chi_i$ is given at $x\in X$ by the restriction of the representations $\pi_{i,x}$ and denoted
$\pi_{i, x}^\chi: A_\chi\rightarrow \maL(R_{{i},x})$. For each $x\in X$, the restricted representations $\pi_{i,x}^\chi$ are also {clearly} ample.

{The $C(X)$-module $\maE^i_\chi$ is countably generated and we may assume that it is   an orthocomplemented submodule of $H_i\otimes C(X)$, respectively for $i=1,2$.}
The $*$-homomorphism $\pi_i^\chi$ will be extended by zero on the orthocomplement, {this corresponds to extending each $\pi_{i, x }^\chi$ by zero on the orthogonal Hilbert subspace of $R_{i, x}$}. The corresponding extended representation  $\what\pi_i^\chi\oplus 0$ of $C(X, A_\chi)$ then satisfies the following properties \cite{PPV}:
\begin{enumerate}
\item  \underline{it is lower semi-continuous}, i.e.  for any convergent sequence $x_n\rightarrow x_0$ in $X$, we have $\bigcap_{n} I_{x_n} \subseteq I_{x_0}$ where
$I_x:= \ker(q_x\circ(\what\pi_i^\chi\oplus 0))\subseteq A_\chi$;
\item  \underline{it is exact}, i.e. $\cap_{x\in X} I_x =\{0\}$, and
 \item \underline{it is trivial}, i.e.  $\ker(q_x\circ (\what\pi_i^\chi\oplus 0))= \ker(d_x\circ (\what\pi^\chi_i\oplus 0))$.
\end{enumerate}

First note that (3) is automatically satisfied since $\what\pi^i_\chi$ is fibrewise ample.
To check (1), just notice that
$$
d_{x}\circ (\pi_i^\chi(a)\oplus 0)= (\pi_{i, x}^\chi\oplus 0)(a),
$$
and that each $\pi_{i,x}$ is ample here. A similar but easier argument can be used to prove (2).

\medskip

The $C(X)$-representation $\what\pi_i^\chi\oplus 0$ is given by the $*$-homomorphism $\pi_i^\chi\oplus 0 : A_\chi \to \maL_{C(X)} (H_i\otimes C(X))$. In order to apply the main PPV theorem about {\em{trivial $X$-extensions}}, we check now the same properties for the unique  extension of $\pi_i^\chi\oplus 0$ to the { $C^*$-algebra unitalization $A_\chi\oplus \C$ of $A_\chi$. In terms of $C(X)$-representations, we thus obtain the extended representation to $C(X, A_\chi)\oplus C(X)$  given by}
$$
(\what\pi_i^\chi\oplus 0)^+(f \oplus \lambda):= (\what\pi_i^\chi\oplus 0)(f) + \rho (\lambda)\text{ for } f\in C(X, A_\chi)\text{ and }\lambda\in C(X).
$$
{Here $\lambda$ acts on the Hilbert $C(X)$-module $H_i\otimes C(X)$ by the adjointable operator $\rho (\lambda)$ corresponding to the right module multiplication.}

{Since $\Gamma$ is infinite all the Hilbert spaces $R_{i, x}^\perp$ are infinite dimensional separable Hilbert spaces}, we may use  the  Kasparov stabilisation theorem to replace
$({\maE_\chi^i})^\perp$ by  the standard infinite dimensional countably generated  Hilbert $C(X)$-module  $H_i\otimes C(X)$, {so as to be able to apply the PPV theorem{, see \cite{Kasparov1}}.}

Hence the verification of the three properties for  $(\what\pi_i^\chi\oplus 0)^+$ is obvious.
If for instance $(f,\lambda)\in \ker(q_x\circ (\what\pi_i^\chi\oplus 0)^+)$, then $\lambda =0$ since $({\maE_\chi^i})^\perp_x$ is infinite-dimensional. Thus,
we again get $\pi^\chi_{i,x}(f (x))\in \maK(R_{{i}, x})\implies f(x)= 0$, so $(f,\lambda)\in \ker(d_x\circ (\what\pi_i^\chi\oplus 0)^+)$. Therefore, we get the triviality property.
Lower-semicontinuity and exactness are proved  similarly and are left as an exercise.

{Therefore, the representations $(\what\pi^\chi_i\oplus 0)^+$ are essentially unitarily equivalent by the PPV theorem {\cite{PPV}[Theorem 2.10], i.e.}} there exists  a unitary
$S_\chi\in \maL_{C(X)} (H_1\otimes C(X), H_2\otimes C(X))$ such that we have in particular for any $f\in C(X, A_\chi)$:
$$
S^*_\chi\begin{bmatrix}   \what\pi^\chi_2(f) &0\\ 0 & 0   \end{bmatrix} S_\chi -\begin{bmatrix}   \what\pi^\chi_1(f) &0\\ 0 & 0   \end{bmatrix} \in \maK_{C(X)} (H_1\otimes C(X)).
$$
Notice that  we then have the same relation for ${f\in C(X, C_0(\chi))}$ viewed in $C(X, A_\chi)$, {hence in particular for $f\in C_0(X\times V_\chi)$ but also for $\chi$ as well as any continuous function of $\chi$ vanishing at zero, for instance for $\chi^{1/2}$}.

\textbf{Step 2 ({\em{First modification}}):}  Consider the operator $s_\chi \in \maL(\maE^1_\chi, \maE^2_\chi)$ which is the $(1,1)$-entry in the matrix decomposition of
$S_\chi: \maE^1_\chi \oplus (\maE^1_\chi)^\perp\rightarrow \maE^2_\chi\oplus (\maE^2_\chi)^\perp$.
It satisfies the following properties for any $f\in C(X, A_\chi)$:
\begin{enumerate}
\item $s^*_\chi\what\pi^\chi_2(f)s_\chi- \what\pi^\chi_1(f) \sim 0$, 
\item $\what\pi^\chi_2(f)(s_\chi s^*_\chi- \id) \sim 0$ and  $(s^*_\chi s_\chi- \id)\what\pi^\chi_1(f)\sim 0$,
\item $[s_\chi s^*_\chi,\what\pi^\chi_2(f)]\sim 0$ {{and $[s^*_\chi s_\chi,\what\pi^\chi_1(f)]\sim 0$}},
\item $(1-s^*_\chi s_\chi)$ and $(1-s_\chi s^*_\chi)$ are positive operators.
\end{enumerate}
{Recall that the notation  $A\sim B$ means that the difference $A-B$ is compact.} Therefore we can form the unitary $\hat{s}_\chi: \maE^1_\chi\oplus \maE^2_\chi\rightarrow \maE^2_\chi\oplus \maE^1_\chi$ given by the matrix
$$
\hat{s}_\chi:= \begin{bmatrix} s_\chi & (1-s_\chi s^*_\chi)^{1/2} \\ -(1-s^*_\chi s_\chi)^{1/2} & s^*_\chi \end{bmatrix}
$$
Properties $(1), (2)$ and $(3)$ above, imply that $\hat{s}_\chi$ intertwines the representations $\what\pi^\chi_1\oplus \what\pi^\chi_2$ and $\what\pi^\chi_2\oplus \what\pi^\chi_1$ up to compacts on $\maE^1_\chi\oplus \maE^2_\chi$ and $\maE^2_\chi\oplus \maE^1_\chi$,
respectively. Extending the unitary $\hat{s}_\chi$ by zero, we  get a partial isometry in $\maL_{C(X)}((H_1\oplus H_2)\otimes C(X), (H_2\oplus H_1)\otimes C(X))$, that we still denote by $\hat{s}_\chi$, {given by:
$$
\left(\begin{array}{cccc}
s_\chi & 0 & \sqrt{1- s_\chi s_\chi^*} & 0 \\
0 & 0 & 0 & 0\\
-\sqrt{1-s_\chi^*s_\chi} & 0 & s_\chi^* & 0 \\
0 & 0 & 0 & 0
\end{array}\right)
$$}
{{where we have written
$$
(H_1\oplus H_{2})\otimes C(X) = [\maE^1_\chi\oplus (\maE^1_\chi)^\perp ]\oplus [\maE^{2}_\chi\oplus (\maE^{2}_\chi)^\perp], \text{ and similarly for } (H_2\oplus H_{1})\otimes C(X).
$$
}}

\textbf{Step 3 ({\em{Second modification)}}:} For  $(x,g)\in G$, we denote by $V^i_{(x,g)}$  the unitary implementing the $G$-action on $H_i\otimes C(X)$. Then we define an operator $\hat{S} \in \maL_{C(X)}((H_1\oplus H_2)\otimes C(X), \ell^2\Gamma\otimes (H_2\oplus H_1)\otimes C(X))$ {{by setting  the following pointwise formula:
$$
\left[\hat{S}_x\right]_g:= \left(V^2_{(x,g)} \oplus V^1_{(x,g)}\right) \hat{s}_\chi \left(\what\pi_1(\chi^{1/2})\oplus \what\pi_2(\chi^{1/2})\right)
\left(V^1_{(x,g)} \oplus V^2_{(x,g)}\right)^{-1}.
$$}}
The operator $\hat{S}$ then satisfies the allowed properties in the statement Lemma \ref{finpropVRiso1}, as we prove it in Lemma \ref{finpropVRiso3} below. Therefore, the proof of Lemma \ref{finpropVRiso1} is now complete.
%
%
\end{proof}

{Properties of finite propagation operators as well as the notion of propagation index, are expanded in Appendix \ref{LocalizedOperators}, see in particular Definition \ref{LocalizedOp}.}

\begin{lemma}\label{finpropVRiso3}
The operator $\hat{S}$ satisfies the following properties:
\begin{enumerate}
 \item $\hat{S}$ is an isometry.
 \item $\hat{S}$ intertwines $\what\pi_1\oplus 0$ and $(\id_{\ell^2\Gamma}\otimes\what\pi_2)\oplus 0$ up to compacts.
 \item {The support of ${\hat S}$ is contained in the closure of $\maA_0=\bigcup_{g\in \Gamma} gW_\chi\times gW_\chi$ and hence has propagation index $\leq 2$. In particular,} if $Z/\Gamma$ is compact then $\hat{S}$ has finite propagation (bounded above by $\diam _Z(V_\chi)$).
\end{enumerate}
\end{lemma}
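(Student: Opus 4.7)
The plan is to verify (1)--(3) by unpacking the defining formula
\[
[\hat S_x]_g = \bigl(V^2_{(x,g)}\oplus V^1_{(x,g)}\bigr)\,\hat s_\chi\,P_\chi\,\bigl(V^1_{(x,g)}\oplus V^2_{(x,g)}\bigr)^{-1}, \quad P_\chi:=\what\pi_1(\chi^{1/2})\oplus\what\pi_2(\chi^{1/2}),
\]
and combining four ingredients: (i) the partial-isometry relations $\hat s_\chi^*\hat s_\chi=\mathrm{proj}_{\maE^1_\chi\oplus\maE^2_\chi}$ and $\hat s_\chi\hat s_\chi^*=\mathrm{proj}_{\maE^2_\chi\oplus\maE^1_\chi}$; (ii) the fact that $\chi^{1/2}\in A_\chi$, so $P_\chi$ has image in $\maE^1_\chi\oplus\maE^2_\chi$; (iii) equivariance of the representations $\what\pi_i$ together with the cutoff identity $\sum_{g\in\Gamma}g\cdot\chi=1$; (iv) the compact-intertwining $\hat s_\chi^*(\what\pi_2(a)\oplus 0)\hat s_\chi-(\what\pi_1(a)\oplus 0)\in\maK$ for $a\in A_\chi$, from Step 1 of Lemma \ref{finpropVRiso1}.

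For (3), ingredient (ii) shows that $\hat s_\chi P_\chi$ has support in $V_\chi\times V_\chi$; conjugation by the unitary $V^i_{(x,g)}$ then translates it to $gV_\chi\times gV_\chi\subset gW_\chi\times gW_\chi$, and summing over $g$ gives $\Supp(\hat S)\subset\overline{\bigcup_{g\in\Gamma}gW_\chi\times gW_\chi}$. In the cocompact case $V_\chi$ is precompact, whence the finite-propagation bound $\leq\diam_Z(V_\chi)$ follows.

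For (1), I would use (i) and (ii) to reduce $P_\chi\hat s_\chi^*\hat s_\chi P_\chi$ to $P_\chi^2=\what\pi_1(\chi)\oplus\what\pi_2(\chi)$, which gives
\[
[\hat S_x]_g^*[\hat S_x]_g = \bigl(V^1_{(x,g)}\oplus V^2_{(x,g)}\bigr)\bigl(\what\pi_1(\chi)\oplus\what\pi_2(\chi)\bigr)_{gx}\bigl(V^1_{(x,g)}\oplus V^2_{(x,g)}\bigr)^{-1} = \what\pi_1(g\cdot\chi)_x\oplus\what\pi_2(g\cdot\chi)_x
\]
by equivariance; summing with (iii) collapses to the identity, so $\hat S^*\hat S=\id$.

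For (2), the intertwining up to compacts, which I expect to be the main obstacle, I would first reduce by norm-density to $f\in C_c(X\times Z)$. For such $f$, by properness of the $\Gamma$-action on $Z$ only finitely many $g\in\Gamma$ satisfy $gV_\chi\cap\Supp_Z f\neq\emptyset$, hence the sum $\sum_g [\hat S]_g^*(\what\pi_2(f)\oplus 0)[\hat S]_g$ is finite. For each contributing $g$, equivariance rewrites the $g$-term as $(V^1\oplus V^2)\,P_\chi\hat s_\chi^*(\what\pi_2(f^g)\oplus 0)\hat s_\chi P_\chi\,(V^1\oplus V^2)^{-1}$ at fiber $gx$, with $f^g$ the $g^{-1}$-translate of $f$. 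Applying (iv) to the element $\chi^{1/2}\in A_\chi$ yields the commutator relation $\hat s_\chi P_\chi - Q_\chi\hat s_\chi\in\maK$ where $Q_\chi:=\what\pi_2(\chi^{1/2})\oplus\what\pi_1(\chi^{1/2})$; this lets me rewrite $P_\chi\hat s_\chi^*(\what\pi_2(f^g)\oplus 0)\hat s_\chi P_\chi$ as $\hat s_\chi^*(\what\pi_2(\chi f^g)\oplus 0)\hat s_\chi$ modulo compacts, using $Q_\chi(\what\pi_2(f^g)\oplus 0)Q_\chi=\what\pi_2(\chi f^g)\oplus 0$ and the centrality of $\chi$. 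Since $\chi f^g\in A_\chi$, a second application of (iv) gives $\what\pi_1(\chi f^g)\oplus 0$ modulo compacts. Equivariance and the cutoff identity then collapse the principal sum to $\what\pi_1(f)\oplus 0$, while the finitely many compact errors aggregate into a compact operator. The main obstacle is precisely this compactness bookkeeping: a naive sum over all of $\Gamma$ would produce an infinite series of compacts, and it is only the reduction to compactly-supported $f$ together with properness of the action that makes the summation visibly finite.
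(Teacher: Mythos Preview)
Your proposal is correct and follows essentially the same route as the paper: part (1) via $\hat s_\chi^*\hat s_\chi P_\chi=P_\chi$ together with equivariance and the cutoff identity $\sum_g g\chi=1$; part (2) via reduction to compactly supported $f$, a finite sum by properness, the compact commutator $\hat s_\chi P_\chi-Q_\chi\hat s_\chi\in\maK$, and the intertwining property of $\hat s_\chi$; part (3) via the support of $\hat s_\chi P_\chi$ lying in $V_\chi\times V_\chi$. Two small points of precision: the paper works with $f\in C(X,A_c)$ where $A_c=C_c(Z)A$ (your $C_c(X\times Z)$ is the special case $A=C_0(Z)$), and your ingredient (iv) as stated is a consequence of the full intertwining $\hat s_\chi(\what\pi_1^\chi\oplus\what\pi_2^\chi)\sim(\what\pi_2^\chi\oplus\what\pi_1^\chi)\hat s_\chi$ established in Step~2 of Lemma~\ref{finpropVRiso1} rather than something coming directly from Step~1.
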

\begin{proof}

%
%

(1) {Since $\hat{s}_\chi$ is an isometry in restriction to the range of $\what\pi_1(\chi^{1/2})\oplus \what\pi_2(\chi^{1/2}))$, a straightforward verification using  the relation $\sum_{ g\in \Gamma} g^*\chi=1_Z$ shows that ${\hat S}$ is an isometry.}

(2)
It suffices to check this condition for elements  $f$ in $C(X, A_c)$ where $A_c:= C_c(Z)A\subset A$. {Then, using the previously listed properties of $\hat{s}_\chi$, we have}:
\begin{eqnarray*}
&&{[\hat{S}^*\left(\id_{\ell^2\Gamma}\otimes \what\pi_2(f)\oplus 0\right)\hat{S}]_x}\\
&=& \sum_{g\in \Gamma} (V^1_{(x,g)} \oplus V^2_{(x,g)}) (\pi_1(\chi^{1/2})\oplus \pi_2(\chi^{1/2})) \hat{s}_\chi^* ((V^2)^{-1}_{(x,g)}\oplus (V^1)^{-1}_{(x,g)})(\what\pi_2(f)\oplus 0)\\
&&  (V^2_{(x,g)} \oplus V^1_{(x,g)})\hat{s}_\chi (\pi_1(\chi^{1/2})\oplus \pi_2(\chi^{1/2}))
((V^1)^{-1}_{(x,g)} \oplus (V^2)^{-1}_{(x,g)})\\
&=& \sum_{g\in \Gamma} (V^1_{(x,g)} \oplus V^2_{(x,g)}) (\pi_1(\chi^{1/2})\oplus \pi_2(\chi^{1/2})) (\hat{s}_\chi^* (\what\pi_2(g^*f)\oplus 0)\hat{s}_\chi)\\
&& (\pi_1(\chi^{1/2})\oplus \pi_2(\chi^{1/2})) ((V^1)^{-1}_{(x,g)} \oplus (V^2)^{-1}_{(x,g)})\\
&\sim& \sum_{g\in \Gamma} (V^1_{(x,g)} \oplus V^2_{(x,g)}) (\pi_1(\chi^{1/2})\oplus \pi_2(\chi^{1/2})) (\hat{s}_\chi^* (\what\pi_2(\chi^{1/2} g^*f)\oplus 0)\hat{s}_\chi  ((V^1)^{-1}_{(x,g)} \oplus (V^2)^{-1}_{(x,g)})
\end{eqnarray*}
The last equivalence is a consequence of the fact that $\hat{s}_\chi$ commutes up to compacts with $\pi_1(\chi^{1/2})\oplus \pi_2(\chi^{1/2})$. {Recall that $\chi^{1/2}$ belongs to $C_0(\chi)$ and hence to $A_\chi$. Therefore} we deduce
\begin{eqnarray*}
&&{[\hat{S}^*\left(\id_{\ell^2\Gamma}\otimes \what\pi_2(f)\oplus 0\right)\hat{S}]_x}\\
&\sim& \sum_{g\in \Gamma} (V^1_{(x,g)} \oplus V^2_{(x,g)}) (\pi_1(\chi^{1/2})\oplus \pi_2(\chi^{1/2}))  (\what\pi_1(\chi^{1/2} g^*f)\oplus 0)((V^1)^{-1}_{(x,g)} \oplus (V^2)^{-1}_{(x,g)})\\
&\sim& \sum_{g\in \Gamma} (V^1_{(x,g)} \oplus V^2_{(x,g)}) (\pi_1(\chi^{1/2})\oplus \pi_2(\chi^{1/2})) (\what\pi_1(g^*f)\oplus 0)(\pi_1(\chi^{1/2})\oplus \pi_2(\chi^{1/2})) ((V^1)^{-1}_{(x,g)} \oplus (V^2)^{-1}_{(x,g)})\\
&=& (\what\pi_1(f)\oplus 0) \sum_{g\in \Gamma} (V^1_{(x,g)} \oplus V^2_{(x,g)}) (\pi_1(\chi)\oplus \pi_2(\chi)) ((V^1)^{-1}_{(x,g)} \oplus )V^2)^{-1}_{(x,g)})\\
&=& (\what\pi_1(f)\oplus 0)
\end{eqnarray*}
The above computation is legal because the number of elements $g\in \Gamma$ such that
$$
\Supp(g^*f)\cap ({X\times }{\Supp(\chi)}) \neq \emptyset
$$
is finite, due to the properness of the $\Gamma$-action and the fact that $\chi$ is a cut-off function. {Indeed, we know from the very definition of $\chi$ that for any compact subspace $K$ of $Z$, the subset $\{g\in \Gamma\vert \Supp (\chi)\cap g K\neq \emptyset\}$ is finite, see for instance \cite{TuHyper}.}

(3)  Assume now that ${{W_1}}$ and ${{W_2}}$ are two open subspaces of $Z$ such that $W_1\times W_2$ does not intersect any subspace of $Z^2$ of the form $gW_\chi \times gW_\chi$, where $g$ runs over $\Gamma$, then for $a_i\in C_0(W_i)A$, we can compute
{\begin{eqnarray*}
\hspace{-0cm}&&\left[(\id_{\ell^2\Gamma}\otimes\pi_2(a_2)\oplus 0)\hat{S}(\pi_1(a_1)\oplus 0)\right]_g \\
&=&\left(\pi_2(a_2)\oplus 0\right)\left(V^2_{(x,g)} \oplus V^1_{(x,g)}\right)\hat{s}_\chi \left(\pi_1(\chi^{1/2})\oplus \pi_2(\chi^{1/2})\right)
\left(V^1_{(x,g)} \oplus V^2_{(x,g)}\right)^{-1} \left(\pi_1(a_1)\oplus 0\right)\\
&=&\left(V^2_{(x,g)} \oplus V^1_{(x,g)}\right)\left(\pi_2(g^{-1}a_2)\oplus 0\right)\hat{s}_\chi \left(\pi_1(\chi^{1/2})\oplus \pi_2(\chi^{1/2})\right)
\left(\pi_1(g^{-1}a_1)\oplus 0\right)\left(V^1_{(x,g)} \oplus V^2_{(x,g)}\right)^{-1}\\
&=&\left(V^2_{(x,g)} \oplus V^1_{(x,g)}\right)\left(\pi_2(g^{-1}a_2)\oplus 0\right)\hat{s}_\chi \left(\pi_1(\chi^{1/2} g^{-1}a_1)\oplus 0\right)
\left(V^1_{(x,g)} \oplus V^2_{(x,g)}\right)^{-1}
\end{eqnarray*}}
Therefore, we see that if for a given $g\in \Gamma$, we have $\chi^{1/2} g^{-1}a_1$ is non-zero then $\Supp (a_1)\cap g\Supp (\chi) \neq \emptyset$. But then by  hypothesis we know that since $W_1$ and $W_2$ are open we also have
$$
W_1\times W_2 \bigcap g\Supp (\chi) \times g\Supp(\chi) = \emptyset,
$$
and hence  necessarily $\Supp (a_2)\cap g\Supp (\chi) = \emptyset$, say that $\Supp (g^{-1}a_2) \cap \Supp (\chi)=\emptyset$. This in turn implies that
$$
\left(\pi_2(g^{-1}a_2)\oplus 0\right)\hat{s}_\chi = 0 \text{ since  the range of }\hat{s}_\chi\text{ is contained in }\maE^2_\chi\oplus \maE^1_\chi.
$$
Therefore we conclude that the operator
$(\id_{\ell^2\Gamma}\otimes\pi_2(a_2)\oplus 0)\hat{S}(\pi_1(a_1)\oplus 0)$ is trivial.

If we assume  that $Z/\Gamma$ is compact and that $Z$ is a metric-proper space with the above properties, then setting $\kappa:= \diam_{{Z}}(\Supp(\chi))$ which is now a finite positive number,
we can deduce by the same calculation  that whenever $a_1, a_2\in C_c(Z)A$ are such that $d (\Supp(a_1), \Supp(a_2))> \kappa$, one has by the $\Gamma$-invariance of the distance $d$ the same relation
$$
(\id_{\ell^2\Gamma}\otimes\pi_2(a_2)\oplus 0)\hat{S}(\pi_1(a_1)\oplus 0) = 0.
$$

%
%
%
%
\end{proof}

\begin{corollary}\label{balance}
There exists an isometry
$S\in \maL_{C(X)}(\ell^2\Gamma^\infty\otimes (H_1\oplus H_2)\otimes C(X), \ell^2\Gamma^\infty\otimes (H_2\oplus H_1)\otimes C(X))$  such that
$$
S^*(\what\pi_2^\infty(f)\oplus 0)S -(\what\pi_1^\infty(f)\oplus 0) \in \maK_{C(X)} \left([\ell^2\Gamma^\infty\otimes (H_1\oplus H_2)]\otimes C(X)\right).
$$
Moreover, {we can ensure that the operator $S$ is localized with support contained  in the closure of $\maA_0=\bigcup_{g\in \Gamma} gW_\chi\times g W_\chi$ and hence with propagation index $\leq 2$. In particular,} if $Z/\Gamma$ is compact then $\hat{S}$ has finite propagation (bounded above by $\diam(V_\chi)$).
\end{corollary}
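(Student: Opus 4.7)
The plan is to assemble $S$ directly from the ``unbalanced'' isometry $\hat S$ produced by Lemma \ref{finpropVRiso1}, by amplifying with $\ell^2\Gamma^\infty$ on the left and then absorbing the leftover $\ell^2\Gamma$ factor in the codomain by means of an abstract Hilbert space isomorphism. Note that $\Gamma$-invariance of $S$ is not required here, so a purely Hilbert-space absorption is admissible; the equivariance question is postponed to Theorem \ref{GequivVRiso2}.

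First I would invoke Lemma \ref{finpropVRiso1} to produce an isometry
$$
\hat S\in \maL_{C(X)}((H_1\oplus H_2)\otimes C(X),\;(H_2\oplus H_1)\otimes \ell^2\Gamma\otimes C(X))
$$
that essentially intertwines $\what\pi_1\oplus 0$ with $(\what\pi_2\otimes \id_{\ell^2\Gamma})\oplus 0$ and whose support with respect to $\pi_i$ lies in $\overline{\maA_0}$. Tensoring on the left with $\id_{\ell^2\Gamma^\infty}$ gives
$$
\id_{\ell^2\Gamma^\infty}\otimes \hat S:\; \ell^2\Gamma^\infty\otimes (H_1\oplus H_2)\otimes C(X)\longrightarrow \ell^2\Gamma^\infty\otimes (H_2\oplus H_1)\otimes \ell^2\Gamma\otimes C(X),
$$
which still intertwines the amplified representations modulo compacts but has an asymmetric target. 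To eliminate this asymmetry I would fix any unitary $V:\ell^2\Gamma^\infty\otimes \ell^2\Gamma\xrightarrow{\cong}\ell^2\Gamma^\infty$ of separable infinite-dimensional Hilbert spaces, let $\sigma$ denote the canonical flip that moves the trailing $\ell^2\Gamma$ past $(H_2\oplus H_1)$, and set
$$
S:=\bigl(V\otimes \id_{(H_2\oplus H_1)\otimes C(X)}\bigr)\circ \sigma\circ \bigl(\id_{\ell^2\Gamma^\infty}\otimes \hat S\bigr).
$$

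To verify the intertwining I would observe that both $V$ and $\sigma$ act only on the $\ell^2\Gamma^\infty\otimes \ell^2\Gamma$ factors, whereas $\what\pi_2^\infty(f)\oplus 0$ acts only on $(H_2\oplus H_1)$, so that $V\otimes \id$ and $\sigma$ commute with the relevant multiplication operator. A short bookkeeping of factors shows that conjugating $\what\pi_2^\infty(f)\oplus 0$ by $(V\otimes \id)\circ \sigma$ recovers exactly $\id_{\ell^2\Gamma^\infty}\otimes ((\what\pi_2(f)\otimes \id_{\ell^2\Gamma})\oplus 0)$, and the intertwining modulo compacts provided by Lemma \ref{finpropVRiso1} then yields $S^*(\what\pi_2^\infty(f)\oplus 0)S-(\what\pi_1^\infty(f)\oplus 0)\in \maK_{C(X)}$. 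For the support statement, since the support of an operator with respect to $\pi_i$ only records interactions with elements of $C(X,A)$ (which act on $H_i$), and since the amplification, the flip $\sigma$, and the Hilbert-space unitary $V$ all act as identity on the $(H_i)\otimes C(X)$ portion, the support of $S$ agrees with that of $\hat S$; hence it lies in $\overline{\maA_0}$ with propagation index $\le 2$, and becomes finite propagation bounded by $\diam_Z(V_\chi)$ in the cocompact case.

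The principal subtlety I expect is verifying that the non-canonical absorption unitary $V$ truly does not disturb either the intertwining or the localization of the support. This is essentially automatic because $V$ operates only on tensor factors disjoint from those on which $\pi_i$ lives, but careful bookkeeping of the flips is needed to confirm that the compact-operator error remains compact after the rearrangement, and that the identification of the new target with $\ell^2\Gamma^\infty\otimes (H_2\oplus H_1)\otimes C(X)$ is compatible with the Hilbert $C(X)$-module structure.
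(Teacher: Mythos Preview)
There is a genuine gap in your argument at the amplification step. You take the isometry $\hat S$ from Lemma \ref{finpropVRiso1}, which satisfies
\[
\hat S^*\bigl((\what\pi_2(f)\otimes \id_{\ell^2\Gamma})\oplus 0\bigr)\hat S-(\what\pi_1(f)\oplus 0)\in \maK_{C(X)}\bigl((H_1\oplus H_2)\otimes C(X)\bigr),
\]
and then tensor on the left by $\id_{\ell^2\Gamma^\infty}$, asserting that the result ``still intertwines the amplified representations modulo compacts.'' But the difference becomes $\id_{\ell^2\Gamma^\infty}\otimes K$ with $K$ compact, and an operator of the form $\id_{\ell^2\Gamma^\infty}\otimes K$ is \emph{not} compact on $\ell^2\Gamma^\infty\otimes (H_1\oplus H_2)\otimes C(X)$ unless $K=0$, since $\ell^2\Gamma^\infty$ is infinite-dimensional. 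So the intertwining modulo compacts is lost exactly at the step where you amplify.

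The paper avoids this by applying Lemma \ref{finpropVRiso1} directly to the \emph{amplified} data: one replaces $H_i$ by $\ell^2\Gamma^\infty\otimes H_i$ and $\what\pi_i$ by $\what\pi_i^\infty=\id_{\ell^2\Gamma^\infty}\otimes\what\pi_i$, which are again fiberwise ample $\Gamma$-equivariant $C(X)$-representations. Lemma \ref{finpropVRiso1} then produces an isometry
\[
\hat S\in \maL_{C(X)}\bigl(\ell^2\Gamma^\infty\otimes (H_1\oplus H_2)\otimes C(X),\;\ell^2\Gamma\otimes \ell^2\Gamma^\infty\otimes (H_2\oplus H_1)\otimes C(X)\bigr)
\]
that already intertwines $\what\pi_1^\infty\oplus 0$ and $(\id_{\ell^2\Gamma}\otimes\what\pi_2^\infty)\oplus 0$ modulo compacts on the large module. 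After that, your absorption step via a unitary $u_\infty:\ell^2\Gamma\otimes\ell^2\Gamma^\infty\to\ell^2\Gamma^\infty$ is exactly what the paper does, and your remarks about the support being unaffected by this unitary are correct. The fix is therefore minimal: invoke Lemma \ref{finpropVRiso1} for $\what\pi_i^\infty$ rather than for $\what\pi_i$ followed by a blind tensor with $\id_{\ell^2\Gamma^\infty}$.
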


\begin{proof}
From Lemma \ref{finpropVRiso1}, we deduce an isometry
$\hat{S} \in  \maL_{C(X)}(\ell^2\Gamma^\infty\otimes(H_1\oplus H_2)\otimes C(X),\ell^2\Gamma \otimes\ell^2\Gamma^\infty\otimes (H_2\oplus H_1)\otimes C(X))$,
such that
$\hat{S}$ intertwines the representations $\what\pi_1^\infty\oplus 0$ and $(\id_{\ell^2\Gamma}\otimes \what\pi^\infty_2)\oplus 0$ up to compacts.
Consider a unitary $u_\infty:\ell^2\Gamma\otimes \ell^2\Gamma^\infty \rightarrow \ell^2\Gamma^\infty$.
The isometry $S_1:= (u_\infty\otimes \id_{(H_2\oplus H_1)\otimes C(X)})\circ \hat{S}$ then intertwines $\what\pi_1^\infty\oplus 0$ and $\what\pi^\infty_2\oplus 0$, up to compacts and still has
the same support as ${\hat S}$. In particular, it has uniform finite propagation when $Z/\Gamma$ is compact.
\end{proof}

We are now ready to prove Theorem \ref{NCVRisometry2}.

\begin{proof}(of Theorem \ref{NCVRisometry2})
Replacing, in the statement of Corollary \ref{balance}, $H_i$ by $\ell^2\N\otimes H_i$ and $\what\pi_i$ by $\what\pi_i^\infty = \id_{\ell^2\N}\otimes \what\pi_i$ for $i=1,2$, we obtain an {isometry with the prescribed support condition  (finite-propagation when $Z/\Gamma$ is compact and $Z$ is metric-proper)}
$$
S_0 \in \maL_{C(X)}\left((\ell^2\Gamma^\infty)^\infty\otimes (H_1\oplus H_2)\otimes C(X), (\ell^2\Gamma^\infty)^\infty\otimes (H_2\oplus  H_1)\otimes C(X)\right)
$$
such that for any $f\in C(X, A)$:
$$
S_0^*( (\what\pi_2^\infty)^\infty(f)\oplus 0)S_0 -(\what\pi_1^\infty)^\infty (f)\oplus 0) \in \maK_{C(X)}\left( (\ell^2\Gamma^\infty)^\infty \otimes (H_1\oplus H_2)\otimes C(X)\right).
$$
{The support of $S_0$ is more precisely contained in $\bigcup_{g\in \Gamma} \Supp (g\chi)\times \Supp (g\chi)$ and hence $S_0$ has finite propagation in the metric and cocompact case. Indeed, in this case and since the distance is $\Gamma$-invariant, the propagation is $\leq$ the diameter of $\Supp (\chi)$.} Let $r_\infty: \ell^2\N\otimes \ell^2\N\rightarrow \ell^2\N$ be a unitary. Composing $S_0$ with $r_\infty\otimes \id_{\ell^2\Gamma\otimes (H_1\oplus H_2)\otimes C(X)}$, we get an isometry
$$
S_1:= \left(r_\infty\otimes \id_{\ell^2\Gamma\otimes (H_1\oplus H_2)\otimes C(X)}\right)\circ S_0 \in \maL_{C(X)}\left((\ell^2\Gamma^\infty)^\infty\otimes (H_1\oplus H_2)\otimes C(X), \ell^2\Gamma^\infty\otimes (H_2\oplus  H_1)\otimes C(X)\right)
$$ which  satisfies
$$
S_1^*\left(\what\pi_2^\infty(f)\oplus 0\right)S_1 -\left((\what\pi_1^\infty)^\infty(f)\oplus 0\right) \in \maK_{C(X)}\left((\ell^2\Gamma^\infty)^\infty\otimes (H_1\oplus H_2)\otimes C(X)\right)
$$
and has the same support.

Consider the  operator
$
R_1:\ell^2\N\otimes \ell^2\Gamma^\infty\otimes (H_1\oplus H_2)\rightarrow \ell^2\N\otimes \ell^2\Gamma^\infty\otimes (H_1\oplus H_2)
$
defined by the following formula:
$$
R_1(h_1\oplus h_2\oplus\cdots)= 0\oplus h_1\oplus h_2\oplus\cdots
$$
Then $R_1$  induces a $C(X)$-linear isometry on $\ell^2\N\otimes \ell^2\Gamma^\infty\otimes (H_1\oplus H_2)\otimes C(X)$.
Consider also the operator $R_2: \ell^2\N\otimes \ell^2\Gamma^\infty\otimes (H_1\oplus H_2)\rightarrow \ell^2\Gamma^\infty\otimes (H_1\oplus H_2)$
defined by the formula:
$$
R_2(h_1\oplus h_2\oplus\cdots)= h_1
$$
Then $R_2$ induces a $C(X)$-adjointable co-isometry
$$
R_2\in \maL_{C(X)}\left((\ell^2\Gamma^\infty)^\infty \otimes (H_1\oplus H_2) \otimes C(X), \ell^2\Gamma^\infty\otimes (H_1\oplus H_2)\otimes C(X)\right).
$$
Notice that we have the convenient relations
$$
{R_2R_1=0\text{ and }}R_1R_1^*+ R_2^*R_2= \id_{\ell^2\N\otimes\ell^2\Gamma^\infty\otimes (H_1\oplus H_2) \otimes C(X)}.
$$
We are  now in position to define the unitary
$$
S \in \maL_{C(X)}\left(\ell^2\Gamma^\infty\otimes (H_2\oplus H_1)\otimes C(X), \ell^2\Gamma^\infty\otimes \left((H_2\oplus H_1)\oplus (H_1\oplus H_2)\right)\otimes C(X)\right)
$$
by using the following formula:
$$
S:={\left(\begin{array}{cc} I- S_1S_1^*+S_1R_1^*S_1^* \\  R_2S_1^*\end{array}\right)}.
$$
It is a straightforward computation to show that $S$ is a unitary  and that it intertwines $\what\pi_2^\infty\oplus 0$ and $\what\pi_2^\infty\oplus 0 \oplus\what\pi_1^\infty\oplus 0$ up to compacts. The operator $R_1$ commutes with the representation $\id_{\ell^2\N}\otimes \what\pi^\infty_1\oplus 0$, and $R_2$ intertwines (exactly) the representations
$\what\pi_1^\infty\oplus 0$ and $(\id_{\ell^2\N}\otimes \what\pi_1^\infty)\oplus 0$, and therefore have {support contained in the diagonal of $Z^2$. Whence, the operator $S$ is localized by composition with the propagation  index which is $\leq 7$. {We refer the reader again to Appendix \ref{LocalizedOperators} for the properties of localized operators.}
Again in the cocompact case and with the $\Gamma$-invariant distance on $Z$, we see that the operator $S$ has finite propagation which is $\leq$ to the diameter of the compact space $\cup_{g\vert g\Supp(\chi)\cap\Supp(\chi)\neq\emptyset}\;   \Supp (g\chi)$. Indeed, this is a finite union by definition of the cutoff function.}

%
A similar unitary $T$ exists between $\what\pi_1^\infty\oplus 0$ and $\what\pi_1^\infty\oplus 0 \oplus\what\pi_2^\infty\oplus 0$ intertwining them up to compacts {with the same condition on its support and hence with finite propagation in the cocompact metric case}. After applying a suitable flip
unitary $\alpha$ in the target space of these unitaries, which exchanges the first two and the last two factors (i.e. $\alpha$ is given by $(u_1, u_2, u_3, u_4)\mapsto (u_3, u_4, u_1, u_2)$),
and then taking the composition $U=T^*\alpha^*S$, one gets the desired
unitary which intertwines $\what\pi_1^\infty\oplus 0$ and $\what\pi_2^\infty\oplus 0$ up to compacts. One can check as well  that the flip unitary $\alpha$ has support inside the diagonal of $Z^2$. {In conclusion, the unitary $U$ is also localized with the same propagation index, which is hence $\leq 7$.}
As a consequence in the cocompact case with the $\Gamma$-invariant distance, we conclude again by an easy verification that the unitary $U$ has finite propagation as desired.
%
\end{proof}

Let us now take into account the action of our discrete countable group  $\Gamma$  by homeomorphisms on $X$. Recall that  $A$ is a proper $\Gamma$-algebra over $Z$ and that $C_0(Z)$ maps inside the center of $A$ itself.  We denote as before by $G$ the action groupoid $X\rtimes \Gamma$ or its space of arrows, since no confusion can occur.
A specific unitary representation of $\Gamma$ is the (right) regular representation $\rho$ in the Hilbert space $\ell^2\Gamma$, which can be tensored by the identity of
$\ell^2\N$ to get the unitary representation $\rho^\infty$ of $\Gamma$ in $\ell^2\Gamma^\infty=\ell^2\Gamma\otimes \ell^2\N$. {Recall that given the proper $\Gamma$-space, we always choose a uniform cutoff function $\chi\in C(Z)$, and that   the first (and also the second) projection $\Gamma_\chi^{(1)}\to \Gamma$ is proper.} We are now in position to state Theorem \ref{GequivVRiso2}, that we restate using the groupoid language, so as to fit with possible generalizations, as follows:

\medskip

\begin{theorem}\label{GequivVRiso1}
Assume that the action groupoid $G=X\rtimes \Gamma$ acts  properly on the $G$-space $Y=X\times Z$, meaning here that $\Gamma$ acts  properly on $Z$ with a {fixed uniform cutoff function $\chi\in C(Z)$.} Let $\what\pi_1$ and $\what\pi_2$ be two fiberwise ample $G$-equivariant representations of
$C(X, A)$ in the Hilbert $G$-modules $H_1\otimes C(X)$  and $H_2\otimes C(X)$ respectively.
Then, identifying each $\what\pi_i$ with the trivially extended representation $\left(\begin{array}{cc} \what\pi_i & 0\\ 0 & 0\end{array}\right)$ that is further tensored by the identity
of $\ell^2\Gamma^\infty$, there exists a $G$-invariant unitary operator
$$
W\in \maL_{C(X)} \left(\left[ \ell^2\Gamma^\infty\otimes (H_1\oplus H_2)\right] \otimes C(X), \left[\ell^2\Gamma^\infty \otimes (H_2\oplus H_1) \right]\otimes C(X)\right),
$$
 such that for any $\varphi\in C(X, A)$
$$
  W^*\what\pi_2 (\varphi) W - \what\pi_1(\varphi) \in \maK_{C(X)} \left( [\ell^2\Gamma^\infty\otimes (H_1\oplus H_2) ]\otimes C(X)\right).
  $$
Moreover, {we can ensure that the operator $W$ is localized with the propagation index $\leq 7$.
In particular,} if $Z/\Gamma$ is compact with the previous metric assumption on $Z$, then we can ensure that $W$  has finite propagation.
\end{theorem}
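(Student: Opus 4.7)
The plan is to revisit the non-equivariant proof of Theorem~\ref{NCVRisometry2} and reinforce every step so that $\Gamma$-invariance is produced alongside the already-achieved localized support with propagation index at most $7$. The crucial observation is that the construction in Step~3 of Lemma~\ref{finpropVRiso1} already yields, via cutoff averaging against $\chi$, an isometry that is \emph{automatically} $\Gamma$-invariant; and that the Kasparov-type shift manipulations used at the end of Theorem~\ref{NCVRisometry2} to upgrade such an isometry to a unitary only involve operators on the auxiliary $\ell^2\N$ factor (where the $\Gamma$-action is trivial) and are thus trivially $\Gamma$-equivariant.

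First I would observe that the isometry $\hat{S}$ produced by Lemma~\ref{finpropVRiso1} is already $\Gamma$-invariant by construction: the defining formula
$$
[\hat{S}_x]_g := \left(V^2_{(x,g)}\oplus V^1_{(x,g)}\right)\hat{s}_\chi \left(\pi_1(\chi^{1/2})\oplus \pi_2(\chi^{1/2})\right)\left(V^1_{(x,g)}\oplus V^2_{(x,g)}\right)^{-1}
$$
is a sum of $\Gamma$-translates designed to transform covariantly under the right regular representation of $\Gamma$ on $\ell^2\Gamma$. To promote Corollary~\ref{balance} to the equivariant setting, the only extra care needed is to pick the auxiliary unitary $u_\infty:\ell^2\Gamma\otimes \ell^2\Gamma^\infty\to \ell^2\Gamma^\infty$ to be $\Gamma$-equivariant. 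This is achievable via Fell absorption: the representation $\rho\otimes \rho^\infty$ is unitarily equivalent to $1\otimes \rho^\infty$ via an explicit Fell-type unitary, which provides the desired equivariant $u_\infty$. Consequently, the isometry $S$ supplied by Corollary~\ref{balance} becomes $\Gamma$-invariant without any loss in the localized support bound.

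Next, the Kasparov-shift manipulation from the proof of Theorem~\ref{NCVRisometry2} upgrades such an isometry to a unitary through the matrix construction
$$
S := \begin{pmatrix} I - S_1 S_1^* + S_1 R_1^* S_1^* \\ R_2 S_1^* \end{pmatrix},
$$
followed by an analogous construction for $T$, a flip $\alpha$, and the composition $U = T^*\alpha^* S$. Here $R_1, R_2$ and $r_\infty$ act only on the $\ell^2\N$ factor where $\Gamma$ acts trivially, and the flip $\alpha$ permutes tensor factors without using the $\Gamma$-action, so each ingredient preserves $\Gamma$-equivariance. Starting from the $\Gamma$-invariant isometry of Corollary~\ref{balance} and running this upgrade verbatim thus produces a $\Gamma$-invariant unitary $W$ with the desired intertwining property modulo compacts. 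The propagation bookkeeping of Theorem~\ref{NCVRisometry2} applies unchanged: the shifts $R_1,R_2,\alpha,r_\infty$ are supported on the diagonal of $Z^2$, so composition does not increase the propagation index beyond the bound of $7$ inherited from $\hat{S}$. In the cocompact metric case this localized support translates into a uniform finite-propagation bound, controlled by the diameter of $\bigcup_{g\,:\,g\Supp(\chi)\cap\Supp(\chi)\neq\emptyset}g\Supp(\chi)$.

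I expect the delicate step to be the verification that $u_\infty$ (and indeed every auxiliary isomorphism between stabilized Hilbert modules used in the chain) can be chosen $\Gamma$-equivariantly: this is where the author's warning that ``standard averagings only converge in the strong topology and hence tend to violate the desired intertwining up to compacts property'' applies. The whole point of the cutoff-averaging formula for $\hat{S}$ is to bypass this defect by trading naive averaging for an averaging weighted by $\chi$, whose local finiteness (ensured by the uniform properness of the $\Gamma$-action on $Z$ and the adapted choice of $\chi$) guarantees both strong convergence and compatibility with compact perturbations, as already verified in Lemma~\ref{finpropVRiso3}. Once this Fell-absorption observation is in place and one checks equivariance tensor-factor by tensor-factor, the rest of the proof is essentially a re-reading of Theorem~\ref{NCVRisometry2} with explicit bookkeeping of the $\Gamma$-action.
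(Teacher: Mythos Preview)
Your approach is correct but follows a genuinely different route from the paper's. The paper does \emph{not} track $\Gamma$-equivariance through Lemma~\ref{finpropVRiso1}, Corollary~\ref{balance}, and Theorem~\ref{NCVRisometry2}; instead it ``forgets'' the $\Gamma$-action on $\ell^2\Gamma^\infty$, invokes Theorem~\ref{NCVRisometry2} as a black box to obtain a non-equivariant localized unitary $S$, and then performs a \emph{second} cutoff-weighted averaging
$W=\sum_{g\in\Gamma}(\id\otimes U_g)\circ g\bigl(S\,\pi_1(\sqrt{\chi})\bigr)$
using a Fell-trick family of isometries $(U_g)_{g\in\Gamma}$ on $\ell^2\Gamma^\infty$ satisfying $U_g^*U_{g'}=\delta_{g,g'}\id$, $\sum_gU_gU_g^*=\id$, and $U_{g'g}=\rho_g^\infty U_{g'}\rho_{g^{-1}}^\infty$. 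It is precisely in showing that $\what\pi_2(f)W$ is a \emph{finite} sum (hence that the intertwining-mod-compacts survives and that $\Supp(W)\subset\maA_7$) that the paper invokes uniform properness. Your route, by contrast, observes that the averaging in Step~3 of Lemma~\ref{finpropVRiso1} already makes $\hat S$ equivariant (in fact for the left-regular representation on the new $\ell^2\Gamma$ factor, which one converts to the right-regular by the inversion unitary), and then propagates equivariance through by choosing $u_\infty$ via Fell absorption, all remaining ingredients $r_\infty,R_1,R_2,\alpha$ living on the $\ell^2\N$ factor where $\Gamma$ acts trivially. Your argument is more economical (no second averaging) and in fact sidesteps the one place where the paper uses uniform properness; the paper's argument is more modular, cleanly separating the non-equivariant PPV machinery from the equivariance upgrade and making the latter available as an independent device.
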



\begin{proof}
Since the extended representations (of the unitalization $C(X, A^+)$) are fiberwise ample (say homogeneous in the terminology used in \cite{PPV}),
by ``forgetting'' the right regular $\Gamma$-action on $\ell^2\Gamma^\infty$, from Theorem \ref{NCVRisometry2} we deduce again the existence of a unitary {that we rather denote in this proof by $S$ ($U$ will denote below another family of isometries) with support within  $\maA_7$ (so with
 finite propagation when $Z/\Gamma$ is compact and $Z$ is metric-proper):}
$$
S \in \maL_{C(X)} \left(\left[ \ell^2\Gamma^\infty\otimes (H_1\oplus H_2)\right] \otimes C(X), \left[\ell^2\Gamma^\infty \otimes (H_2\oplus H_1) \right]\otimes C(X)\right)
$$
such that {for any $f\in C(X, A^+)$},
$$
S^*\what{\pi}_2 (f) S - \what{\pi}_1(f) \in  \maK_{C(X)} \left( [\ell^2\Gamma^\infty\otimes (H_1\oplus H_2) ]\otimes C(X)\right).
$$
In particular this property holds for the restrictions of $\what\pi_1$ and $\what\pi_2$ to $C(X, A)$.

The unitary $U$ obtained in this way is of course a priori not $\Gamma$-invariant. To remedy this, we shall use a classical trick which allows to ``average''. Using Fell's trick, one can construct a family of   operators $(U_{g})_{g\in \Gamma}$ acting on $\ell^2(\Gamma)^\infty$,
such that (see for instance \cite{GWY16} or \cite{BR2}): 

\begin{itemize}
 \item for $g, g'\in \Gamma$, $U_{g}^*U_{g'} = \delta_{g,g'}\id_{\ell^2(\Gamma)^\infty}$, in particular each $U_g$ is an isometry;
 \item $\sum_{g\in \Gamma} U_{g}U_{g}^* = \id_{\ell^2(\Gamma)^\infty}$; and
 \item ($\Gamma$-equivariance) $U_{g' g}\;= \; \rho_{g}^\infty \; U_{g'}\; \rho_{g^{-1}}^\infty$ for any $(g,g')\in \Gamma^2$.
\end{itemize}

{Here of course $\rho$ is the right regular representation of $\Gamma$.
Recall the cutoff funtion $\chi\in C(Z)$ defined using the properness  of the $\Gamma$-action  on  $Z$} and which is compactly supported when $Z/\Gamma$ is assumed compact. We proceed now to  define the  allowed field $W_x: (H_1\oplus H_2)\otimes \ell^2(\Gamma)^\infty \longrightarrow (H_2\oplus H_1)\otimes \ell^2(\Gamma)^\infty$ or equivalently the corresponding operator $W$ obtained by the averaging trick.

Consider the dense submodule $\maE'_1$ of $\left[ \ell^2\Gamma^\infty\otimes (H_1\oplus H_2)\right] \otimes C(X)$  which is given by
$$
\maE'_1\;  := \; ({\what\pi_1} \oplus {\what\pi_2}) (C(X, A_c)) \left( \left[ \ell^2\Gamma^\infty\otimes (H_1\oplus H_2)\right] \otimes C(X) \right)
$$
In this notation, $A_c=C_c(Z) A$ as before, and ${\what\pi_i}$ is the original representation of $C(X, A)$ on $C(X)\otimes H_i$ that we have tensored with the identity in $\ell^2\Gamma^\infty$. We  similarly define $\maE'_2$.

Notice that, ${\what\pi}_i$ also denotes the extended representation of $C(X, A)$ in $\left[ \ell^2\Gamma^\infty\otimes (H_1\oplus H_2)\right] \otimes C(X)$
(resp. $\left[ \ell^2\Gamma^\infty\otimes (H_2\oplus H_1)\right] \otimes C(X)$) obtained as ${\what\pi}_i\oplus 0$ for $i=1,2$, respectively.
On the other hand,  an operator
$$
T\in \maL_{C(X)}\left( \left[ \ell^2\Gamma^\infty\otimes (H_1\oplus H_2)\right] \otimes C(X), \left[ \ell^2\Gamma^\infty\otimes (H_2\oplus H_1)\right] \otimes C(X)\right)
$$
is $G$-invariant if for any $g\in \Gamma$, we have {for $(x, g)\in X\times \Gamma$:
$$
T_x = (gT)_x := (V^2_{(x,g)}\oplus V^1_{(x,g)}
) T_{xg} (V^1_{(x,g)}\oplus V^2_{(x, g)})^{-1}.
$$}
Recall that $V^i$ denotes the extensions of the $X\rtimes\Gamma$-actions on $H_i\otimes C(X)$ by tensoring with the right regular representation of $\Gamma$, $\rho^\infty$ on $\ell^2\Gamma^\infty$.
So, {choosing a cutoff function $\chi$ as before with the extra property that the first projection $\Gamma_\chi^{(1)}\to \Gamma$ is proper}, we now replace $S$ by the (well defined) average operator
$$
W = \sum_{g\in \Gamma}(\id_{H_2\oplus H_1}\otimes U_{g}) \circ g \left(S \pi_{1} (\sqrt{\chi})\right).
$$
Here  $(U_g)_{g\in \Gamma}$ is the family of isometric operators on the Hilbert space $\ell^2(\Gamma)^\infty$ defined above. For $e\in \maE'_1$, we thus have defined
$$
W_x (e_x) := \sum_{g\in \Gamma} \left(\id_{H_2\oplus H_1}\otimes U_{g}\right) \, \left(V^2\oplus V^1\right)_{(x,g)}  S_{xg} \left(V^1\oplus V^2\right)_{(xg,g^{-1})} \pi_{1,x} (g^*\sqrt{\chi})  (e_x)
$$
The sum defining $W_x$ is then finite since for any $\varphi\in C_c(Z)$ that is viewed in $A$, we have
$$
(\pi_{1,x}\oplus 0) (g^*\sqrt{\chi})  (\pi_{1,x} \oplus \pi_{2, x}) (\varphi) = (\pi_{1,x} (\varphi \sqrt{g^*\chi})\oplus 0),
$$
and the number of $g\in \Gamma$ such that $\varphi \sqrt{g^*\chi}\neq 0$ is finite by the properness of the $\Gamma$-action on $Z$. Hence, ${W (e)}$ is well defined on the elements  $e\in \maE'_1$. Moreover, an easy inspection, using the properties of the family $(U_g)_{g\in \Gamma}$, shows that the relation $
W_x^* W_x = \id$ holds on   $\maE'_1$.

This shows that $W_x$ automatically extends to an isometry  between the corresponding Hilbert spaces that we still denote $W_x$.
Moreover, when $e=[\what\pi_1 (f)\oplus  \what\pi_2 (f) ]e_1$ with $f\in C(X, A_c)$,  there is a  finite subset $I_e$ of $\Gamma$, {which does not depend on the variable $x\in X$},  such that,
$$
W (e)_x= \sum_{g\in I_e} T_{g, x}(e_x)\quad \forall x\in X.
$$
Here each of the maps $x\mapsto T_{g, x} (e_x)$ and $x\mapsto T_{g, x}^* (e_x)$ is of course norm-continuous.
We thus end up with the adjointable isometry, still denoted $W$,
between the Hilbert modules $\left[(H_1\oplus H_2)\otimes \ell^2\Gamma^\infty\right]\otimes C(X)$  and $\left[(H_2\oplus H_1)\otimes \ell^2\Gamma^\infty\right]\otimes C(X)$ as announced. Notice that in the cocompact case, $W$ has finite propagation by construction.

Now, $W$ satisfies the following properties:
\begin{enumerate}
 \item $W^*\what\pi_{2}(f)W- \what\pi_{1}(f) \in \maK_{C(X)}\left([(H_1\oplus H_2)\otimes \ell^2(\Gamma)^\infty]\otimes C(X) \right)$, for any $f\in C(X, A_c)$;
  \item $W$ is $G$-invariant.
\end{enumerate}
Once these properties have been verified, a standard trick as in the proof of Theorem \ref{NCVRisometry2} using the direct sum of the representations allows to find in place of the isometry $W$,
a unitary which will also satisfy the same two properties. Note  (see the notation in the proof of Theorem \ref{NCVRisometry2}) that if the initial isometry $S_0$ is {$G$-invariant, then the operators $S_1$, as well as $R_1$ and $R_2$ appearing in the proof of Theorem \ref{NCVRisometry2} are all $G$-invariant}
by construction.

Regarding the first item, notice that we have for any $f\in C(X, A_c)$:
$$
\what\pi_{2}(f)W = \sum_{g\in \Gamma} \left(\id_{H_2\oplus H_1}\otimes U_{g}\right)\circ  g \left(\what\pi_{2}(g^{-1}f) S \pi_{1} (\sqrt{\chi})\right),
$$
But an easy inspection of the support of the operator $S$ {(which is contained in $\maA_7$), using that the action is uniformly proper,} we deduce that $\what\pi_{2}(g^{-1}f) S \pi_{1} (\sqrt{\chi})$ is only non-zero for a finite number of elements $g\in \Gamma$. {The similar statement holds for $W\what\pi_1(f)$. An ad hoc consequence is that the support of $W$ is also contained in $\maA_7$.} When $Z/\Gamma$ is compact,  this is more obvious since the operator $S$ has  finite propagation.  Hence the sum defining the operator $\what\pi_{2}(f) W$ is finite
independently of the test vector $e$ and  therefore makes sense in the uniform operator topology.

Therefore, we may compute using the $G$-equivariance of the representations $\what\pi_i$:
\begin{eqnarray*}
\what\pi_{2}(f)W&=& \sum_{g\in \Gamma}\left(\id_{H_2\oplus H_1}\otimes U_{g}\right)\circ  g \left(\what\pi_{2}(g^{-1}f) S \pi_{1} (\sqrt{\chi})\right)\\
& \sim &  \sum_{g\in \Gamma}\left(\id_{H_2\oplus H_1}\otimes U_{g}\right) \circ g \left(S \what\pi_{1} (g^{-1}f)  \pi_{1} (\sqrt{\chi})\right)\\
& = & \sum_{g\in \Gamma} \left(\id_{H_2\oplus H_1}\otimes U_{g}\right) \circ g \left(S  {\pi_{1} (\sqrt{\chi})}\right) \circ g \what\pi_{1} (g^{-1}f)\\
& = & W \what\pi_{1} (f).
\end{eqnarray*}
The sign $\sim$ again refers to equality modulo the compact operators of the corresponding Hilbert modules and since the sum is finite,
the operator $\what\pi_{2}(f)W - W \what\pi_{1} (f)$ is clearly compact. Now, since $W^*$ is an adjointable operator, composing with $W^*$ on the left yields to the conclusion.

Finally, $W$ was indeed born to be $G$-invariant.
Since the submodule $\maE'_1$ is a $G$-submodule, we may prove $G$-invariance strongly on the vectors of $\maE'_1$.
Let us denote the $G$-actions on $H_1\oplus H_2$ by $\what{V}^1:= V^1\oplus V^2$ and similarly by $\what{V}^2:= V^2\oplus V^1$ the $G$-action on $H_2\oplus H_1$.
We then compute for any $(x, h)\in G$:
\begin{eqnarray*}
 W_x\what{V}_{(x,h)}^1 &=& \sum_{g\in \Gamma} (\id\otimes  U_{g})\what{V}^2_{(x,g)} S_{xg}\pi_{1,x}\left(\sqrt{\chi}\right) \what{V}^1_{(xg,g^{-1})} \what{V}^1_{(x,h)}\\
 &=& \sum_{g\in \Gamma} (\id\otimes  U_{g})\what{V}^2_{(x,g)} S_{xg}\pi_{1,x}\left(\sqrt{\chi}\right) \what{V}^1_{(xg,g^{-1}h)}\\
 &=& \sum_{l\in \Gamma}  (\id\otimes U_{hl})\what{V}^2_{(x,hl)}S_{xhl}\pi_{1,x}\left(\sqrt{\chi}\right) \what{V}^1_{(xhl,l^{-1})}
 \end{eqnarray*}
 Hence
 \begin{eqnarray*}
W_x\what{V}_{(x,h)}^1 &=& \sum_{l\in \Gamma} (\id\otimes  U_{hl})\what{V}^2_{(x,h)}\what{V}^2_{(xh,l)} S_{xhl}\pi_{1,x}\left(\sqrt{\chi}\right) (\what{V}^1_{(xh,l)})^{-1}\\
 &=& \sum_{l\in \Gamma} \what{V}^2_{(x,h)}(\id\otimes U_{l})\what{V}^2_{(xh,l)}S_{xhl}\pi_{1,x}\left(\sqrt{\chi}\right) (\what{V}^1_{(xh,l)})^{-1}\\
 &=& \what{V}^2_{(x,h)} W_{xh}
\end{eqnarray*}
which is the required right hand side.
\end{proof}

It is worthpointing out that all the previous theorems apply to the case of $A=C_0(Z, B)$ where $B$ is any separable unital $\Gamma$-algebra. An already interesting application is when $A= C_0(Z)$ as we shall see in the next section.

\medskip

\subsection{The general case}\label{ControlNow}

By using an easy generalization of the PPV work, expanded in Appendix \ref{ControlPPV}, we now state the norm-controlled version of our main Theorem \ref{GequivVRiso2}, say Theorem \ref{GequivVRiso} which gives the precise generalization of results in \cite{Voiculescu}, compare also with \cite{Kasparov}. So the goal of this section is to explain how to adapt the proof of the previous section so as to construct the sequence of unitaries of Theorem  \ref{GequivVRiso}.
{For a family $T=(T_\ep)_{0<\ep \leq \ep_0}$ of operators $T_\ep\in C(X, {\maK(H)})$, we shall use the notation $T\stackrel{\epsilon}{\sim} 0$
to denote the fact that the family $T$ (is composed of compact operators $T_\ep$ which) have {uniform} norm at most a constant multiple of $\epsilon$ as $\ep\to 0$; the constant may depend on the family $T$. More precisely, such a family $T:=(T_\ep)_{0< \ep \leq\ep_0}$  in $C(X, {\maK(H)})$ satisfies $T\stackrel{\epsilon}{\sim} 0$ if there is some constant $C(T) >0$ such that  $\vert \vert T_\ep\vert\vert  \leq C(T) \ep$ for any $\ep$.}
Recall that $A$ is a separable proper $\Gamma$-algebra over $Z$.

Let $\Sigma$ be a countable dense subset of the separable $C^*$-algebra $C(X, A)${, which contains $0$,} is closed under the involution $a\mapsto a^*$, and globally $\Gamma$-invariant. Such $\Sigma$ always exists since we can for instance take the union of the $\Gamma$-orbits  of a countable dense self-adjoint subset of $C(X, A)$.

\begin{theorem}\label{GequivControlledPPV}[Controlled version of Theorem \ref{GequivVRiso2}]\\
 Under the assumptions of Theorem \ref{GequivVRiso2}, if we fix $\ep >0$ then we can ensure that  the $\Gamma$-invariant  unitary operator
$$
W=W_\ep\in \maL_{C(X)} \left([(H_1\oplus H_2)\otimes \ell^2\Gamma^\infty]\otimes C(X) ,  [(H_2\oplus H_1)\otimes \ell^2\Gamma^\infty] \otimes C(X)\right),
$$
obtained in that theorem, satisfies in addition the following control condition:
$$
\forall \varphi \in \Sigma, \exists C_\varphi \text{ independent of $\ep$ such that } \vert\vert W_\ep^*\what\pi_2 (\varphi) W_\ep - \what\pi_1(\varphi) \vert\vert \leq C_\varphi \ep.
$$
\end{theorem}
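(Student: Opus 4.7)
The plan is to redo the entire construction of $W$ from the proof of Theorem \ref{GequivVRiso2}, but with norm bounds tracked at each step, using the controlled version of the PPV absorption theorem from Appendix \ref{ControlPPV} as the starting input. That controlled PPV is expected to upgrade the classical PPV conclusion to a family of unitaries indexed by $\ep>0$ which asymptotically intertwine the representations not only modulo compacts but also in norm on any chosen countable dense $\Gamma$-invariant self-adjoint subset.

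First I would apply the controlled PPV theorem to the extended representations $(\what\pi_i^\chi\oplus 0)^+$ built in Step 1 of Lemma \ref{finpropVRiso1}, to obtain for each $\ep>0$ a unitary $S_{\chi,\ep}$ satisfying $\|S_{\chi,\ep}^*\what\pi_2^\chi(f) S_{\chi,\ep} - \what\pi_1^\chi(f)\| \leq C_f^{(0)}\ep$ for every $f$ in a countable dense subset $\Sigma_\chi\subset C(X, A_\chi)$, which we take to contain the relevant translates $g^{-1}\varphi\cdot\sqrt{\chi}$ needed below. Then, extracting the $(1,1)$-corner $s_{\chi,\ep}$ and passing to $\hat s_{\chi,\ep}$, I would verify that the four properties listed just after Step 1 of Lemma \ref{finpropVRiso1} hold in a controlled form, with each ``$\sim 0$'' replaced by ``$\stackrel{C_f\ep}{\sim}0$''. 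The one delicate point is that forming $(1-s_\chi s_\chi^*)^{1/2}$ passes through a square root: I would handle this by polynomial approximation of $\sqrt{\cdot}$ on $[0,1]$, using that $(1-s_\chi s_\chi^*)^k \what\pi_2^\chi(f)$ is norm-small for every fixed $k\geq 1$ by iterating property (2). The resulting loss (possibly a fractional power of $\ep$) is absorbed at the outset by feeding a suitable power of $\ep$ into the controlled PPV.

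Next, the assembly from $\hat s_\chi$ through $\hat S$, then $S$, then through the stabilization maneuver with $u_\infty$, $r_\infty$, $R_1$, $R_2$, the flip $\alpha$, and finally the composition $U = T^*\alpha^* S$ of the proof of Theorem \ref{NCVRisometry2}, involves only finitely many algebraic operations (products, sums, compositions with fixed stabilizing unitaries). Each such operation preserves the controlled estimate at the cost of multiplying the constants by bounded factors that depend on operator norms of the fixed ingredients. Enlarging $\Sigma_\chi$ to a countable dense $\Gamma$-invariant self-adjoint $\Sigma\subset C(X,A)$ is harmless because every $\varphi\in\Sigma$ decomposes, via $\sum_{g\in\Gamma}g^*\chi = 1_Z$ and the uniform properness of the action, as a finite sum of terms of the form $g\cdot(f\sqrt{\chi})$ with $f\in\Sigma_\chi$.

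Finally, I would redo the Fell-trick averaging
\[
W_\ep \;=\; \sum_{g\in\Gamma}(\id\otimes U_g)\circ g\bigl(S_{\chi,\ep}\,\pi_1(\sqrt{\chi})\bigr),
\]
retaining the control. For $\varphi\in\Sigma$ the expression $\what\pi_2(\varphi) W_\ep - W_\ep\what\pi_1(\varphi)$ collapses, by the $G$-equivariance computation already carried out in the proof of Theorem \ref{GequivVRiso1}, into a finite sum with at most $N_\varphi := \#\{g\in\Gamma : \varphi\cdot(g\chi)\neq 0\}$ nonzero terms (finite by uniform properness). Each such term is bounded in norm by $C^{(1)}_{g^{-1}\varphi\sqrt{\chi}}\,\ep$, so setting $C_\varphi := N_\varphi\cdot\max_g C^{(1)}_{g^{-1}\varphi\sqrt{\chi}}$ yields the stated estimate $\|W_\ep^*\what\pi_2(\varphi) W_\ep - \what\pi_1(\varphi)\| \leq C_\varphi\ep$, independently of $\ep$. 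The direct-sum passage from an approximate-isometry to an approximate-unitary is $G$-equivariant and preserves the control. The principal obstacle is the functional-calculus step in forming $\hat s_\chi$, handled as indicated above by polynomial approximation combined with the controlled PPV fed with a suitable power of $\ep$.
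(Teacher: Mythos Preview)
Your proposal follows essentially the same route as the paper: redo Lemma \ref{finpropVRiso1} and Theorem \ref{NCVRisometry2} with norm bounds tracked using the controlled PPV from Appendix \ref{ControlPPV}, and then push those bounds through the Fell-trick averaging exactly as you describe, exploiting that the relevant sums are finite with a number of terms independent of $\ep$.

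Two points where the paper differs. First, the square-root step is handled without polynomial approximation: one simply uses the $C^*$-identity
\[
\bigl\|(1-s_\chi s_\chi^*)^{1/2}\what\pi_2^\chi(f)\bigr\|^2 = \bigl\|\what\pi_2^\chi(f^*)(1-s_\chi s_\chi^*)\what\pi_2^\chi(f)\bigr\| \leq \|f\|\cdot\bigl\|(1-s_\chi s_\chi^*)\what\pi_2^\chi(f)\bigr\|,
\]
which gives a $\sqrt{\ep}$ bound directly; feeding $\ep^2$ then linearizes the final estimate, just as you anticipated. Your polynomial-approximation route would work but is unnecessarily delicate (the approximants of $\sqrt{\cdot}$ near $0$ have growing coefficients). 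Second, your claim that $N_\varphi=\#\{g:\varphi\cdot(g\chi)\neq 0\}$ is finite for every $\varphi\in\Sigma$ is not justified as stated, since elements of $\Sigma\subset C(X,A)$ need not have compact $Z$-support; the paper fixes this by first approximating $\varphi$ within $\ep$ by some $\varphi'\in C(X,A_c)$, running the finite-sum estimate on $\varphi'$, and absorbing the extra $O(\ep)$ from $\|\varphi-\varphi'\|$.
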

\medskip

Said differently, $W_\ep$ satisfies the {support condition} plus the relation
$$
  W_\ep^*\what\pi_2 (\varphi) W_\ep - \what\pi_1(\varphi) \stackrel{\epsilon}{\sim} 0, \quad \text{ for all } \varphi\in \Sigma. 
  $$
\medskip

We only need to explain how to complete  the proof given for Theorem \ref{GequivVRiso2} so that the control is ensured. We thus start by stating the following Lemma which generalizes Lemma \ref{finpropVRiso1}.

\medskip

\begin{lemma}\label{ControlledPPViso1}[Controlled version of Lemma \ref{finpropVRiso1}]\\
 {{Under the assumptions of Theorem \ref{GequivControlledPPV}}} and given $\ep >0$, there exists an isometry
 $$
 \hat{S}_\ep \in \maL_{C(X)}((H_1\oplus H_2)\otimes C(X),  (H_2\oplus H_1)\otimes \ell^2\Gamma \otimes C(X))
 $$
 with  the same support  as in Lemma \ref{finpropVRiso1} such that
 $$
 \hat{S}_\ep^*((\what\pi_2(\varphi)\otimes \id_{\ell^2\Gamma})\oplus 0)\hat{S}_\ep- (\what\pi_1(\varphi)\oplus 0) \stackrel{\epsilon}{\sim} 0, \quad  \forall \varphi \in \Sigma.
 $$
\end{lemma}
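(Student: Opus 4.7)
The plan is to revisit the three-step construction of $\hat S$ in the proof of Lemma~\ref{finpropVRiso1} and to keep quantitative track of the norm of each compact remainder that appeared there. The countable subset $\Sigma\subset C(X,A)$ is fixed once and for all; it is $\Gamma$-invariant and closed under involution. We also set $\Sigma_\chi\subset C(X,A_\chi)$ to be the countable set obtained from the products $\chi^{1/2}\varphi$ and $\varphi\chi^{1/2}$ as $\varphi$ ranges over $\Sigma$, together with all their $\Gamma$-translates.

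\textbf{Step 1 (Controlled PPV input).} Apply the controlled version of the Pimsner--Popa--Voiculescu theorem established in Appendix~\ref{ControlPPV} to the extended fibrewise ample $C(X)$-representations $(\what\pi_i^\chi\oplus 0)^+$ of the unitization of $C(X,A_\chi)$. This furnishes, for each $\ep>0$, a unitary $S_{\chi,\ep}\in\maL_{C(X)}(H_1\otimes C(X),H_2\otimes C(X))$ and, for every $f\in\Sigma_\chi$, a constant $C_f$ (independent of $\ep$) with
\[
S_{\chi,\ep}^*\begin{bmatrix}\what\pi_2^\chi(f)&0\\0&0\end{bmatrix}S_{\chi,\ep}-\begin{bmatrix}\what\pi_1^\chi(f)&0\\0&0\end{bmatrix}\stackrel{\ep}{\sim}0.
\]

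\textbf{Step 2 (Controlled extraction).} Extract the $(1,1)$-corner $s_{\chi,\ep}$ of $S_{\chi,\ep}$ and form the $2\times 2$ unitary $\hat s_{\chi,\ep}$ by the same formula as before. The algebraic identity (4) (positivity of $1-s^*s$) is preserved exactly. The three compactness relations (1)--(3) of Step~2 in the original proof all now upgrade to $\stackrel{\ep}{\sim} 0$ with constants depending polynomially on the $C_f$'s: indeed, writing $P=S_{\chi,\ep}S_{\chi,\ep}^*$, passing from the intertwining statement $S^*AS-A\stackrel{\ep}{\sim} 0$ to the commutator and corner statements is a routine manipulation of block matrices that does not involve any infinite summation. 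Extending $\hat s_{\chi,\ep}$ by zero gives a partial isometry with the same supporting subspaces as $\hat s_\chi$.

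\textbf{Step 3 (Averaging with bookkeeping).} Define $\hat S_\ep$ by the same pointwise formula as $\hat S$, substituting $\hat s_{\chi,\ep}$ for $\hat s_\chi$. The isometry property and the support control are verified exactly as in Lemma~\ref{finpropVRiso3}(1) and~(3), since those arguments used only the relation $\sum_{g\in\Gamma}g^*\chi=1_Z$, the support of $\hat s_\chi$, and the properness of the action. For the intertwining inequality, repeat verbatim the computation of Lemma~\ref{finpropVRiso3}(2) for $\varphi\in\Sigma$, replacing each occurrence of $\sim$ by $\stackrel{\ep}{\sim}$ and invoking the Step~2 estimates in place of the old compactness assertions. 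By properness of the $\Gamma$-action on $Z$ and the fact that $\chi$ is a cutoff, only a finite subset $I_\varphi\subset\Gamma$ of translates $g^{-1}\varphi$ satisfy $\chi^{1/2}g^{-1}\varphi\neq 0$; since $\Sigma$ is $\Gamma$-invariant, each such $\chi^{1/2}g^{-1}\varphi$ lies in $\Sigma_\chi$. Summing, we obtain
\[
\bigl\|\hat S_\ep^{\,*}\bigl((\what\pi_2(\varphi)\otimes\id_{\ell^2\Gamma})\oplus 0\bigr)\hat S_\ep-(\what\pi_1(\varphi)\oplus 0)\bigr\|\;\leq\;\Bigl(\sum_{g\in I_\varphi}C_{\chi^{1/2}g^{-1}\varphi}\Bigr)\,\ep,
\]
and compactness of the remainder follows from Step~2 together with the finiteness of $I_\varphi$.

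The main obstacle lies in Step~1: we need the appendix to deliver a single unitary $S_{\chi,\ep}$ whose intertwining defect is simultaneously $\ep$-small (up to a $\varphi$-dependent constant) on every element of the prescribed countable set $\Sigma_\chi$. Once that quantitative PPV input is granted, Steps~2 and~3 are essentially bookkeeping: the functional calculus and block-matrix operations preserve the $\stackrel{\ep}{\sim}$ relation, and the averaging over $\Gamma$ introduces only finitely many terms per test element $\varphi$ thanks to the cutoff and the (uniform) properness of the action.
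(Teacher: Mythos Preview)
Your approach is essentially the paper's: follow the three steps of Lemma~\ref{finpropVRiso1}, replacing each $\sim$ by $\stackrel{\ep}{\sim}$ via a controlled PPV input. Two points deserve attention.

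First, the obstacle you flag in Step~1 is exactly what the paper resolves, and the fix is short: the appendix (Corollary~\ref{PPVunitaryseq}) requires a \emph{compact} self-adjoint total subset of the extension algebra, not merely a countable one. The paper manufactures such a set by enumerating $\Sigma^0_\chi=\{r_n\}$ and rescaling to $r'_n=r_n/(1+n\|r_n\|)$, which converges to $0$ and hence has compact closure; the uniform bound from the appendix then unwinds to a constant $C_{r_n}=(1+n\|r_n\|)\cdot C$ for each original $r_n$. You should also explicitly adjoin $(\chi^{1/2},0)$ and $(0,1_X)$ to this set: the controlled estimate for $\chi^{1/2}$ itself is what yields the bound on the commutator $K_\chi=\hat s_\chi(\pi_1(\chi^{1/2})\oplus\pi_2(\chi^{1/2}))-(\pi_2(\chi^{1/2})\oplus\pi_1(\chi^{1/2}))\hat s_\chi$ used in Step~3, and your set $\Sigma_\chi$ of products $\chi^{1/2}\varphi$ does not automatically contain $\chi^{1/2}$.

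Second, your claim in Step~3 that $I_\varphi$ is finite for every $\varphi\in\Sigma$ by properness alone is not quite right: finiteness of $\{g:\chi^{1/2}g^{-1}\varphi\neq0\}$ requires $\varphi$ to have compact support in $Z$. The paper first replaces $\varphi$ by a compactly supported approximant and absorbs the approximation error into the constant; you should do the same.
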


\medskip

\begin{proof}\
We explain the needed complements to  the steps in the proof of Lemma \ref{finpropVRiso1}, which  exploit the norm control on the residual compact operators as in Theorem \ref{PPVseq} of the appendix. We also forget the support condition for $\hat{S}$ which is again satisfied as one can check easily. We again
fix a cut-off function $\chi\in C(Z)$. Let  $V_\chi$ is the {{non-empty}} interior of the support of $\chi$.

\textbf{Step 1:} We need to apply Corollary \ref{PPVunitaryseq}.
Let $\Lambda$ be a countable dense subset of $C(X)$, containing $0$ and $1_X$, which is closed under adjoints. {Let $\Sigma^0$ be a countable dense subset of $C(X, C_0(Z, \chi)A^+)$ which contains $\Sigma$, is closed under adjoints and such that  (modifying  $\Sigma$ if necessary)   $\Sigma^0\cap C(X, A_\chi)$ is dense in $C(X,A_\chi)$. This latter condition can be easily achieved. One can  for instance  choose an extra countable dense self-adjoint subset $\Sigma^1$ of $C(X, A_\chi)$ and  replace $\Sigma^0$ by $\Sigma^0\cup \Sigma^1$. Alternatively  one can join to $\Sigma$ from the beginning the self-adjoint $\Gamma$-subset of all $\Gamma$-orbits of  some dense countable self-adjont subset of $C(X, C_0(V_\chi)A)$, and then easily build up $\Sigma^0$ with the above properties.  Consider then the subset
$\Sigma^0_\chi:=\{r_n\}_{n\in \N}$ of $C(X, A_\chi)\oplus C(X)$ composed of elements $r_n$ which either belong to $(\Sigma^0\cap C(X, A_\chi), \Lambda)$ or are of the form $(f\chi^{1/2},0)$,
where $f\in \Sigma$.} Since the sequence
$$
\{r'_n= r_n/(1+n.||r_n||)\,\text{ for } r_n\in \Sigma^0_\chi\}_{n=1}^{\infty}
$$
is convergent to $0$ in $C(X, A_\chi)\oplus C(X)$, the collection $\Sigma_\chi=\{(\chi^{1/2},0), (0,1_X)\}\cup \{r'_n\}_{n\in \N}$ is a compact self-adjoint total subset of
$C(X, A_\chi)\oplus C(X)$.

Then the image $(\what{\pi}^\chi_1\oplus 0)^+(\Sigma_\chi)$ is a self-adjoint compact subset of
$C_\chi:= (\what{\pi}^\chi_1\oplus 0)^+(C(X, A_\chi)\oplus C(X))$, which is total in $C_\chi$ and contains the identity. Let $B_\chi$ be the algebra generated by $C_\chi$ and $C(X, \maK(H_1))$, which defines an $X$-extension algebra for the
unital algebra
$C(X, A_\chi)\oplus C(X)$. Since $(\what{\pi}_1^\chi\oplus 0)^+ $ is fibrewise ample, using the same arguments as in the proof of Lemma \ref{finpropVRiso1}, we conclude
that $B_\chi$ is a trivial $X$-extension. Using the separability of $C(X, \maK(H_1))$, we fix a compact self-adjoint total subset $F_\chi$ of $B_\chi$ which contains $\Sigma_\chi$.

Consider also the trivial $X$-extension obtained analogously by $(\what{\pi}_2^\chi\oplus 0)^+$. Then we get, using the notations in the proof
of Lemma \ref{finpropVRiso1} and Corollary \ref{PPVunitaryseq} (for the compact subset $F_\chi$), a unitary $S_{\chi, \ep}\in \maL_{C(X)} ({(H_1\oplus H_2)}\otimes C(X), {(H_2\oplus H_1)}\otimes C(X))$ depending on $\ep$, such that we have in particular for any
$\tilde{f}\in \Sigma_\chi$ of the form $\tilde{f}=(f, 0)$:
$$
S^*_{\chi, \ep} \begin{bmatrix}   \what\pi^\chi_2(f) &0\\ 0 & 0   \end{bmatrix} S_{\chi, \ep} -\begin{bmatrix}   \what\pi^\chi_1(f) &0\\ 0 & 0   \end{bmatrix} \stackrel{\epsilon}{\sim} 0
$$
where the Hilbert $C(X)$-modules $H_i\otimes C(X)$ {are} decomposed as $\maE^i_\chi\oplus (H_i\otimes C(X))$ for $i=1, 2$.\\

\textbf{Step 2:} Let $s_{\chi, \ep}: \maE^1_\chi \rightarrow \maE^2_\chi$ be the $(1,1)$-entry in the matrix decomposition of $S_{\chi, \ep}$. Then we have for $\tilde{f}=(f,0)\in \Sigma_\chi$:
\begin{enumerate}
\item $s^*_{\chi, \ep} \what\pi^\chi_2(f)s_{\chi, \ep}- \what\pi^\chi_1(f) \stackrel{\epsilon}{\sim} 0$, 
\item $\what\pi^\chi_2(f)(s_{\chi, \ep} s^*_{\chi, \ep}- \id) \stackrel{\epsilon}{\sim} 0$ and  $(s^*_{\chi, \ep} s_{\chi, \ep}- \id)\what\pi^\chi_1(f)\stackrel{\epsilon}{\sim} 0$,
\item $[s_{\chi, \ep} s^*_{\chi,\ep} ,\what\pi^\chi_2(f)]\stackrel{\epsilon}{\sim} 0$,
\item $(1-s^*_{\chi, \ep} s_{\chi, \ep})$ and $(1-s_{\chi, \ep} s^*_{\chi, \ep})$ are positive operators.
\end{enumerate}

Indeed, for the first item, it suffices to observe that $s^*_{\chi, \ep}\what\pi^\chi_2(f)s_{\chi, \ep}- \what\pi^\chi_1(f)$ is the $(1,1)$-entry in the matrix given by:
$$
S^*_{\chi, \ep}\begin{bmatrix}   \what\pi^\chi_2(f) &0\\ 0 & 0   \end{bmatrix} S_{\chi, \ep} -\begin{bmatrix}   \what\pi^\chi_1(f) &0\\ 0 & 0   \end{bmatrix}
$$
Since the norms of the elements constituting a $2\times 2$ matrix is bounded above by the norm of the matrix itself, we are done. The proof of the other properties {{is}} similar.
As in Lemma \ref{finpropVRiso1} we define the unitary $\hat{s}_{\chi, \ep}: \maE^1_\chi\oplus \maE^2_\chi\rightarrow \maE^2_\chi\oplus \maE^1_\chi$ as follows:
$$
\hat{s}_{\chi, \ep}:= \begin{bmatrix}   s_{\chi, \ep} & (1-s_{\chi, \ep} s_{\chi, \ep}^*)^{1/2}\\ {-}(1-s_{\chi, \ep}^* s_{\chi, \ep})^{1/2} & s_{\chi, \ep}^*   \end{bmatrix}
$$
We have the following formula for any $(f,0)\in \Sigma_\chi$:
$$
\hat{s}^*_{\chi, \ep} (\what\pi^\chi_2(f)\oplus 0)\hat{s}_{\chi, \ep}- (\what\pi^\chi_1(f)\oplus 0) =
\begin{bmatrix}   s^*_{\chi, \ep} \what{\pi}^\chi_2(f)s_{\chi, \ep} -\what{\pi}^\chi_1(f) & s_{\chi, \ep}^*\what{\pi}_2^\chi(f)(1-s_{\chi, \ep} s_{\chi, \ep}^*)^{1/2}\\ (1-s_{\chi, \ep} s^*_{\chi, \ep})^{1/2}\what{\pi}_2^\chi(f)s_{\chi, \ep} & (1-s_{\chi, \ep} s^*_{\chi, \ep})^{1/2}\what{\pi}_2^\chi(f)(1-s_{\chi, \ep} s^*_{\chi, \ep})^{1/2}  \end{bmatrix}
$$
Note that $||s_{\chi, \ep}||\leq 1$, and $||(1-s_{\chi, \ep} s_{\chi, \ep}^*)^{1/2}||\leq 1$. We also have
$$
||(1-s_{\chi, \ep} s_{\chi, \ep}^*)^{1/2}\what{\pi}^\chi_2(f)||^2= ||\what{\pi}^\chi_2(f^*)(1-s_{\chi, \ep} s_{\chi, \ep}^*)\what{\pi}^\chi_2(f)||\leq ||f||.||(1-s_{\chi, \ep} s_{\chi, \ep}^*)\what{\pi}^\chi_2(f)||
$$

Thus we get from properties (2) and (3) above that $(1-s_{\chi,\ep} s_{\chi, \ep}^*)^{1/2}\what{\pi}^\chi_2(f) \stackrel{\epsilon}{\sim} 0$. Therefore all the matrix entries $A_{ij}$ in the above matrix
satisfy $A_{ij}\stackrel{\epsilon}{\sim} 0$. Thus we get:
$$
\hat{s}^*_{\chi, \ep} (\what\pi^\chi_2(f)\oplus 0)\hat{s}_{\chi, \ep}- (\what\pi^\chi_1(f)\oplus 0) \stackrel{\epsilon}{\sim} 0
$$

Extending the unitary $\hat{s}_{\chi, \ep}$ by zero, we  get a partial isometry in
$\maL_{C(X)}((H_1\oplus H_2)\otimes C(X), (H_2\oplus H_1)\otimes C(X))$, that we still denote by $\hat{s}_{\chi, \ep}$. \\

\textbf{Step 3 :} For  $(x,g)\in G$, we denote by $V^i_{(x,g)}$  the unitary implementing the $G$-action on $H_i\otimes C(X)$.
Then we define an operator $\hat{S}_\ep \in \maL_{C(X)}((H_1\oplus H_2)\otimes C(X), \ell^2\Gamma\otimes (H_2\oplus H_1)\otimes C(X))$ {{by setting  the following pointwise formula:
$$
\left[\hat{S}_{\ep, x}\right]_g:= \left(V^2_{(x,g)} \oplus V^1_{(x,g)}\right) \hat{s}_{\chi, \ep} \left(\what\pi_1(\chi^{1/2})\oplus \what\pi_2(\chi^{1/2})\right)
\left(V^1_{(x,g)} \oplus V^2_{(x,g)}\right)^{-1}.
$$}}
The operator $\hat{S}_\ep$ then satisfies the allowed properties in the statement of Lemma \ref{ControlledPPViso1}, as we prove below. Let us show that we also have:
$$
 \hat{S}_\ep^*((\what\pi_2(f)\otimes \id_{\ell^2\Gamma})\oplus 0)\hat{S}_\ep- (\what\pi_1(f)\oplus 0) \stackrel{\epsilon}{\sim} 0, \quad  \forall f\in \Sigma.
$$
{Replacing $f$ by a compactly supported element which is as uniformly close as we please to $f$, we may assume that $f$ is itself compactly supported.}
Denote then by $\Gamma(\chi, f)$ the set of $g\in \Gamma$ such that $\Supp(g^*f)\cap \Supp(\chi)\neq \emptyset$. Due to the properness of the $\Gamma$-action, this is a finite set.
Consider the compact operators for $g\in \Gamma, f\in \Sigma$:
$$
K_{\chi, \ep}:=\hat{s}_{\chi, \ep} \left(\what\pi_1(\chi^{1/2})\oplus \what\pi_2(\chi^{1/2})\right)- \left(\what\pi_2(\chi^{1/2})\oplus \what\pi_1(\chi^{1/2})\right)\hat{s}_{\chi, \ep}, \quad \text{and}
$$
$$
K_{\ep, g}(\chi, f)=\hat{s}^*_{\chi, \ep} (\what\pi^\chi_2((g^*f) \chi^{1/2})\oplus 0)\hat{s}_{\chi, \ep}- (\what\pi^\chi_1((g^*f)\chi^{1/2})\oplus 0)
$$

Note that we have $||K_{\ep, g}(\chi, f)||\leq C_g\epsilon$ for some constant $C_g>0$
independent of $\epsilon$ and similarly for $K_{\chi, \ep}$ with constant $C_\chi$.

Then from the computation in item (2) in the proof of Lemma \ref{finpropVRiso3}, we get:
\begin{eqnarray*}
&&[\hat{S}_\ep^*((\what\pi_2(f)\otimes \id_{\ell^2\Gamma})\oplus 0)\hat{S}_\ep- (\what\pi_1(f)\oplus 0)]_{x} \\
&=&{\sum_{g\in \Gamma(\chi, f)} \left(V^1_{(x,g)} \oplus V^2_{(x,g)}\right) \left(\what\pi_1(\chi^{1/2})\oplus \what\pi_2(\chi^{1/2})\right) \left({\hat s}_{\chi, \ep}^* ({\hat\pi}_2(g^*f)\oplus 0) K_\chi+K_g(\chi, f)\right)_x
\left(V^1_{(x,g)} \oplus V^2_{(x,g)}\right)^{-1}}
\end{eqnarray*}
{Recall  that when $\supp (\varphi)\cap \supp (\chi) =\emptyset$ we have ${\hat s}_{\chi, \ep}^* ({\hat\pi}_2(\varphi)\oplus 0) =0$ since the range of ${\hat s}_{\chi, \ep}$ is contained in $\maE_\chi^2\oplus \maE_\chi^1$.}

Thus one gets for any $f\in \Sigma$,
$$
||\hat{S}_\ep^*((\what\pi_2(f)\otimes \id_{\ell^2\Gamma})\oplus 0)\hat{S}_\ep- (\what\pi_1(f)\oplus 0)||\leq \left(C_\chi+\max_{g\in \Gamma(\chi, f)}C_g\right). |\Gamma(\chi,f)|.\epsilon
$$
This proves the claim.
\end{proof}

\begin{corollary}[Norm-controlled version of Theorem \ref{NCVRisometry2}]\label{ControlledPPViso2}
  There exists a unitary
 $$
 \hat{S}_\ep \in \maL_{C(X)}(\ell^2\Gamma^\infty\otimes (H_1\oplus H_2)\otimes C(X), \ell^2\Gamma^\infty\otimes (H_2\oplus H_1)\otimes C(X))
 $$
as in Theorem \ref{NCVRisometry2}  such that $
 \hat{S}_\ep^*((\what\pi_2(f)\otimes \id_{\ell^2\Gamma})\oplus 0)\hat{S}_\ep- (\what\pi_1(f)\oplus 0) \stackrel{\epsilon}{\sim} 0$ for any  $f\in \Sigma.$
\end{corollary}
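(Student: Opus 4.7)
The plan is to re-run the proof of Theorem \ref{NCVRisometry2} verbatim, but starting from the controlled isometry of Lemma \ref{ControlledPPViso1} in place of the uncontrolled isometry of Lemma \ref{finpropVRiso1}, and to track the norm bounds through each algebraic manipulation. First I would apply Lemma \ref{ControlledPPViso1} with $H_1, H_2$ replaced by $\ell^2\N\otimes H_1, \ell^2\N\otimes H_2$ and $\what\pi_i$ replaced by $\what\pi_i^\infty$; the countable dense self-adjoint $\Gamma$-invariant subset $\Sigma\subset C(X,A)$ remains adapted to these amplified representations. This yields an isometry $S_0^{(\epsilon)}$ with the prescribed support property, satisfying
\[
(S_0^{(\epsilon)})^* \bigl((\what\pi_2^\infty)^\infty(f)\oplus 0\bigr) S_0^{(\epsilon)} - \bigl((\what\pi_1^\infty)^\infty(f)\oplus 0\bigr)\stackrel{\epsilon}{\sim} 0, \quad f\in\Sigma.
\]

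Next, I would rescale by the fixed unitaries $u_\infty:\ell^2\Gamma\otimes\ell^2\Gamma^\infty\to\ell^2\Gamma^\infty$ and $r_\infty:\ell^2\N\otimes\ell^2\N\to\ell^2\N$, and form the isometry $S_1^{(\epsilon)}$ exactly as in the proof of Corollary \ref{balance} and the first half of the proof of Theorem \ref{NCVRisometry2}. Since composition with fixed unitaries does not affect operator norms, the norm-controlled estimate $\|(S_1^{(\epsilon)})^*(\what\pi_2^\infty(f)\oplus 0)S_1^{(\epsilon)} - ((\what\pi_1^\infty)^\infty(f)\oplus 0)\|\leq C_f\,\epsilon$ for $f\in\Sigma$ is preserved, with the constant $C_f$ depending only on $f$ (through the constant $C_\chi$ of Lemma \ref{ControlledPPViso1} together with the cardinality of $\Gamma(\chi,f)$), but not on $\epsilon$.

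The final step is the passage from the isometry $S_1^{(\epsilon)}$ to a genuine unitary via the formula $S=(I-S_1 S_1^* + S_1 R_1^* S_1^*,\; R_2 S_1^*)^t$ used in the proof of Theorem \ref{NCVRisometry2}, with the analogous $T$ for the reverse direction, the flip $\alpha$, and the composition $\hat{S}_\epsilon := T^*\alpha^* S$. The crucial algebraic point, which is already present in the uncontrolled proof, is that $R_1$ commutes \emph{exactly} with $\id_{\ell^2\N}\otimes\what\pi_1^\infty(f)\oplus 0$ and that $R_2$ \emph{exactly} intertwines $\what\pi_1^\infty\oplus 0$ with $(\id_{\ell^2\N}\otimes\what\pi_1^\infty)\oplus 0$, and similarly $\alpha$ is exactly intertwining. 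Consequently, when one writes out the matrix representing $\hat{S}_\epsilon^*\bigl((\what\pi_2(f)\otimes\id_{\ell^2\Gamma})\oplus 0\bigr)\hat{S}_\epsilon - (\what\pi_1(f)\oplus 0)$, every nontrivial entry is a polynomial in $S_1^{(\epsilon)}$, $(S_1^{(\epsilon)})^*$ and the residual compact operator $(S_1^{(\epsilon)})^*\what\pi_2^\infty(f)S_1^{(\epsilon)} - \what\pi_1^\infty(f)$, whose coefficients are contractions independent of $\epsilon$.

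The main technical obstacle I anticipate is precisely this last bookkeeping: one must verify entry-by-entry that no cancellation in the construction of $S$, $T$ is only approximate, so that the final error term is literally a sum of finitely many compact operators each of norm $\leq C'_f\,\epsilon$. Granted this, one concludes $\|\hat{S}_\epsilon^*(\what\pi_2(f)\oplus 0)\hat{S}_\epsilon - (\what\pi_1(f)\oplus 0)\|\leq C''_f\,\epsilon$ for some constant $C''_f$ depending only on $f\in\Sigma$, which is the desired $\stackrel{\epsilon}{\sim} 0$ control.
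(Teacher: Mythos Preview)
Your proposal is correct and follows essentially the same approach as the paper's own proof, which simply states that one re-runs the constructions of Corollary \ref{balance} and Theorem \ref{NCVRisometry2} starting from the controlled isometry of Lemma \ref{ControlledPPViso1} and verifies that the norm control propagates. You have in fact given more detail than the paper itself, correctly isolating the key reason the bookkeeping works: the auxiliary operators $R_1$, $R_2$, $\alpha$, $u_\infty$, $r_\infty$ are exact intertwiners or fixed unitaries, so every residual compact term in the final expression is built from the controlled error of $S_1^{(\epsilon)}$ with $\epsilon$-independent coefficients.
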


\begin{proof}
 By directly verifying the constructions in Corollary \ref{balance} and the proof of Theorem \ref{NCVRisometry2}, we see that if the initial isometry is chosen to satisfy the conditions in
 Lemma \ref{ControlledPPViso1}, then all the intertwining isometries and unitaries that appear in the proofs of Corollary \ref{balance} and Theorem \ref{NCVRisometry2} must also satisfy the
 analogous condition on the norms of the residual compact operators.
\end{proof}

We are now ready to prove Theorem \ref{GequivControlledPPV}.

\begin{proof}[Proof of Theorem \ref{GequivControlledPPV}:]
Let $S_\ep\in \maL_{C(X)}(\ell^2\Gamma^\infty\otimes (H_1\oplus H_2)\otimes C(X), \ell^2\Gamma^\infty\otimes (H_2\oplus H_1)\otimes C(X)$ be a unitary, obtained from Corollary \ref{ControlledPPViso2},
such that
\begin{equation}\label{intertwine}
S_\ep^*\what{\pi}_2(f)S_\ep- \what{\pi}_1(f) \stackrel{\epsilon}{\sim} 0 \quad  \forall f\in \Sigma.
\end{equation}
Observe that if $f'\in C(X, A_c)$, with $A_c:=C_c(Z)A$, satisfies  $||f-f'||_\infty \leq \epsilon$, then the analogous relation to \ref{intertwine} also holds for $f'$,
and vice versa,  if the relation holds for $f'$ it also holds for $f$. Also note that since $\Sigma$ is globally $\Gamma$-invariant,
the construction of the $\Gamma$-invariant unitary $W_\ep$ which intertwines the representations $\what\pi_1$ and $\what\pi_2$ then follows from Theorem \ref{GequivVRiso2}, using the
norm-controlled operator $S_\ep$. The only thing to check is that for all $f\in \Sigma$,
$$
\what{\pi}_2(f)W_\ep- W_\ep\what{\pi}_1(f) \stackrel{\epsilon}{\sim} 0.
$$
Let $f\in \Sigma$, we first show that the required relation holds for any $f'\in C(X, A_c)$ such that $||f-f'||_\infty \leq \epsilon$.
First note that by the localization of the support of  $S_\ep$, for $f'\in C(X, A_c)$, the sum defining $\what\pi_2(f')W_\ep$ is again finite. Moreover, the
number of terms in the finite sum is independent of the operator $S_\ep$ itself, and therefore independent of $\epsilon$.

We have from the computations in the proof of Theorem \ref{GequivVRiso2}, for $f'$ as above,
$$
\what{\pi}_2(f')W- W\what{\pi}_1(f')=   \sum_{g\in \Gamma} (\id_{H_2\oplus H_1}\otimes U_g)\circ g{\left[\left(\what\pi_2(g^{-1}f')S-S\what\pi_1(g^{-1}f')\right) \pi_1(\sqrt{\chi})\right]}
$$
{This sum is again over a finite subset $\Gamma(\chi,f')$ of $\Gamma$,  due to the assumption of uniform proper action and given the support condition for  $S$.} As before, if we let $K_g(\chi, f')$ denote the compact operator $[\what\pi_2((g^{-1})^*f')S_\ep-S_\ep\what\pi_1((g^{-1})^*f')]$, we have $K_g(\chi, f')\stackrel{\epsilon}{\sim} 0$ for each $g \in \Gamma$,
say with the constant of inequality $C_g>0$, and hence we have
$$
||\what{\pi}_2(f')W- W\what{\pi}_1(f')||\leq \left(\max_{g\in \Gamma(\chi, f')}C_g\right). |\Gamma(\chi,f')|.\epsilon.
$$
Now as $||f-f'||\leq \epsilon$, we also have:
$$
\what{\pi}_2(f)W- W\what{\pi}_1(f)\stackrel{\epsilon}{\sim} 0
$$
{In the cocompact and metric-proper case, notice that $\Gamma(\chi,f')$ is contained in the set of $g\in \Gamma$ such that $\Supp((g^{-1})^*f')\cap \overline{B_\kappa(\Supp(\chi))}\neq \emptyset$, where $\kappa$ is the diameter of the cutoff function $\chi$.}
This ends the proof.
\end{proof}

We have now completed the proof of Theorem \ref{GequivVRiso}.
%
%

\medskip

\section{Application to equivariant Paschke duality}\label{PaschkeDuality}

As an application of our equivariant version of the PPV theorem, stated in Theorem \ref{GequivVRiso2}, we now prove the Paschke-Higson duality theorem in this context. We assume in this section that the proper $\Gamma$-space $Z$ is {\underline{cocompact}} and endowed as before with the $\Gamma$-equivariant metric $d$ so that closed balls are compact subspaces of $Z$, said differently the metric space $(Z, d)$ is proper. Recall that $A$ is a proper $\Gamma$-algebra over $Z$ and that we have assumed that $C_0(Z)$ maps {inside  $A$ itself}.
Recall from \cite{BR2}, that associated with the proper
metric space $Z$ and a proper action of the groupoid $G=X\rtimes \Gamma$ on the $C^*$-algebra $C(X, A)$, we can define the $G$-equivariant Roe algebras, which will be denoted as
$D^*_\Gamma(X, A; \ell^2\Gamma^\infty\otimes H)$ and  $C^*_\Gamma(X, A; \ell^2\Gamma^\infty\otimes H)$ associated with a given ample $\Gamma$-equivariant representation of $A$ in $H$. The first one is  the closure of the space of pseudo-local $\Gamma$-invariant operators, while the second one is the ideal in the first one composed of those operators that are moreover locally compact. The
quotient algebra is denoted as $Q^*_\Gamma(X;(Z,\ell^2\Gamma^\infty\otimes L^2Z))$.

Let us recall the precise definitions  which are the immediate generalizations of the ones given in \cite{BR1} and \cite{BR2} when $A= C_0(Z)$. Let $(H, U)$ be a unitary Hilbert space representation of $\Gamma$ together with an ample $\Gamma$-equivariant representation $\pi$ of $A$.
Recall that any adjointable operator $T$ of $ \maL_{C(X)} (C(X)\otimes H)$ is given by a $*$-strongly continuous  field $(T_x)_{x\in X}$ of bounded operators on $H$.
An adjointable operator is  $G$-equivariant, if the field $(T_x)_{x\in X}$ satisfies the relations
$$
T_{x g} = U_g^{-1} T_x U_g, \quad  (x, g)\in X\times \Gamma.
$$
The space of $G$-equivariant adjointable operators is denoted as usual $\maL_{C(X)} (C(X)\otimes H)^\Gamma$. Recall also from the previous section the notion of propagation of a given operator with respect to the $C(X)$-$\Gamma$-equivariant representation $\hat\pi$ of $C(X, A)$.
We denote by $D^*_\Gamma (X, A; H)$ and $C^*_\Gamma (X, A; H)$ the corresponding Roe algebras as defined in \cite{BR1},
but for our groupoid $G$ and our specific Hilbert $G$-module $C(X)\otimes H$. More precisely,
$D^*_\Gamma (X, A; H)$ is defined as the norm closure in $\maL_{C(X)} (C(X)\otimes H)$ of the following space
$$
\{T\in \maL_{C(X)} (C(X)\otimes H)^G, T \text{ has finite propagation and }[T, \pi (f)]\in C(X, \maK(H)) \text{ for any }f\in C(X, A)\}.
$$
The ideal $C^*_\Gamma (X, A; H)$ is composed of all the elements $T$ of $D^*_\Gamma (X,A; H)$ which satisfy in addition that
$$
T \pi (f) \in C(X, \maK(H)) \text{ for any }f\in C(X, A).
$$
The finite propagation property here is supposed to hold uniformly on $X$, so $(T_x)_{x\in X}$ has finite propagation if there exists a constant $M\geq 0$ such that
for any $\varphi, \psi\in C(X, A)$ with $d (\Supp (\varphi), \Supp (\psi)) > M$, we have
$$
\pi_x (\varphi) T_x \pi_x (\psi) = 0, \; \; \forall x\in X.
$$
We thus have the short exact sequence of $C^*$-algebras
$$
0 \to C^*_\Gamma (X, A; H) \hookrightarrow D^*_\Gamma (X, A; H)  \longrightarrow Q^*_\Gamma (X, A; H) \to 0,
$$
where we have denoted by $Q^*_\Gamma (X, A; H) $  the quotient $C^*$-algebra of $D^*_\Gamma (X, A; H) $ by its two-sided closed involutive  ideal $C^*_\Gamma (X, A; H)$. The notation here is ambiguous as we don't mention the space $Z$ while the notion of propagation with respect to the representation of $A$ depends  a priori on the choice of $(Z, d)$.  The reason for this simplified notation is that the $K$-groups will not depend on this choice as we shall see below, although the identifications are not natural.

The Paschke-Higson duality theorem identifies the $K$-theory of the quotient algebra
$Q^*_\Gamma(X, A; \ell^2\Gamma^\infty\otimes H)$ with the $G$-equivariant $KK$-theory of the pair $C(X, A), C(X)$. For details about the definition of $G$-equivariant $KK$-theory the reader
is referred to the fundamental paper of Le Gall \cite{LeGall}. Since $X$ is compact here, notice though that the latter group  is naturally isomorphic to the $\Gamma$-equivariant $KK$-theory of the pair
$(A, C(X))$, see \cite{BR2}, section 4, for more details.

When $A=C_0(Z)$, we can for instance make use of the representation $\pi_{X\times Z}$ which is induced by multiplication on $\ell^2\Gamma^\infty\otimes L^2 (Z)\otimes C(X)$, where $L^2Z=L^2 (Z, \mu_Z)$ is defined
for a choice of a Borel $\Gamma$-invariant measure $\mu_Z$ on $Z$, which we shall always assume to be fully supported. This representation is fibrewise ample in the sense of Definition
\ref{FiberwiseAmple}.

We are now in position to prove Theorem \ref{Paschke2}. We  need to construct a group isomorphism
$$
\maP_*: K_*(Q^*_\Gamma (X, A; \ell^2\Gamma^\infty\otimes H)) \xrightarrow{\cong} KK^\Gamma_{*+1}(A, C(X)), \quad \quad  *= 0, 1.
$$
 We only treat the case $*=0$. The proof is again a repetition of the proof  given in \cite{BR2}, Theorem 4.1, and adapted to the more general proper $\Gamma$-algebra $A$; we sketch it here only for completeness.
 We construct a group homomorphism $\maP'_0: KK^1_{\Gamma}(A, C(X))\rightarrow K_0(Q^*_\Gamma (X, A; \ell^2\Gamma^\infty\otimes H)) $, using the equivariant PPV Theorem \ref{GequivVRiso2}.
 The homomorphism $\maP'$ will then be an inverse to the natural Paschke-Higson map
 $$
 \maP_0: K_0(Q^*_\Gamma (X, A; \ell^2\Gamma^\infty\otimes H))  \rightarrow KK^\Gamma_1 (A, C(X))
 $$

 \textbf{Step 1:} Let $[(\sigma, E, F)]\in KK^\Gamma_1 (A, C(X))$. We may assume as usual that $\sigma$ is non-degenerate and that $F$ is self-adjoint.
 Using Kasparov's stabilization theorem, we obtain a cycle of the form $[\sigma_1, H \otimes C(X), F_1]$,
 which is endowed with the transported $G$-action via the Kasparov isomorphism $E\oplus (H\otimes C(X))\cong H\otimes C(X)$. Note that the summand $H\otimes C(X)$
 appearing on the left side of the isomorphism is endowed with its canonical $G$-action induced by the action of $G$ on $C(X, A)$. It is easy to check that the latter cycle
 lies in the same $KK_1^\Gamma$-class as $[\sigma, E, F]$.

 \textbf{Step 2:} Embed $H\otimes C(X)$ equivariantly in $\ell^2\Gamma\otimes H\otimes C(X)$ via an equivariant isometry
  $S: H\otimes C(X)\rightarrow  \ell^2(\Gamma)\otimes H\otimes C(X)$, defined by the following formula which uses the cut-off function $\chi \in C_c (Z)$ as in the previous section:
 $$
 S(e)= \sum_{g\in \Gamma} \delta_{{{g^{-1}}}}\otimes \sigma_1(g\sqrt{\chi})(e) \quad \text{ for }e\in \maE_c,
 $$
  where $\maE_c = \pi (A_c) (H\otimes C(X))$, the $G$-action on $H\otimes C(X)$ is given by the action $V$ from Step 1, while the $G$-action on $\ell^2(\Gamma)\otimes H\otimes C(X)$ is given by the
  right regular representation of $\Gamma$ on $\ell^2\Gamma$ tensored by the same action $V$.

  Now, $\left(S\sigma_1(\bullet)S^*, SS^*(\ell^2\Gamma\otimes H\otimes C(X)), SF_1S^*\right)$ is equivalent to $(\sigma_1, H\otimes C(X), F_1)$, and
  after adding a suitable degenerate cycle, we get the cycle $\left(\sigma_2:= \id_{\ell^2\Gamma}\otimes \sigma_1, \ell^2\Gamma\otimes H\otimes C(X), F_2:=(F_1\oplus \id)\right)$ which is
  still in the same $KK^\Gamma_1$-class as $(\sigma_1, H\otimes C(X), F_1)$. For details of this construction we refer the reader to \cite{BR2}, see Step 2 of the proof of Theorem 4.1 there.

  \textbf{Step 3}: Add further degenerate cycles to $[\sigma_2, \ell^2\Gamma\otimes H\otimes C(X), F_2]$ we may pass to a new $\Gamma$-equivariant Kasparov cycle
  $\left({{\sigma_2^\infty:=}}\id_{\ell^2\N}\otimes \sigma_2, \ell^2\Gamma^\infty \otimes H\otimes C(X), F_2^\infty:=\diag(F_2, \id, \id...)\right)$ which represents
  the same $KK^\Gamma_1$-class. We further add the degenerate cycle $\left(0,\ell^2\Gamma^\infty\otimes H\otimes C(X), 0\right)$
  to $[\sigma_2^\infty,  \ell^2\Gamma^\infty\otimes H\otimes C(X), F^\infty_2]$ with the $\Gamma$-action
  now taken as the one coming canonically from the $\Gamma$-action on {$H\otimes C(X)$} tensored with the right regular representation on the factor $\ell^2\Gamma$ and extended trivially
  on $\ell^2\N$. We obtain in this way  a new $\Gamma$-equivariant Kasparov cycle
  $$
  \left(\sigma_3:= \sigma^\infty_2\oplus 0,\ell^2\Gamma^\infty\otimes (H\oplus H)\otimes C(X), F_3:= F_2{{^\infty}}\oplus 0\right)
  $$
  still remaining in the same $KK^\Gamma_1$-class.

  \textbf{Step 4:}  We can now apply Theorem \ref{GequivVRiso2}, to get a $\Gamma$-invariant $C(X)$-adjointable unitary $W$ such that
  $$
  W \sigma_3(f)W^*- (\what\pi^\infty(f)\oplus 0) \in \maK_{C(X)}\left(\ell^2\Gamma^\infty\otimes (H\oplus H)\otimes C(X)\right),\quad  \text{ for all } f\in C(X, A).
  $$
 where $\pi^\infty: C(X, A)\rightarrow \maL_{C(X)}(\ell^2\Gamma^\infty\otimes H\otimes C(X))$ is induced by the ample representation $\pi$ of $A$ in the Hilbert module $H\otimes C(X)$
 and extended by the identity on $\ell^2\Gamma^\infty$. By Kasparov's homological equivalence Lemma (see \cite{BR2}, Appendix B), the cycles
  $$
  \left(\sigma_3,\ell^2\Gamma^\infty\otimes (H\oplus H)\otimes C(X), F_3\right)\quad \text{ and }\quad
 \left(\pi^\infty \oplus 0, \ell^2\Gamma^\infty\otimes (H\oplus H)\otimes C(X), F_4\right),
 $$
live in the same $KK^\Gamma_1$-class, where $F_4:= WF_3W^*$.

 \textbf{Step 5:} Let {$F_5$  be the $(1,1)$-entry in the $2\times 2$-matrix decomposition of $F_4$, corresponding to the direct sum
  $\ell^2\Gamma^\infty\otimes (H\oplus H)\otimes C(X)$ . Then
  the cycle
  $$
  [\what\pi^\infty \oplus 0, \ell^2\Gamma^\infty\otimes (H\oplus H)\otimes C(X), F_4]
  $$
  is in the same $KK^\Gamma_1$-class as the
  cycle $[\what\pi^\infty, \ell^2\Gamma^\infty\otimes H\otimes C(X), F_5]$}.

  \textbf{Step 6:} {Replace the operator $F_5$  by a $\Gamma$-invariant finite propagation operator $F_6$ as usual  by averaging $\sqrt{\chi}F_5\sqrt{\chi}$. We define the inverse map $\maP':  {KK^{\Gamma}_{1}(C(X, A), C(X))}\rightarrow K_0(Q^*_\Gamma (X, A; \ell^2\Gamma^\infty\otimes H)$ by setting
  $$
  \maP'([\sigma, E, F]):= {\left[q\left(\frac{1}{2}(\Id_{\ell^2\Gamma^\infty\otimes H\otimes C(X)} + F_6)\right)\right]}
  $$}
  where $q: D^*_\Gamma (X, A; \ell^2\Gamma^\infty\otimes H)\rightarrow Q^*_\Gamma (X, A; \ell^2\Gamma^\infty\otimes H)$ is the quotient projection.

  The map $\maP'$ is well-defined and a bijective group homomorphism, following the same arguments as in the compact case in \cite{BR2}, Theorem 4.1. Hence the proof of our Paschke-Higson theorem is now complete.

\appendix

\section{Localized operators on uniformly proper $\Gamma$-spaces}\label{LocalizedOperators}

{We prove in this appendix some standard results about supports of our localized operators that are used in some proofs.
Let us fix  a non-degenerate $*$-representation  $\pi: C_0(Z)\to \maL (H)$ of the $C^*$-algebra $C_0(Z)$ in the separable Hilbert space $H$, that we extend to $C_b(Z)$ as usual.
Recall that $\Gamma$ acts uniformly properly on $Z$ and that $\chi$ is a chosen {uniform} continuous cutoff function. }

{We shall use the following notations for an operator $T\in \maL(H)$}:
$$
\Supp (T)_z:=\{z'\in Z\vert (z', z)\in \Supp (T)\}, \quad \quad \Supp (T)^{z'}:=\{z\in Z\vert (z', z)\in \Supp (T)\}
$$

%
%
%

Notice that if $W_\chi=\{\chi\neq 0\}$  then $
Z = \bigcup_{g\in \Gamma} g W_\chi.$
We  denote as in Section \ref{Statements} for any $k\geq 1$:
$$
\Gamma_\chi^{(k)} := \{(g, g')\in \Gamma^2\vert \exists (g_i)_{0\leq i\leq k-1} \text{ such that }g_iW_\chi\cap g_{i+1}W_\chi \neq \emptyset\text{ and }g_0=g, g_{k}=g'\}.
$$
For $k=0$, we set $\Gamma_\chi^{(0)}=\Gamma$ viewed as the diagonal of $\Gamma^2$.
Notice that $\Gamma_\chi^{(k)} \subset \Gamma_\chi^{(k+1)}$ for any $k$, and that $\bigcup_{k\geq 0} \Gamma_\chi^{(k)} = \Gamma^2$.
Recall that the uniform properness of the action means that the first (or the second) projection $\Gamma^2\to \Gamma$ becomes proper when restricted to $\Gamma_\chi^{(1)}$.
It is an obvious observation that if the proper $\Gamma$-space $Z$ is cocompact, then the action of $\Gamma$ on $Z$ is automatically uniformly proper since the support of {$\chi$} can then be taken compact, so that $\{g\in \Gamma\vert g\Supp (\chi)\cap \Supp (\chi) \neq \emptyset\}$ is finite.

Set $\maA_k := \bigcup_{(g, g')\in \Gamma_\chi^{(k)}} gW_\chi\times g'W_\chi$,
then it is easy to check using the properties of $W_\chi$  that for any $k\geq 0$ the closure of $\maA_k$ is contained $\maA_{k+2}$.
\begin{definition}[Localized operators]\label{LocalizedOp}
 {An operator $T\in \maL (H)$ is said to have \emph{localized support} if there exists $k\geq 0$ so that  $\Supp (T)$ is contained in (the closure of) some $\maA_k$ with $k\geq 0$.}

 The least $k$ such that the support of $T$ is contained in $\maA_k$ will be called the propagation index of $T$ (with respect to $\chi$). {For brevity, we shall call an operator with finite propagation index a \textit{localized} operator.}
\end{definition}

For a localized operator $T$ with propagation index $k$ and  if we denote by $\Gamma_z$ the finite subset of $\Gamma$ composed of those $g$ for which $z\in gW_\chi$, then for any $z\in Z$ we have:
$$
\Supp (T)_z\subset \bigcup_{g\in \Gamma_z} \;\; \bigcup_{g'\vert (g,g')\in \Gamma_\chi^{(k)}} g'W_\chi.
$$

{\begin{proposition}
Assume that $Z$ is a proper cocompact $\Gamma$-space with a $\Gamma$-invariant distance $d$ such that $Z$ is a metric-proper space. Then localized operators coincide with finite propagation operators.
\end{proposition}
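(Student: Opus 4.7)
I would prove both inclusions separately. Choose $\chi$ compactly supported, which is possible by cocompactness, and set $\kappa := \diam(\Supp(\chi)) < \infty$.

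For the forward direction (localized implies finite propagation), suppose $\Supp(T) \subset \overline{\maA_k}$. Given any $(z_1,z_2) \in \maA_k$ with witnessing chain $g_0, g_1, \ldots, g_k$, select intersection points $y_i \in g_{i-1}W_\chi \cap g_i W_\chi$ for $1 \leq i \leq k$. Since $\Gamma$ acts by isometries, $\diam(gW_\chi) \leq \kappa$ for every $g \in \Gamma$, so the triangle inequality applied to the chain $z_1, y_1, \ldots, y_k, z_2$ gives $d(z_1,z_2) \leq (k+1)\kappa$. This bound is preserved under closure, so $T$ has finite propagation at most $(k+1)\kappa$.

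For the reverse direction (finite propagation implies localized), suppose $T$ has propagation at most $R$, so $\Supp(T) \subset \{(z_1,z_2) : d(z_1,z_2) \leq R\}$. By $\Gamma$-equivariance of both the metric and the cover $\{gW_\chi\}_{g \in \Gamma}$, it suffices to exhibit a uniform $k$ such that for every $g' \in \Gamma$ with $d(W_\chi, g'W_\chi) \leq R$ one has $(e, g') \in \Gamma_\chi^{(k)}$. Set $K := \overline{B_R(\Supp(\chi))}$, which is compact by the proper metric structure together with the boundedness of $\Supp(\chi)$. By local finiteness coming from uniform properness, the set $\Gamma_R := \{g \in \Gamma : gW_\chi \cap K \neq \emptyset\}$ is finite, and the corresponding translates form a finite open cover of $K$. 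Let $\delta > 0$ be its Lebesgue number. Using $\delta$-chains inside $K$ to connect $W_\chi$ to $g'W_\chi$ for each $g' \in \Gamma_R$, one obtains a graph-path in the overlap graph of bounded length; taking the maximum over the finite set $\Gamma_R$ yields the desired $k$.

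The main obstacle is the reverse direction, specifically producing the bounded chain in the overlap graph. The crucial inputs are cocompactness (which bounds $W_\chi$ and makes $\Gamma_R$ finite), properness of $d$ (which makes $K$ compact), and the Lebesgue number for the resulting finite subcover of $K$ (which converts small-scale overlap information into a uniform bound on chain lengths). In borderline cases where the overlap graph restricted to $\Gamma_R$ might be disconnected, one may slightly enlarge $\chi$ while preserving compact support, so that the translates of $W_\chi$ provide sufficient overlap to connect all relevant $g' \in \Gamma_R$ to $e$.
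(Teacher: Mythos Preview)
Your forward direction is correct and matches the paper's argument (the paper writes the bound as $k\,d_\chi$ rather than $(k+1)\kappa$, but the idea is identical).

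For the reverse direction you work much harder than necessary, and the Lebesgue-number argument has a real gap. The paper's route is the following: once you know that $\Gamma_R$ (your set of $g'$ with $g'W_\chi$ meeting $K=\overline{B_R(\Supp(\chi))}$) is finite, simply use the fact, recorded just before the definition of localized operators, that $\bigcup_{k\geq 0}\Gamma_\chi^{(k)}=\Gamma^2$. Thus every $(e,g')$ with $g'\in\Gamma_R$ lies in some $\Gamma_\chi^{(k_{g'})}$, and taking $k=\max_{g'\in\Gamma_R}k_{g'}$ finishes the proof. No covering-number or chain argument is needed.

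Your $\delta$-chain construction, by contrast, tacitly assumes that any two points of the compact set $K$ can be joined by a $\delta$-chain inside $K$; this requires $K$ to be connected, which need not hold (nothing in the hypotheses forces $Z$, let alone $K$, to be connected). Your proposed patch of ``slightly enlarging $\chi$'' does not help: if $Z$ itself has several components permuted by $\Gamma$, enlarging $\chi$ within its component cannot create overlaps with translates lying in other components, so the overlap graph restricted to $\Gamma_R$ can remain disconnected no matter how you adjust $\chi$. The missing ingredient is precisely the global statement $\bigcup_k\Gamma_\chi^{(k)}=\Gamma^2$, which bypasses all local connectivity considerations.
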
}

\begin{proof}
{We can find a cutoff function $\chi$ which is compacty supported in $Z$ and hence whose support has finite diamater. An operator $T$ is localized with propagation index $\leq k$ if and only if its support is contained in $\maA_k$.
Hence denoting by $d_\chi$ the diamater of $W_\chi$ in $Z$ which is equal to the diameter of any translate $gW_\chi$ for $g\in \Gamma$, we see that for any $(z, z')\in \Supp (T)$, we have by
$$
d(z, z') \leq k d_\chi.
$$
Hence $T$ has finite propagation $\leq kd_\chi$. If conversely $T$ has finite propagation $\kappa$. For any $(z, z')\in \Supp (T)$, we have $d(z, z')\leq \kappa$ and we also know that  there exists $g_1\in \Gamma$ such that $z\in g_1W_\chi$.  Since $Z$ is metric-proper, there exists  a finite subset $\Gamma_\kappa$ of $\Gamma$ such that the closed ball neighborhood $B_\kappa  :=\{z\in Z\vert d(z, \Supp (\chi))\leq \kappa\}$ of the compact space $\Supp (\chi)$ is contained in $\cup_{g\in \Gamma_\kappa} gW_\chi$. Moreover, let us denote by $k$ the least integer such that for any $g\in \Gamma_\kappa$, we have $(e, g)\in \Gamma_\chi^{(k)}$, with $e$ being the neutral element of $\Gamma$. To sum up we  know that  $z\in g_1W_\chi$ while $d(z, z')\leq \kappa$ so that $z'\in \bigcup_{g\in \Gamma_\kappa} \; g_1g W_\chi$, and henceforth
$$
(z, z')\in  \bigcup_{g\in \Gamma_\kappa} \; g_1W_\chi \times g_1 g W_\chi \subset \bigcup_{(g, g')\in \Gamma_\chi^{(k)}} \; gW_\chi \times g' W_\chi = \maA_k,
$$
and $k$ is of course independent of the chosen $(z, z')\in \Supp (T)$.}
\end{proof}

\begin{proposition}\
{The space  of localized operators is unital $*$-subalgebra of $\maL (H)$. Moreover,
\begin{enumerate}
\item the propagation index of the adjoint  is equal to the propagation index of the given localized operator.
\item the propagation index of the sum of two localized operators is $\leq$ to the maximum of the propagation indices.
\item the  propagation index of the composition of two localized operators  is $\leq 3 + $ the sum of the propagation indices.
\end{enumerate}}
\end{proposition}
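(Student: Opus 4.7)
The plan is to reduce each of the four assertions to the formal properties of the support relation on $Z\times Z$ associated with the representation $\pi$, combined with the combinatorics of the sets $\Gamma_\chi^{(k)}$ and the key inclusion $\overline{\maA_k}\subset \maA_{k+2}$ noted just before the proposition.

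First, I would record the three standard facts about supports of operators (which follow immediately from the definition of $\Supp(T)$ as the complement of the union of open bipropagation-free rectangles): namely $\Supp(T^*)$ is the flip of $\Supp(T)$ under the coordinate swap $(z,z')\mapsto(z',z)$; $\Supp(T_1+T_2)\subseteq \Supp(T_1)\cup\Supp(T_2)$; and $\Supp(T_1T_2)\subseteq \overline{\Supp(T_1)\circ\Supp(T_2)}$, where $\circ$ denotes the usual relational composition, $(z,z'')\in R_1\circ R_2$ iff some $z'\in Z$ satisfies $(z,z')\in R_1$ and $(z',z'')\in R_2$. I would also check that the identity is localized of index $0$: because $\{gW_\chi\}_{g\in\Gamma}$ covers $Z$, every diagonal point $(z,z)$ lies in some $gW_\chi\times gW_\chi\subset \maA_0$.

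With these preliminaries, (1) and (2) are essentially free. For (1), reversing the witnessing sequence $g_0,\ldots,g_k$ shows that $\Gamma_\chi^{(k)}$ is symmetric under coordinate swap, so $\maA_k$ is itself symmetric; combined with the fact that $\Supp(T^*)$ is the flip of $\Supp(T)$, this yields equality of propagation indices. For (2), the inclusion $\Supp(T_1+T_2)\subseteq \maA_{k_1}\cup\maA_{k_2}\subseteq \maA_{\max(k_1,k_2)}$ is immediate from the monotonicity $\maA_k\subset\maA_{k+1}$.

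The main obstacle, and the only nontrivial step, is (3). I would argue as follows. Take $(z,z'')\in \Supp(T_1)\circ\Supp(T_2)$ with witness $z'$. By the hypothesis, there exist $(g_1,g_2)\in \Gamma_\chi^{(k_1)}$ with $z\in g_1W_\chi$, $z'\in g_2W_\chi$, and $(h_1,h_2)\in \Gamma_\chi^{(k_2)}$ with $z'\in h_1W_\chi$, $z''\in h_2W_\chi$. Since $z'\in g_2W_\chi\cap h_1W_\chi$, the pair $(g_2,h_1)$ lies in $\Gamma_\chi^{(1)}$. Concatenating the three witnessing sequences of lengths $k_1$, $1$, and $k_2$ produces a sequence of length $k_1+k_2+1$ showing $(g_1,h_2)\in \Gamma_\chi^{(k_1+k_2+1)}$, hence $(z,z'')\in \maA_{k_1+k_2+1}$. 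Therefore
\[
\Supp(T_1T_2)\subseteq \overline{\Supp(T_1)\circ\Supp(T_2)}\subseteq \overline{\maA_{k_1+k_2+1}}\subseteq \maA_{k_1+k_2+3},
\]
using the key closure inclusion $\overline{\maA_k}\subset\maA_{k+2}$ stated in the paper; this yields the announced bound. The ``$+3$'' rather than the geometrically expected ``$+1$'' is precisely the cost of passing from the relational-composition set to its closure.
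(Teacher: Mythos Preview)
Your proposal is correct and follows essentially the same route as the paper's proof: the paper likewise invokes $\Supp(T^*)=\sigma(\Supp(T))$ for (1), the inclusion $\Supp(T+S)\subset\Supp(T)\cup\Supp(S)$ for (2), and for (3) concatenates the two witnessing $\Gamma$-sequences through the common intermediate point to land in $\maA_{k_1+k_2+1}$ before applying $\overline{\maA_k}\subset\maA_{k+2}$. Your write-up is in fact slightly more explicit than the paper's in spelling out the symmetry of $\Gamma_\chi^{(k)}$ and the closure step for the product.
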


\begin{proof}\
{{The first item is clear since the one has the relation $\Supp(T^*)= \sigma(\Supp(T))$, where $\sigma:Z\times Z\rightarrow Z\times Z$ is the involution $(z,z')\mapsto (z',z)$}. The support of the identity operator is the diagonal in $Z^2$ which is contained in $\Gamma_\chi^{(0)}$. Take two localized operators $T$ and $S$ with propagation indices $k$ and $k'$ respectively. The support of the sum $T+S$ is obviously contained in $\Supp (T)\cup\Supp (S)$.  Therefore
$$
\Supp (T+S) \subset \maA_k \cup \maA_{k'} = \maA_{\max(k,k')}.
$$
On the other hand, for any $(z,z'')$ such that  $\Supp (T)_z\cap \Supp (S)^{z''} \neq \emptyset$,  and denoting   the propagation index  of $T$ by $k$ and the propagation index of $S$ by $k'$, there exists $(g_0, \cdots, g_k)\in \Gamma^k$ and $(g'_0, \cdots, g'_{k'})\in \Gamma^{k'}$ such that
\begin{multline*}
z\in g_0W_\chi, z''\in g'_{k'}W_\chi, g_iW_\chi\cap g_{i+1}W_\chi\neq \emptyset \text{ for }0\leq i\leq k-1, \\ g_kW_\chi\cap g'_0W_\chi\neq \emptyset  \text{ and }g'_{j}W_\chi\cap g'_{j+1}W_\chi\neq\emptyset\text{ for }0\leq j\leq k'-1.
\end{multline*}
Hence $(z, z'')\in \maA_{k+k'+1}$. Hence,  using that the support of $TS$ is contained the closure of $\{(z, z'')\in Z^2\vert \Supp (T)_z\cap \Supp (S)^{z''} \neq \emptyset\}$ and the inclusion
$$
\overline{\maA_{k+k'+1}} \subset \maA_{k+k'+3}
$$
 we deduce that the support of $TS$ is contained in $\maA_{k+k'+3}$.}
\end{proof}


\begin{remark}\label{RoeLocalized}
{The analogously defined Roe $C^*$-algebras of locally compact and pseudolocal operators with \emph{localized support}, can hence be defined in our more general setting of non-cocompact uniformly proper actions.}
\end{remark}

\section{The norm-controlled PPV theorem}\label{ControlPPV}

{In this section we give a norm-controlled version of the PPV theorem \cite{PPV}[Theorem 2.10]. This is a folklore-type result which nevertheless is not found in the literature to the best of
our knowledge.}

Let $X$ be a finite dimensional compact metrizable space, $A$ a unital separable $C^*$-algebra and $H$ an infinite-dimensional separable Hilbert space. Denote $UCP(A, M_n)$ the space of unital, completely
positive maps from $A$ to $M_n(\C)$, equipped with the point-norm topology. {We shall denote by $\maL(H)_{*s}$ the algebra of bounded linear operators on $H$ equipped with the strong-$*$ topology}.

\begin{proposition}[Proposition 2.8 in \cite{PPV}]\label{PPViso1}
Consider an {exact} $X$-extension
$$
0\rightarrow C(X, K(H))\hookrightarrow B\xrightarrow{\sigma} A\rightarrow 0
$$
with ideal symbol $\{I_x\}_{x\in X}$ and $\Psi: X\rightarrow UCP(A, M_n)$ be a continuous map such that $\Psi(x)|_{I_x}=0$ for all $x\in X$. Then, given $\epsilon>0$, $V\subset H$ and
$1\in W\subset B$ finite-dimensional subspaces, there exists a norm-continuous map $U: X\rightarrow \maL(\C^n, H)$ such that
$$
U^*(x)U(x)= \id_{\C^n}, \quad \quad U(x)(\C^n) \perp V, \quad \forall x\in X
$$

and
$$
||\Psi(x)(\sigma(b))- U^*(x)b(x)U(x)||\leq \epsilon ||b|| \quad \quad \forall x\in X, b\in W.
$$

The linear span of $\{U(x)\C^n\}_{x\in X}$ in $H$ is finite-dimensional.
\end{proposition}

Using Proposition \ref{PPViso1}, one gets the following:

\begin{corollary}\label{PPVlim}
Consider an $X$-extension
$$
0\rightarrow C(X, K(H))\hookrightarrow B\xrightarrow{\sigma} A\rightarrow 0
$$
with exact  ideal symbol $\{I_x\}_{x\in X}$ and let $\Psi: X\rightarrow UCP(A, M_n)$ be a continuous map such that $\Psi(x)|_{I_x}=0$ for all $x\in X$. Let $V$ be a finite-dimensional subspace of $H$.
Then there exists a sequence of norm-continuous maps
$U_k: X\rightarrow  \maL(\C^n, H)$ such that
\begin{enumerate}
 \item $U^*_k(x)U_k(x)= \id_{\C^n} \quad U_k(x)(\C^n) \perp V \quad\quad \forall x\in X, \forall k\in \N$
 \item $\lim_{k\rightarrow \infty} \sup_{x\in X} ||\Psi(x)(\sigma(b))- U^*_k(x)b(x)U_k(x)|| = 0 \quad \quad \forall b\in B$, and
 \item $\lim_{k\rightarrow \infty} \sup_{x\in X} ||U_k^*(x) \eta(x) || =0 \quad  \forall \eta \in C(X,H)$.
 \item {The linear span of $\{U_k(x)\C^n\}_{x\in X}$ in $H$ is finite-dimensional for each $k \in \N$.}
\end{enumerate}

\end{corollary}

\begin{proof}
{Fix a convergent sequence $F_0=\{b_i\}_{i \in \N}$ in $B$ containing $1\in B$, such that $F_0=F_0^*$, $||b_i||\leq 1$, for all $i$, and the linear span of $F_0$ is dense in $B$.
{Recall that $B$ is separable here since it is an extension algebra}. Set $F:=\{b_\infty, b_\infty^*\} \cup F_0$ where $b_\infty$ is $\lim b_i$.}
Since $F$ is compact, for each $k\in \N$, there exists $N_k\in \N$ and a finite set $F_k:=\{b_{i_m}\}_{m=1}^{N_k}$ which includes $1\in B$,
such that for any $b_i\in F$ there exists an index $m\in \{1,2,..., N_k\}$ such that
$||b_{i_m}-b_i||<1/3k$.

Let $\{e_n\}$ be an orthonormal basis for $H$. Denote by $P_j$ the linear span of $\{e_1, e_2, ..., e_j\}$.
For each $k\in \N$, we iteratively apply Proposition \ref{PPViso1} by taking $V_k= \span\{V, P_k\}$ and $W_k= F_k$ and $\epsilon_k= 1/3k$. We thus obtain norm-continuous maps $U_k: X\rightarrow \maL(\C^n, H)$ such that
$$
U^*_k(x)U_k(x)= \id_{\C^n} \quad U_k(x)(\C^n) \perp V_k \quad\quad \forall x\in X, \forall k\in \N
$$
which shows that (1) is satisfied. We also have

$$
\sup_{x\in X} ||\Psi(x)(\sigma(b))- U^*_k(x)b(x)U_k(x)|| \leq 1/3k \quad \quad \forall b\in W_k
$$

Now, for any $b\in F$, there exists an element $b'\in W_k$ such that $||b-b'||<1/3k$, then we get for any $x\in X$,
\begin{eqnarray*}
 ||\Psi(x)(\sigma(b))- U^*_k(x)b(x)U_k(x)|| &\leq & ||\Psi(x)(\sigma(b)-\Psi(x)(\sigma(b')|| + ||\Psi(x)(\sigma(b'))- U^*_k(x)(b')U_k(x)|| \\
 && + ||U_k^*(x)(b-b')U_k(x)|| \\
 &\leq & 1/3k+ 1/3k+1/3k = 1/k\\
\end{eqnarray*}
 where we have used the fact that $||\Psi||=1$ (since $A$ is unital) and $U_k(x)U_k^*(x)$ is an  {orthogonal} projection for all $x\in X$, so $||U_k(x)U^*_k(x)||{=} 1$. Thus (2) is established for all $b\in F$. Since $F$ spans $B$, another density argument then gives the result for all $b\in B$.

 To check (3), let $\epsilon>0$ and note that if $\eta \in P_j$ for some $j$, then $<\eta, U_k(x)U^*_k(x)\eta>=0$ for all $k>j$, since range of $U_k(x)$ is perpendicular to $P_k$.
 Now let $\eta=\sum_{i=1}^\infty \alpha_i e_i$, choose $N_0$ such that $||\eta-\sum_{i=1}^{N_0} \alpha_ie_i||<\epsilon$. Then for any $k$, we have
 $$
 ||U_k^*(x)\eta||\leq ||U_k^*(x)(\eta-\sum_{i=1}^{N_0} \alpha_ie_i)||+ ||U_k^*(x)(\sum_{i=1}^{N_0} \alpha_ie_i)||
 $$

 since $\sum_{i=1}^{N_0} \alpha_ie_i\in P_{N_0}$, the second term above is zero for $k>N_0$ for all $x\in X$. Therefore one gets
 $$
 \sup_{x\in X} ||U^*_k(x)\eta||\leq \epsilon \quad \quad \forall k>N_0, \forall x\in X.
  $$
 which establishes (3) in the case when $\eta\in C(X, H)$ is constant in the $X$-variable. To deal with the general case, let for each $x\in X$, $W_x$ be an open neighbourhood of $x$ such that for any $x'\in W_x$, we have:
 $$
 ||\eta(x)- \eta(x')||\leq \epsilon/2
 $$

 Since $X$ is compact we get a finite collection $\{W_{x_i}\}_{i=1}^m$ of such open neighbourhoods with centers $\{x_i\}_{i=1}^m$. Choose $N_0$ large enough such that
 $$
 \sup_{x\in X}||U_k^*(x)\eta(x_i)||\leq \epsilon/2 \text{   for all $k\geq N_0$ and for all } i=1,2,\cdots, m.
 $$

 Then we have for any $k\geq N_0$ and $x\in W_{x_i}$ for some $i$,
 $$
 ||U_k^*(x)\eta(x)||\leq ||U^*_k(x)(\eta(x)- \eta(x_i))||+||U^*_k(x)\eta(x_i)||\leq \epsilon
 $$
 This proves (3). {The last item (4) follows from the last line of Proposition (\ref{PPViso1}).}
\end{proof}

\begin{remark}
 In the proof above one can also take any countable approximate unit $\{a_k\}_{k\in \N}$ for $C(X, \maK(H))$ consisting of increasing sequence of finite-rank operators which are constant in $X$,
 and take $V_k= \span\{V, P_k\}$ where $P_k$ is the projection onto the range of $a_k$.
\end{remark}

Using the above result, we can now give a strengthening of Proposition 2.9 in \cite{PPV}. Denote by $d_x: C(X, \maL(H)_{*s})\rightarrow \maL(H)$ the evaluation map.
We keep the notations used above.

%

\begin{theorem}\label{PPVseq}
{{Given a trivial $X$-extension by $A$ with exact lsc ideal symbol $\{I_x\}_{x\in X}$:
$$
0\rightarrow C(X, \maK(H))\hookrightarrow B_1\xrightarrow {\sigma_1} A\rightarrow 0
$$
which is implemented by a unital $*$-homomophism $\mu_1: A\rightarrow C(X, \maL(H)_{*s})$ and another arbitrary $X$-extension with same ideal symbol $\{I_x\}_{x\in X}$, whose
extension algebra is $B\subseteq C(X, \maL(H)_{*s})$ for some infinite-dimensional separable Hilbert space $H$:
$$
0\rightarrow C(X, \maK(H))\hookrightarrow B\xrightarrow {\sigma} A\rightarrow 0
$$
Let $F$ be a compact subset of $B$ such that $F=F^*$, $1\in F$ and the linear span of $F$ is dense in $B$. Given $\epsilon>0$,
there exists an isometry $S \in C(X, \maL(H)_{s^*})$ such that}}
\begin{enumerate}[label=({\roman*})]
   \item ${bS- S\mu_1(\sigma(b))}\in C(X, \maK(H))$ \text{ for all } $b\in B$.
   \item {$\forall b\in F$, $\exists C$ independent of $\epsilon$ such that  $||S\mu_1(\sigma(b))- bS||\leq C\epsilon$.}
\end{enumerate}
\end{theorem}

\begin{proof}
{Recall that $B_1$ be the unital $C^*$-algebra generated by the image of $\mu_1$ and $C(X, \maK(H))$. Let} $\{a_k\}_{k=0}^\infty$ be a quasi-central approximate unit for $C(X, \maK(H))$ consisisting of an increasing sequence of constant (in the $X$-variable) finite-rank operators
$0=a_0\leq a_1\leq a_2\cdots$, $||a_k||\leq 1$, and
$$
\lim_k|| a_kl-l||=0,  \forall l\in C(X,\maK(H))\quad \text{and}\quad \lim_k ||[a_k,h]||=0,  \forall h\in B_1.
$$
where $[x,y]$ denotes the commutator $xy-yx$. Let $F$ be a compact, self-adjoint subset of the unit ball of $B$ whose span is $B$. Passing to a subsequence if necessary, we may assume that
$$
||[\mu_1(\sigma(b)), (a_k-a_{k-1})^{1/2}]||\leq \epsilon/2^k \quad \quad \quad \forall b\in F, k\geq 1.
$$
Let $Q_k$ be the constant orthogonal projection onto the range of $a_k$ for each $k\geq 1$. Using Corollary \ref{PPVlim},
we iteratively define a sequence of compact operators $U_k \in C(X, \maK(H)), k\in \N$ whose initial projections are the range of $a_k$ and final projections are of uniformly finite rank,
and an increasing sequence of finite-rank projections $R_k, k\in \N$ on $H$,
converging strongly to the identity, such that we have for all $k\geq 1$:
\begin{enumerate}
 \item $U_k^*(x)U_k(x)=Q_k$, for all $x\in X$.
 \item $\Range({U_k(x)})\subseteq (R_{k+1}-R_k)(H)$, for all $x\in X$.
 \item $\Range(U_k(x))\perp \Range(U_{k'}(x'))$ for all $x, x'\in X$ for all $k'<k$.
 \item $||{Q_k}\mu_1(\sigma(b))(x){Q_k} - U_k^*(x)b(x)U_k(x)||\leq \epsilon/2^k$, for all $x\in X$, $b\in F$.
 \item $||U_i^*(x)b(x)U_j(x)||\leq \epsilon/2^{i+j}$, for all $b\in F$, $x\in X$, $i\neq j$.
\end{enumerate}

Some remarks are in order. The first property is clear from the construction in Corollary \ref{PPVlim}; the existence of the finite-rank operators $R_k$ in the property (2)
also follows from the fact that the $U_k$ themselves are of uniformly finite-rank. The third property can be obtained in the construction of $U_k$ by adding the ranges of all the $U_{k'}$ for
$k'<k$ in the choice of the finite-dimensional space $V$ in Corollary \ref{PPVlim}. The fourth property is simply obtained by taking the completely positive map $\Psi$ in Corollary \ref{PPVlim}
to be ${Q_k}\mu_1(\bullet){Q_k}$. The last property (5) can be obtained from item (3) in Corollary \ref{PPVlim}, since the initial space of each $U_j$ for $j<i$ is of
uniformly finite-dimension.

Define the operator $S\in C(X, \maL(H)_{*s})$ pointwise in the following way:
$$
S(x):= \sum_{k=1}^\infty U_k(x)(a_k-a_{k-1})^{1/2}
$$
Indeed, it suffices to use properties (1), (2), and (3) above to show that $S(x)$ is uniformly convergent in $X$ with respect to the strong-$*$ topology on $\maL(H)$.
It can also be verified directly that $S^*(x)S(x)=\id_H$, thus $S(x)$ is an isometry, using the fact that $\Range(a_k-a_{k-1})^{1/2}\subseteq \Range(Q_k)=\Range(U_{k}^*(x)U_k(x))$ for
all $x\in X$.

Let $f_k= (a_k-a_{k-1})^{1/2}$. Using the fact that $\mu_1(\sigma(b))= \sum_{k=1}^\infty \mu_1(\sigma(b))f_k^2$, where the series converges in the strict topology, we get:
\begin{equation}\label{mustrict}
\mu_1(\sigma(b))(x)- \sum_{k=1}^\infty f_k \mu_1(\sigma(b))(x) f_k= \sum_{k=1}^\infty [\mu_1(\sigma(b))(x), f_k] f_k
\end{equation}
Thus, by the assumptions on $f_k= (a_k-a_{k-1})^{1/2}$, we get $||\mu_1(\sigma(b))(x)- \sum_{k=1}^\infty f_k \mu_1(\sigma(b))(x) f_k||\leq \epsilon$,
for all $b\in F$.

Therefore, we finally get for all $b\in F$,
\begin{multline}
||{S^*(x)b(x)S(x)}- \mu_1(\sigma(b))(x)||\leq  ||\mu_1(\sigma(b))(x)- \sum_{k=1}^\infty f_k \mu_1(\sigma(b))(x) f_k|| \\+ \sum_{k=1}^\infty ||{f_k\left(\mu_1(\sigma(b))(x)- U_k^*(x)b(x)U_k(x)\right) f_k}|| +\sum_{i\neq j} ||U_i^*(x)b(x)U_j(x)|| \leq 3\epsilon
\end{multline}
by properties (4) and (5) above, { and noticing that $Q_kf_k = f_kQ_k = f_k$}.


On the other hand, we also get $\mu_1(\sigma(b))- S^*b S \in C(X,\maK(H))$ for all $b\in F$. {Indeed, writing the expression pointwise as follows:}
\begin{eqnarray*}
\mu_1(\sigma(b))(x)- S^*(x)b(x) S(x) &=& \mu_1(\sigma(b))(x)- \sum_{k=1}^\infty f_k \mu_1(\sigma(b))(x) f_k\quad (I_1)\\
&+& \sum_{k=1}^\infty f_k\left(\mu_1(\sigma(b))(x)- U_k^*(x)b(x)U_k(x)\right) f_k\quad (I_2)\\
&+& \sum_{i\neq j} f_iU_i^*(x)b(x)U_j(x)f_j\quad (I_3)\\
\end{eqnarray*}

{Using the fact that $f_k$ and $U_k(x)$ are uniformly finite-rank for each $k$, we deduce the compactness of each term in the equation above as follows. The first term $I_1$ is compact due to equation (\ref{mustrict}), whose right hand side is compact since it is norm-convergent and each of its partial sum is compact. Similarly, $I_2$ and $I_3$ are compact because the partial sums for each series are all compact and each series is norm-convergent. Since the norm continuity in the $x$-variable of the left-hand side of the above equation is clear from the construction, we conclude that $\mu_1(\sigma(b))- S^*b S \in C(X,\maK(H))$ for all $b\in F$.}

Using a density
argument as before, one can finish the proof of $(i)$ by establishing the desired properties for all $b\in B$.

{Notice also that we have the following relation for any $b\in B$,
 \begin{multline}
 (S\mu_1(\sigma(b))- bS)^*(S\mu_1(\sigma(b))- bS)= (S^*b^*bS- \mu_1(\sigma(b^*b)))+\mu_1(\sigma(b^*))(\mu_1(\sigma(b)- S^*bS) \\+ (\mu_1(\sigma(b^*))- S^*b^*S)\mu_1(\sigma(b))
 \end{multline}
 from which the claim follows. }
\end{proof}

{\begin{remark}
{set for $b\in B$, $K_1(b):=bS-S\mu_1(\sigma (b))$ then  the operator $S\in C(X, \maL(H)_{*s})$ constructed in the proof above also satisfies
$$
K_1(b) S^* - S K_1(b^*)^* = [b, SS^*].
$$
{Hence $ [b, SS^*] \stackrel{\epsilon}{\sim} 0$ for all $b\in F$}.}
%
\end{remark}}
%

{{{We may rewrite for $b\in F$},  the relations in Theorem \ref{PPVseq}, using as well the previous remark, as
$$
{S^*bS-\mu_1(\sigma(b)) \stackrel{\epsilon}{\sim} 0, bS- S\mu_1(\sigma(b))\stackrel{\epsilon}{\sim} 0,
[b,SS^*]\stackrel{\epsilon}{\sim} 0.}
$$
}}

\begin{corollary}\
{{{Let $H_1$ be a separable infinite-dimensional Hilbert space}. Given a trivial $X$-extension by $A$ with exact lsc ideal symbol $\{I_x\}_{x\in X}$:
$$
0\rightarrow C(X, \maK(H_1))\hookrightarrow B_1\xrightarrow {\sigma_1} A\rightarrow 0
$$
which is implemented by a unital $*$-homomophism $\mu_1: A\rightarrow C(X, \maL(H_1)_{*s})$ {{and another}} arbitrary $X$-extension with same ideal symbol $\{I_x\}_{x\in X}$, whose
extension algebra is $B\subseteq C(X, \maL(H)_{*s})$ for some infinite-dimensional separable Hilbert space $H$:
$$
0\rightarrow C(X, \maK(H))\hookrightarrow B\xrightarrow {\sigma} A\rightarrow 0,
$$
 there exists a sequence of operators $\{S_n\}_{n\in \N}$,
$S_n\in C(X, \maL(H_1, H)_{*s})$ for all $n\in \N$, such that we have for all $b\in B$:
\begin{enumerate}
 \item {$S_n\mu_1(\sigma(b)) - b S_n \in C(X, \maK(H_1))$ for any $n\in \N$,}
 \item {$\lim_{n\rightarrow \infty} || S_n\mu_1(\sigma(b)) - b S_n||=0$}, and
 \item $S_n^*S_n= \id_{H_1\otimes C(X)}$ for all $n\in \N$.
\end{enumerate}}}
\end{corollary}

\begin{proof}
 This is an immediate application of Theorem \ref{PPVseq}, by reducing to the case $H_1=H$ via a unitary isomorphism $u:H_1\rightarrow H$.
\end{proof}

\begin{corollary}\label{PPVunitaryseq}
Given a trivial $X$-extension by $A$ with exact lsc ideal symbol $\{I_x\}_{x\in X}$:
$$
0\rightarrow C(X, \maK(H))\hookrightarrow B_1\xrightarrow {\sigma_1} A\rightarrow 0
$$
which is implemented by a unital $*$-homomophism $\mu_1: A\rightarrow C(X, \maL(H)_{*s})$, {{and another}} trivial
 $X$-extension with the same ideal symbol $\{I_x\}_{x\in X}$, {implemented similarly by $\mu$ and} whose
extension algebra is $B\subseteq C(X, \maL(H)_{*s})$:
$$
0\rightarrow C(X, \maK(H))\hookrightarrow B\xrightarrow {\sigma} A\rightarrow 0,
$$
{there exists for any $\ep >0$ and any compact subset $F$ in $A$ such that $F=F^*$, $1\in F$ a \emph{unitary} operator $S_\ep \in C(X, \maL(H)_{*s})$ such that we have
\begin{enumerate}
 \item $\mu_1(a) - (S_\ep)^* \mu (a)  S_\ep \in C(X, \maK(H))$ for any  $a\in A$;
 \item The family $S=(S_\ep)$ satisfies that for any $a\in F$, $\mu_1(a) - (S_\ep)^* \mu (a)  S_\ep \stackrel{\epsilon}{\sim} 0$.
\end{enumerate}}

\end{corollary}

\begin{proof}
This is done by the usual PPV trick to pass from isometries to unitaries, as in Theorem 2.10 in \cite{PPV}.
{Let us give some details for the convenience of the reader. Starting with any trivial $X$-extension, one obtains a new trivial $X$-extension with same ideal symbol, by amplifying as follows. Consider for instance the trivial $X$-extension given by $\sigma_1$, and set $H':=H\otimes \ell^2\N$ and the $X$-extension associated with the unital $*$-monomorphism
$$
\mu'_1: A\rightarrow C(X, \maL (H')_{*s})\text{ given by } \mu'_1 (a):=\mu_1 (a)\otimes \id_{\ell^2\N}.
$$
So, the $X$-extension is $0\rightarrow C(X, \maK(H'))\hookrightarrow B'_1\xrightarrow {\sigma'_1} A\rightarrow 0$ with $B'_1=\mu'_1(A) + C(X, \maK(H'))$. Applying the previous corollary, we deduce the existence of a sequence $(S^0_n)_{n\in \N}$ of isometries in $C(X, \maL(H', H)_{*s})$ such that
$$
S^0_n \mu'_1 (\sigma (b)) - b S^0_n \in C(X, \maK (H', H)) \text{ and } \lim_{n\to \infty}  ||S^0_n  \mu'_1(\sigma(b)) - b S^0_n||=0.
$$
As in \cite{PPV}[page 71], we then construct out of each isometry $S^0_n$ a unitary $U^0_n\in C(X, \maL(H, H {\oplus} H)_{*s})$ which intertwines the original $X$-extension given by $B$ and its direct sum with the one given by $B_1$. More precisely, we consider the shift isometry $V$ on $H'$ that we view as constant in the $X$-variable, and similarly  the projection $P:H'\to H$ onto  the first component seen as a constant co-isometry. It is an obvious observation that $PV=0$, hence also $V^*P^*=0$, and  we can define following again \cite{PPV} the unitaries $(U^0_n\in C(X, \maL (H, H\oplus H)_{*s}))_{n\in \N}$ by setting
$$
U^0_n := \left(\begin{matrix} \id - S^0_n (S^0_n)^* + S^0_n {V^*} (S^0_n)^*  \\ P (S^0_n)^* \end{matrix}\right).
$$
A straightforward computation using the properties of the family $(S^0_n)_n$, shows that
$$
 \left(\begin{matrix} b & 0 \\ 0 & \mu_1 (\sigma (b))\end{matrix} \right) U^0_n - U^0_n b \in C(X, \maK (H, H\oplus H)),\quad \forall b\in B.
$$
and that  $\left\vert\left\vert \left(\begin{matrix} b & 0 \\ 0 & \mu_1 (\sigma (b))\end{matrix} \right) U^0_n - U^0_n b\right\vert\right\vert \to 0$. }

{Applying the same construction  using now that the original $X$-extension given by $B$ is also $X$-trivial, one deduces a sequence $(U^1_n)_{n\in \N}$ of unitaires in $C(X, \maL(H, H\oplus H)_{*s})$ such that  (applying as well the unitary interchanging the two copies of $H$):
$$
 \left(\begin{matrix} \mu (\sigma_1 (b_1)) & 0 \\ 0 & b_1\end{matrix} \right) U^1_n - U^1_n b_1 \in C(X, \maK (H, H\oplus H)),\quad \forall b_1\in B_1.
$$
and that  $\left\vert\left\vert \left(\begin{matrix} \mu (\sigma_1 (b_1)) & 0 \\ 0 & b_1\end{matrix} \right) U^1_n - U^1_n b_1\right\vert\right\vert \to 0$. {Applying the above properties to $b=\mu (a)$ and $b_1=\mu_1(a)$ for $a\in A$, we get
$$
U_n^0 \mu(a) (U_n^0)^* - U_n^1 \mu_1(a) (U_n^1)^* \in C(X, \maK(H))\text{ and } \vert\vert U_n^0 \mu(a) (U_n^0)^* - U_n^1 \mu_1(a) (U_n^1)^*\vert\vert \to 0.
$$
Hence the conclusion using the unitaries $S_n:=(U^0_n)^* U^1_n$. It is then easy to check that the argument gives the same estimates over the compact subspace $F$.}
}
\end{proof}

\bigskip

\end{document}